\definecolor{ForestGreen}{rgb}{0.1,0.6,0.05}
\definecolor{EgyptBlue}{rgb}{0.063,0.1,0.6}
\numberwithin{equation}{section}
\DeclarePairedDelimiter\abs{\lvert}{\rvert}%
\DeclarePairedDelimiter\norm{\lVert}{\rVert}%
\let\oldabs\abs
\def\abs{\@ifstar{\oldabs}{\oldabs*}}
\let\oldnorm\norm
\def\norm{\@ifstar{\oldnorm}{\oldnorm*}}
\newcommand{\be} {\beta}
\newcommand{\Dep} {\Delta_p}
\newcommand{\Ga} {\Gamma}
\newcommand{\om} {\omega}
\newcommand{\Om} {\Omega}
\newcommand{\la} {\lambda}
\newcommand{\si} {\sigma}
\newcommand{\noi} {\noindent}
\newcommand{\var} {\varepsilon}
\newcommand{\ra} {\rightarrow}
\newcommand{\wra} {\rightharpoonup}
\newcommand{\wrastar} {\overset{\ast}{\rightharpoonup}}
\DeclareMathAlphabet{\mathpzc}{T1}{pzc}{m}{it}
\newtheorem{thm}{Theorem}
\newtheorem{definition}{Definition}
\newtheorem{proposition}{Proposition}
\newtheorem{corollary}{Corollary}
\newtheorem{rmk}{Remark}
\newtheorem{lem}{Lemma}
\def\w{{\widetilde w}}
\def\M{{\mathcal{M}(\Om)}}
\def\dr{{\rm d}r}
\def\cp{{\rm Cap}_p}
\def\Dp{{{\mathcal D}^{1,p}_0(\Om)}}
\def\w2r{{{ W}^{2,2}(\R^N)}}
\def\d2{{{\mathcal D}^{2,2}_0(\Om)}}
\def\cset{{\subset \subset }}
\def\A{{\mathcal A}}
\def\C{{\mathcal C}}
\def\D{{\mathcal D}}
\def\E{{\mathcal E}}
\def\H{{\mathcal{H}(\Om)}}
\def\R{{\mathbb R}}
\def\N{{\mathbb N}}
\def\F{{\mathcal F}}
\def\({{\Big(}}
\def\){{\Big)}}
\def\ws2{{\F_{\frac{N}{2}}}}
\def\L2{{ L^{1,\;\infty}(\log L)^2}}
\def\l2{\mathcal M\log L}
\def\c1Loc{{\C_{loc}^1}}
\title{The compactness and the concentration compactness via $p$-capacity}
\author{T. V. Anoop \,, Ujjal Das\thanks{Corresponding Author}}
\date{}
\begin{document}
\maketitle 

\begin{abstract}
	For  $p \in (1,N)$ and  $\Omega\subseteq \mathbb{R}^N$ open,  the Beppo-Levi space $\mathcal{D}^{1,p}_0(\Omega)$ is the completion of  $C_c^{\infty}(\Omega)$ with respect to the norm $\left[ \int_{\Omega}|\nabla u|^p \ dx \right]^ \frac{1}{p}.$  Using the $p$-capacity, we define a norm  and then  identify the Banach function space $\mathcal{H}(\Omega)$  with  the set of all  $g$ in $L^1_{loc}(\Omega)$ that admits the following Hardy-Sobolev type inequality: 
	\begin{eqnarray*}
		\int_{\Omega} |g| |u|^p \ dx \leq C  \int_{\Omega} |\nabla u|^p \ dx, \forall\; u \in \mathcal{D}^{1,p}_0(\Omega),
	\end{eqnarray*}
for some $C>0.$ Further, we characterize the set of all $g$ in  $\mathcal{H}(\Omega)$  for which the map $G(u)= \displaystyle \int_{\Omega} g |u|^p \ dx$ is compact on $\mathcal{D}^{1,p}_0(\Omega)$. We use a variation of the concentration compactness lemma to give  a sufficient condition on $g\in \mathcal{H}(\Omega)$  so that the  best constant in the above inequality is attained in $\mathcal{D}^{1,p}_0(\Omega)$. 
\end{abstract}

\noindent \textbf{Mathematics Subject Classification (2020)}:  28A12, 28A33, 35A23, 35J20, 46E30, 46E35.
\\
\textbf{Keywords:} Hardy-Sobolev inequality, concentration compactness, $p$-capacity, eigenvalue problem for $p$-Laplacian,  absolute continuous norm, embedding of $\mathcal{D}^{1,p}_0(\Omega)$.
\section{Introduction}
 For  $p \in (1,N)$ and an open subset $\Om$  of $\R^N$, the Beppo-Levi space $\Dp$ is the completion of  $C_c^{\infty}(\Om)$ with respect to the norm, $ \norm{u}_{\D} :=\left[  \int_{\Omega}|\nabla u|^p \ dx \right]^ \frac{1}{p}.$ 
We look for the weight function  $g\in L^1_{loc}(\Om)$ that admits the following Hardy-Sobolev type inequality: 
 \begin{eqnarray}\label{HS}
  \int_{\Omega} g |u|^p \ dx \leq C  \int_{\Omega} |\nabla u|^p \ dx, \forall\; u \in \Dp,
 \end{eqnarray}
for some $C>0.$
\begin{definition}
 A function $g\in L^1_{loc}(\Om)$ is called a Hardy potentials if $|g|$ satisfies \eqref{HS}. We denote the space of Hardy potentials by $\H$. 
\end{definition} 
 
 Using Poincar\'{e} inequality, it is easy to verify that $L^{\infty}(\Om)\subseteq \H$ if $\Om$ is bounded (in one direction).  Further, the classical Hardy-Sobolev inequality
  \begin{equation} \label{CHS}
  \int_{\Om} \frac{1}{|x|^p} |u|^p \ dx \leq \displaystyle \left(\frac{p}{N-p} \right)^p \int_{\Om} |\nabla u|^p \ dx, \ u \in \Dp
  \end{equation}
  ensures that $\frac{1}{|x|^p}\in \H,$ even when $\Om$ contains the origin.  In the context of improving the Hardy-Sobolev inequality many examples of  Hardy potentials were produced,
  see \cite{MR1605678,Adimurthy_Mythily,Filippas} and the references there in. For $p=2$ and $\Om$ bounded,    $L^r(\Om)\subseteq \H$ with $r>\frac{N}{2}$  \cite{Manes-Micheletti}, $r=\frac{N}{2}$ \cite{Allegretto}. 
  For  $p\in (1,\infty)$ and for general domain $\Om,$ in \cite{Visciglia} authors showed that $L^{\frac{N}{p},\infty}(\Om)\subseteq \H$ using the Lorentz-Sobolev embedding. If $\Om$ is the exterior of closed unit ball, 
  then examples of Hardy potentials outside the $L^{\frac{N}{p},\infty}(\Om)$ are provided in \cite{ADS-exterior}. For $g\in L^1_{loc}(\Om)$, we consider \[\tilde{g}(r)= {\rm ess}\sup \{|g(y)|: |y|=r \}, \ r > 0,\]
  where the essential supremum is taken with respect to $(N-1)$ dimensional surface measure. 
  Let \[I(\Om)= \{ g\in L^1_{loc}(\Om) : \tilde{g} \in L^{1}((0, \infty),r^{p-1}\dr) \};\qquad   \norm{g}_I = \int_0^{\infty} r^{p-1} |\tilde{g}|(r) dr.\] 
  Then,  $I(\Om)$ is a Banach space with the norm $\norm{.}_I$ and $I(\Om)\subseteq \H$ (Proposition \ref{Iomega}).
  
   In \cite{Mazya}, Maz'ya gave a very intrinsic characterization of a Hardy potential using the $p$-capacity (see Section 2.4.1, page 128).
 Recall that, for $ F \cset \Omega,$ the $p$-capacity of $F$ relative to $\Om$ is defined as,
  \[{\cp(F,\Om)}  = \inf \left\{ \displaystyle \int_{\Omega} | \nabla u |^p \ dx : u \in  \mathcal{N} (F) \right \},\]
  where $ \mathcal{N} (F)= \{ u \in \Dp : u \geq 1 \ \mbox{in a neighbourhood of}\; F \}$. Thus for $g \in \H$ and $ w \in  \mathcal{N} (F)$, we have 
  \[\displaystyle \int_{F} |g| \ dx \leq  \int_{\Omega} |g||w|^p \ dx \leq C \int_{\Omega} |\nabla w|^p \ dx.\]
  Now by taking the infimum over $\mathcal{N}(F) $ and as $F$ is arbitrary, we get a necessary condition:
  \begin{eqnarray*}\label{norm}
   \displaystyle \sup_{ F \cset \Omega} \frac{\displaystyle \int_{F} |g| \ dx}{\cp(F,\Omega)}  \leq  C .
  \end{eqnarray*}
  Maz'ya proved that the above condition is also sufficient for $g$ to be in $\H.$ Motivated by this, for $g \in L^1_{loc}(\Om)$, we define, 
  \begin{eqnarray*}
  \norm{g}= \sup\left\{ \frac{\displaystyle \int_{F} |g| \ dx}{\cp(F,\Om)}:F \cset \Om; |F|\ne 0 \right\}.                           
   \end{eqnarray*}  
 One can verify that $\norm{.}$ is a Banach function norm on $\H $.  The Banach function space structure of $\H$ and Maz'ya's characterization helps  us to  prove an  embedding of $\Dp$ which is finer than the Lorentz-Sobolev 
 embedding  proved in \cite{Alvino}.  We also provide an alternate proof for the Lorentz-Sobolev embedding (Proposition \ref{embedding}). 
 
  For $g\in \H,$ let $B_g$ be the best constant  in \eqref{HS}.
In this article, we are  interested to find the Hardy potentials $g \in \H$ for which $B_g$ is attained in $\Dp$. 
Many authors have considered similar problems in the context of finding the first (least)  positive  eigenvalue for  the following weighted  eigenvalue problem:
  \begin{eqnarray} \label{EVP}
 -\Dep u & = & \la g |u|^{p-2}u \  \mbox{ on } \Dp. 
 \end{eqnarray}
 If the map  $G: \Dp \ra \R $ defined by  $G(u)= \displaystyle \int_{\Om} g|u|^p \ dx$ is compact, then a direct variational method ensures that the first positive eigenvalue for the above problem exists and 
 $B_g$  is attained in $\Dp$. For $p=2$ and $\Om$ bounded, the compactness of $G$ is proved for  $g\in L^r(\Om)$ with $r>\frac{N}{2}$ 
 in \cite{Manes-Micheletti} and $r=\frac{N}{2}$ in \cite{Allegretto}. For $p\in (1,\infty)$ and for general domain $\Om,$ $g\in L^{\frac{N}{p},d}(\Om)$ with $d<\infty$, in \cite{Visciglia}. 
 The result is extended for a larger space $\F_{\frac{N}{p}}(\Om):=\overline{C_c^{\infty}(\Om)}$ in $L^{\frac{N}{p},\infty}(\Om)$ in \cite{AMM} for $p=2$ and in \cite{anoop-p} for $p\in (1,N)$. 
In \cite{ADS-exterior}, authors obtained the compactness of $G$ for $g\in I(\overline{B}_1^c).$ 
 
 We extend and unify all the existing sufficient conditions for the compactness of $G$ by characterizing the set of all Hardy potentials 
 for which the map $G$  is compact on $\Dp$. In fact, we provide three different characterizations and each of them uses the Banach function space structure of $\H$ in one way or other.  
 Our first characterization is motivated by the definition of the space $\F_{\frac{N}{p}}(\Om)$ considered in \cite{AMM,anoop-p}. Here, we consider the following subspace of $\H$:
\[\F(\Om): =  \overline{C_c^{\infty}(\Om)} \text{ in }  \H.\]
  Now, we  have the following  theorem:
\begin{thm}\label{eqivthm}
 Let $g\in \H$. Then  $G:\Dp \ra \R$ is compact if and only if $g \in \F(\Om)$. 
\end{thm}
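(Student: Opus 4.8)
The plan is to prove the two implications separately, with essentially all the work in the necessity of $g\in\F(\Om)$. Throughout I use Maz'ya's characterization in the quantitative form it provides: there is a constant $\Lambda=\Lambda(N,p)>0$ with
\[\int_{\Om}|g|\,|u|^p\dx\le\Lambda\,\norm{g}\,\norm{u}_{\D}^p,\qquad u\in\Dp.\]

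\textbf{Sufficiency} ($g\in\F(\Om)\Rightarrow G$ compact). I show that $u_n\wra u$ in $\Dp$ forces $G(u_n)\to G(u)$. Fix $\eps>0$ and, since $g\in\overline{C_c^\infty(\Om)}$ in $\H$, pick $\phi\in C_c^\infty(\Om)$ with $\norm{g-\phi}<\eps$. Split $G(u_n)-G(u)=\int_\Om(g-\phi)(|u_n|^p-|u|^p)\dx+\int_\Om\phi(|u_n|^p-|u|^p)\dx$. The first integral is at most $\Lambda\norm{g-\phi}\big(\norm{u_n}_\D^p+\norm{u}_\D^p\big)\le 2\Lambda\eps\sup_n\norm{u_n}_\D^p$ by the displayed inequality applied to $g-\phi$, and the supremum is finite because weakly convergent sequences are bounded. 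For the second integral, $\phi$ has compact support $K$; the embedding $\Dp\embd L^p(K)$ is compact by Rellich--Kondrachov, so $u_n\to u$ in $L^p(K)$, and the elementary bound $\big||u_n|^p-|u|^p\big|\le p|u_n-u|(|u_n|+|u|)^{p-1}$ with H\"older gives $|u_n|^p\to|u|^p$ in $L^1(K)$; boundedness of $\phi$ then sends this term to $0$. Letting $\eps\to0$ closes the direction.

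\textbf{Necessity} ($G$ compact $\Rightarrow g\in\F(\Om)$). I argue by contraposition: assuming $g\notin\F(\Om)$, I will build $v_n$ with $\norm{v_n}_\D=1$, $v_n\wra0$ in $\Dp$, yet $G(v_n)\not\to0$, contradicting compactness. The first step is to read $g\notin\overline{C_c^\infty(\Om)}$ as a failure of absolute continuity of the Banach function norm $\norm{\cdot}$ at $g$, producing $\delta>0$ and sets $F_n\cset\Om$ that either shrink to a point or escape to infinity with $\int_{F_n}|g|\dx\ge\delta\,\cp(F_n,\Om)$ for all $n$; this is where the capacitary structure of $\norm{\cdot}$ and the identification of $\F(\Om)$ with the absolutely continuous part of $\H$ are used. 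Replacing $F_n$ by $F_n\cap\{g\ge0\}$ or $F_n\cap\{g\le0\}$, whichever carries at least half of $\int_{F_n}|g|$, and using monotonicity of $\cp(\cdot,\Om)$, I may assume $g$ has a fixed sign on $F_n$ and $\big|\int_{F_n}g\,\dx\big|\ge\tfrac{\delta}{2}\,\cp(F_n,\Om)$. Now choose $w_n\in\mathcal{N}(F_n)$ with $\int_\Om|\Gr w_n|^p\dx\le\cp(F_n,\Om)+\tfrac1n$, and set $v_n=w_n/\norm{w_n}_\D$, so that $\norm{v_n}_\D=1$ and, since $|w_n|\ge1$ on $F_n$,
\[\Big|\!\int_\Om g\,|v_n|^p\dx\Big|\ge\frac{\big|\int_{F_n}g\,\dx\big|}{\norm{w_n}_\D^p}-\Big|\!\int_{\Om\setminus F_n}g\,|v_n|^p\dx\Big|\ge\frac{\delta}{4}-R_n\quad\text{for large }n.\]
Because the normalized capacitary potentials concentrate near, or escape with, the condensers $F_n$, one gets $v_n\wra0$, while the paper's concentration compactness lemma localizes the mass of $|v_n|^p$ and forces the remainder $R_n=\big|\int_{\Om\setminus F_n}g|v_n|^p\dx\big|\to0$. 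Hence $\liminf_n|G(v_n)|\ge\delta/4>0$ with $v_n\wra0$, contradicting compactness.

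I expect the genuine difficulty to lie entirely in the necessity direction, and within it in the two intertwined points just flagged: first, converting the abstract statement $g\notin\overline{C_c^\infty(\Om)}$ into a concrete, sign-controllable family of condensers $F_n$ carrying a uniform lower bound on $\int_{F_n}|g|/\cp(F_n,\Om)$; and second, showing that the normalized capacitary potentials are weakly null and that the off-$F_n$ remainder $R_n$ vanishes. Both rest on the concentration compactness lemma, since the obstruction encoded by $g\notin\F(\Om)$ may be either a concentration at a point of positive capacitary mass or an escape of mass to infinity, and these two modes must be excluded uniformly.
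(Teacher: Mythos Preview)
Your sufficiency argument is correct and more direct than the paper's route, which passes through the concentration function ($g\in\F(\Om)\Rightarrow\C_g^*=\C_g(\infty)=0\Rightarrow G$ compact, the last implication via the concentration compactness Lemma~\ref{mlc2}).

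The necessity argument has a genuine gap at the step ``read $g\notin\F(\Om)$ as producing $F_n$ that either shrink to a point or escape to infinity.'' Even granting the identification of $\F(\Om)$ with the absolutely continuous part of $\H$, failure of absolute continuity only yields measurable $A_n$ with $\chi_{A_n}\to0$ a.e.\ and $\norm{g\chi_{A_n}}\ge\delta$; such $A_n$ can be arbitrary (e.g.\ unions of many small balls scattered through $\Om$), and the near-extremal condensers $F_n$ extracted from them need neither shrink to a point nor run off to infinity. You are tacitly invoking the equivalence with condition $(iv)$ of Theorem~\ref{allinone}, which is part of what is being proved. Without the shrink/escape dichotomy you have no mechanism forcing $v_n\wra0$: the normalized capacitary potentials could, for instance, all coincide with a fixed function. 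Your appeal to the concentration compactness lemma to kill $R_n=\big|\int_{\Om\setminus F_n}g|v_n|^p\dx\big|$ is also circular: that lemma analyzes the limit measures of a sequence \emph{already known} to converge weakly; it neither supplies weak convergence nor controls off-condenser mass.

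The paper's necessity proof is quite different and uses no concentration compactness. After reducing to $|g|$ via Proposition~\ref{Gcpct} (which sidesteps your sign-splitting entirely), Lemma~\ref{cpct} proceeds as follows. From $\norm{g\chi_{A_n}}\ge a$ with $A_n$ bounded and $\chi_{A_n}\downarrow0$ one obtains $u_n\in\mathcal N(F_n)$ with $\norm{u_n}_\D^p<\tfrac1a\int_{F_n\cap A_n}|g|\to0$, so $u_n\to0$ \emph{strongly} in $\Dp$. Normalizing $u_n$ gives no weak nullity, so instead one sets
\[
w_n^\eps=\frac{|u_n|^p}{(|u_n|+\eps)^{p-1}\norm{u_n}_\D},
\]
which is bounded in $\Dp$ and satisfies $\norm{w_n^\eps}_{p^*/p}\le C\eps^{1-p}\norm{u_n}_\D^{p-1}\to0$, hence $w_n^\eps\wra0$; yet on $\{|u_n|\ge1\}$ one checks $\int_\Om|g||w_n^\eps|^p\ge 2^{p-p^2}a>0$, contradicting compactness of $|G|$. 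This nonlinear rescaling, not the concentration compactness lemma, is the missing idea.
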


 For the second characterization, we use the notion of the absolute continuous norm on a Banach function space.
 
\begin{definition} \label{ABnorm}
  Let $X = (X(\Om),\norm{.}_X )$ be a Banach function space. A function $f\in X$ is said to have absolute continuous norm,  if for any sequence of measurable subsets $(A_n)$ of $\Om$
  with $ \chi_{A_n} $ converges to $ 0  $ a.e. on $\Om,$ then $\norm{f \chi_{A_n}}_X$ converges to $0$.
 \end{definition}
 \begin{thm} \label{eqivthm1}
   Let $g\in \H$. Then $G:\Dp \ra \R$ is compact if and only if $g$ has absolute continuous norm in $\H$.
 \end{thm}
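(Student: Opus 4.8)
The plan is to reduce the statement to Theorem \ref{eqivthm}. Since that theorem already characterises compactness of $G$ as $g\in\F(\Om)=\overline{C_c^\infty(\Om)}$ in $\H$, it is enough to prove that \emph{$g$ has absolutely continuous norm in $\H$ if and only if $g\in\F(\Om)$.} Denote by $\mathcal{H}_a$ the set of all $g\in\H$ with absolutely continuous norm. First I would record that $\mathcal{H}_a$ is a closed linear subspace of $\H$: if $g_k\to g$ in $\H$ with $g_k\in\mathcal{H}_a$, and $\chi_{A_n}\to 0$ a.e., then the lattice property $\norm{f\chi_A}\le\norm{f}$ of the Banach function norm gives $\norm{g\chi_{A_n}}\le\norm{g-g_k}+\norm{g_k\chi_{A_n}}$, and choosing first $k$ and then $n$ large makes the right-hand side arbitrarily small.

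For the inclusion $\F(\Om)\subseteq\mathcal{H}_a$ it suffices, by closedness, to show $C_c^\infty(\Om)\subseteq\mathcal{H}_a$. The single analytic input is the isocapacitary inequality $\cp(F,\Om)\ge\cp(F,\R^N)\ge c_N\abs{F}^{\frac{N-p}{N}}$, valid for all $F\cset\Om$. It gives, for any measurable $A$,
\[\norm{\chi_A}=\sup_{F\cset\Om}\frac{\abs{F\cap A}}{\cp(F,\Om)}\le c_N^{-1}\sup_{F\cset\Om}\abs{F\cap A}^{\frac pN}\le c_N^{-1}\abs{A}^{\frac pN}.\]
Hence, for $\phi\in C_c^\infty(\Om)$ with $\abs{\phi}\le M$ and $K:=\mathrm{supp}\,\phi$, whenever $\chi_{A_n}\to 0$ a.e.\ we get $\norm{\phi\chi_{A_n}}\le M\norm{\chi_{A_n\cap K}}\le Mc_N^{-1}\abs{A_n\cap K}^{p/N}\to 0$, since $\abs{A_n\cap K}\to 0$ by dominated convergence on the finite-measure set $K$. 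Thus $\phi\in\mathcal{H}_a$.

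For the reverse inclusion $\mathcal{H}_a\subseteq\F(\Om)$, take $g\in\mathcal{H}_a$, fix an increasing exhaustion of $\Om$ by compacts $K_m\cset\Om$ with $\bigcup_m K_m=\Om$, and set $g_m:=g\,\chi_{K_m\cap\{\abs{g}\le m\}}$. As $\chi_{\Om\setminus(K_m\cap\{\abs{g}\le m\})}\to 0$ a.e., absolute continuity yields $\norm{g-g_m}\to 0$, so it remains to approximate each bounded, compactly supported $g_m$ by $C_c^\infty(\Om)$ functions in $\H$. Here I would use the \emph{local embedding}: for $h$ supported in a fixed compact $K\cset\Om$ and any $r>\frac Np$, H\"older's inequality (with $\tfrac1r+\tfrac1{r'}=1$) and the isocapacitary inequality give
\[\frac{\int_F\abs{h}\dx}{\cp(F,\Om)}\le c_N^{-1}\,\norm{h}_{L^r}\,\abs{F\cap K}^{\frac1{r'}-\frac{N-p}N}\le c_N^{-1}\abs{K}^{\frac1{r'}-\frac{N-p}N}\norm{h}_{L^r},\]
the exponent being nonnegative precisely because $r>\frac Np$; hence $\norm{h}\le C(K,r)\norm{h}_{L^r}$. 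Choosing $\delta<\mathrm{dist}(K_m,\pa\Om)$, the mollifications $\phi_\delta:=g_m*\rho_\delta$ lie in $C_c^\infty(\Om)$, are supported in a fixed compact set, and converge to $g_m$ in $L^r$; the local embedding then gives $\norm{g_m-\phi_\delta}\to 0$, so $g_m\in\F(\Om)$ and $g=\lim_m g_m\in\F(\Om)$.

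The main obstacle is this reverse inclusion, and within it the step from bounded, compactly supported functions to smooth ones: the $\H$-norm is not controlled by an $L^\infty$-bound, so mollification need not converge in $\H$, and the local embedding $\norm{h}\le C(K,r)\norm{h}_{L^r}$ with finite $r>\frac Np$ is exactly what rescues the argument, since mollification does converge in every $L^r$ with $r<\infty$. The remaining points are bookkeeping---using the exhaustion $(K_m)$ to keep all supports compactly inside $\Om$ so that the mollifications genuinely belong to $C_c^\infty(\Om)$---and noting that the isocapacitary (Maz'ya--Sobolev) inequality is the single ingredient driving both directions.
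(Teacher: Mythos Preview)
Your argument is correct, but it follows a different route from the paper. The paper does not prove Theorem~\ref{eqivthm1} in isolation; instead it establishes the cycle $(i)\Rightarrow(ii)\Rightarrow(iii)\Rightarrow(iv)\Rightarrow(i)$ in Theorem~\ref{allinone}, where $(i)$ is compactness of $G$, $(ii)$ is absolute continuity of the norm, $(iii)$ is membership in $\F(\Om)$, and $(iv)$ is vanishing of $\C_g$. Your reduction to Theorem~\ref{eqivthm} amounts to proving $(ii)\Leftrightarrow(iii)$ directly. Your direction $\mathcal{H}_a\subseteq\F(\Om)$ is essentially the paper's step $(ii)\Rightarrow(iii)$: the paper also truncates via $g\chi_{\{|g|\le n\}\cap B_m}$ and then invokes Lemma~\ref{charF}, which in turn uses the embedding $L^{N/p}(\Om)\hookrightarrow\H$ where you use $L^r$, $r>N/p$, on a fixed compact support---the same mechanism. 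The genuine difference is in the other direction: for $\F(\Om)\subseteq\mathcal{H}_a$ you give an elementary argument from the isocapacitary inequality $\cp(F,\Om)\ge c_N|F|^{(N-p)/N}$ alone, whereas the paper reaches $(iii)\Rightarrow(ii)$ only by the long detour $(iii)\Rightarrow(iv)\Rightarrow(i)\Rightarrow(ii)$, passing through the concentration--compactness Lemma~\ref{mlc2} and the rather technical Lemma~\ref{cpct}. So your proof of $(ii)\Leftrightarrow(iii)$ is self-contained and more elementary; the price is that you still invoke Theorem~\ref{eqivthm} as a black box, and in the paper that theorem is itself proved via the same cycle, so globally you are not avoiding the concentration--compactness machinery---you are only showing that the particular implication ``$\F(\Om)\Rightarrow$ absolutely continuous norm'' does not require it.
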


 The third characterization is based on a concentration function that is defined using the norm on $\H$.  For $x \in \overline{\Om} $ and $r> 0$, 
 let $B_r(x)$ be the ball of radius $r$  centered at $x$. Now for $g\in \H$, we  define, 
 \begin{eqnarray*}
  \C_g(x)= \lim_{r \ra 0}  \norm{g \chi_{ B_r(x)}}, \qquad 
  \C_g({\infty}) =\lim_{R \ra \infty} \norm{g \chi_{ {B}_R(0)^c}}. 
 \end{eqnarray*}
 Observe that, the concentration function $\C_g$ measures the lack of absolute continuity of the norm of $g$ at all the points in  $\Om$ and at the infinity. Therefore, if   $\C_g$ vanishes everywhere, then  naturally one may anticipate 
 the compactness of $G,$ and precisely this is our next result.
  \begin{thm} \label{eqivthm2}
   Let $g\in \H$. Then $G:\Dp \ra \R$ is compact if and only if \[ \C_{g}(x)=0,\;\forall x\in \overline{\Om}\cup\{\infty\}.\]
 \end{thm}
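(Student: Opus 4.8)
The plan is to establish the two implications separately, using the absolute continuity characterization of Theorem \ref{eqivthm1} for the easy direction and a direct concentration compactness argument for the hard one. Recall that compactness of $G$ means $u_n\wra u$ in $\Dp$ implies $G(u_n)\to G(u)$. The single abstract tool I would isolate at the outset is the \emph{localized} form of the Hardy--Sobolev inequality \eqref{HS}: for every measurable $E\subseteq\Om$ the weight $|g|\chi_E$ again lies in $\H$ with $\norm{|g|\chi_E}=\norm{g\chi_E}$, so Maz'ya's characterization yields a constant $C$ with $\int_\Om |g|\chi_E|v|^p\dx\le C\,\norm{g\chi_E}\int_\Om|\nabla v|^p\dx$ for all $v\in\Dp$. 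This is the mechanism that converts smallness of the concentration function into smallness of weighted mass.

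For the direction ``$G$ compact $\Rightarrow\C_g\equiv 0$'' I would argue by contraposition and invoke Theorem \ref{eqivthm1}. If $\C_g(x_0)>0$ for some $x_0\in\overline\Om$, take $r_n\downarrow 0$: then $\chi_{B_{r_n}(x_0)}\to 0$ a.e. on $\Om$, whereas $\norm{g\chi_{B_{r_n}(x_0)}}\to\C_g(x_0)>0$ because $r\mapsto\norm{g\chi_{B_r(x_0)}}$ is nondecreasing. Thus $g$ fails to have absolutely continuous norm, and Theorem \ref{eqivthm1} gives that $G$ is not compact. The case $\C_g(\infty)>0$ is identical with $R_n\uparrow\infty$ and $\chi_{B_{R_n}(0)^c}\to 0$ a.e.

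For the substantial direction, $\C_g\equiv 0\Rightarrow G$ compact, I would argue directly with the functions rather than with abstract sets. Splitting $g=g^+-g^-$ and noting $\C_{g^{\pm}}\le\C_g=0$, it suffices to treat a nonnegative weight and show $\int_\Om g|u_n|^p\dx\to\int_\Om g|u|^p\dx$. Let $u_n\wra u$ in $\Dp$; by the local compact embedding of $\Dp$ into $L^p(B_R)$ we may assume $u_n\to u$ a.e.\ and in $L^p_{loc}$. Testing the localized inequality with $v=\varphi\,u_n$, where $\varphi\in C_c^\infty$ is a cut-off supported in a small ball $B_r(x_0)$, shows that the weighted mass accumulating near $x_0$ is bounded by a constant times $\norm{g\chi_{B_r(x_0)}}\to\C_g(x_0)=0$; taking instead $E=B_R(0)^c$ and using $\norm{g\chi_{B_R^c}}\to\C_g(\infty)=0$ controls the mass escaping to infinity.

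Assembling these estimates is where I expect the main obstacle to lie. A naive covering of a bounded region by small balls is useless, since $\norm{\,\cdot\,}$ is only subadditive and a cover by $M$ balls produces a spurious factor $M$. The resolution is that the energies $\int_\Om|\nabla u_n|^p\dx$ are bounded, so by Lions' concentration compactness principle the defect of $|\nabla u_n|^p\dx$ is carried by an at most countable set of atoms with summable masses, plus a possible mass at infinity; only finitely many atoms carry non-negligible mass. The localized inequality then bounds each atomic contribution to $\int_\Om g|u_n|^p\dx$ by $\C_g(x_j)$ times the corresponding energy atom, hence it vanishes, and the far-field term vanishes by $\C_g(\infty)=0$, while the bulk converges because $u_n\to u$ in $L^p_{loc}$ and $g\in L^1_{loc}(\Om)$. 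The delicate points to be verified are the Brezis--Lieb type splitting of $\int_\Om g|u_n|^p\dx$ into bulk, atomic and far-field parts, and the uniform-in-$n$ control of the atomic part through the localized inequality; once these are in place one obtains $G(u_n)\to G(u)$, and applying the argument to $g^+$ and $g^-$ completes the proof.
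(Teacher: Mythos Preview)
Your easy direction is fine and matches the paper's cycle of equivalences through absolute continuity. The hard direction, however, has a genuine gap. The claim that ``by Lions' concentration compactness principle the defect of $|\nabla u_n|^p\dx$ is carried by an at most countable set of atoms'' is false. Lions' lemma (cf.\ \eqref{lionsineq}--\eqref{lionseq} in the paper) forces a measure to be atomic only when the two sides of the reverse H\"older inequality carry \emph{different} exponents $1\le p<q$; here both sides have exponent $p$, so the inequality collapses to $\nu\le C\,\Gamma$ and gives no structural information on either measure. In fact $\Gamma$ is never asserted to be atomic even in Lions' original setting, and highly oscillatory sequences such as $u_n(x)=n^{-1}\sin(nx_1)\varphi(x)$ with $\varphi\in C_c^\infty$ produce a diffuse $\Gamma$. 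So the mechanism you invoke to kill the spurious factor $M$ simply does not exist, and your assembly step breaks down.

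What the paper does instead (Lemma~\ref{mlc1}) bypasses any atomic structure. From the localized inequality you already isolated, one gets $\nu(B_r(x))\le C_H\norm{g\chi_{B_r(x)}}\Gamma(B_r(x))$; hence $\nu\ll\Gamma$, and Lebesgue differentiation gives $\tfrac{d\nu}{d\Gamma}(x)\le C_H\,\C_g(x)$ pointwise. If $\C_g\equiv 0$ on $\overline{\Om}$ this forces $\nu=0$; the analogous estimate at infinity gives $\nu_\infty\le C_H\,\C_g(\infty)\,\Gamma_\infty=0$, and the Brezis--Lieb splitting (Lemma~\ref{mlc2}(ii)) then yields $\int_\Om|g||u_n|^p\dx\to\int_\Om|g||u|^p\dx$. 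Two further comments: your covering idea can in fact be salvaged via Besicovitch's covering theorem (bounded overlap controls the factor $M$), but the Radon--Nikodym route is cleaner; and your ``bulk'' step, asserting convergence from $u_n\to u$ in $L^p_{loc}$ together with $g\in L^1_{loc}$ alone, is unjustified---there is no domination available---and it is precisely the vanishing of $\nu$ on compacta that delivers convergence of the bulk term.
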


Observe that the  best constant in \eqref{HS} is attained  in $\Dp$ if and only if the following minimization problem (dual problem) has a minimizer: 
  \begin{eqnarray}\label{Min1}
  \min \left\{ \int_{\Om} |\nabla u|^p\ dx : \ u\in \Dp, \ \int_{\Om} g|u|^p\ dx =1 \right\}.
 \end{eqnarray}
If $G$ is compact, then  the level set $G^{-1}\{1\}$ is weakly closed  and hence the weak limit of a minimizing sequence lie in $G^{-1}\{1\}.$ Indeed, the weak limit of a minimizing sequence solves the minimization problem 
and $B_g$ is attained at this weak limit. However,  for the existence of the weak limit of a minimizing sequence, it is not necessary to have $G^{-1}\{1\}$ is  weakly closed. In other words, for a 
 non-compact $G$, \eqref{Min1} may admit a minimizer. These cases were treated in  \cite{Tertikas} for $p=2$, $\Om = \R^N$ and in \cite{Smets} for  $p \in (1,N)$ and general $\Om$.
 In \cite{Tertikas}{Smets} and
 \cite{Smets}, authors provided sufficient condition on $g$ for the existence of minimizer for \eqref{Min1}. In \cite{Tertikas}, Tertikas used the celebrated concentration compactness lemma of Lions
(\cite{Lions1a,Lions2a}) and Smets proved a variant of this lemma in \cite{Smets}. One of their main restrictions was the countability of the closure of the `singular' set of $g$ (see Remark \ref{singularset} for their definition of a singular set). 
In this article, we define the singular set as
$\sum_g = \{x \in \overline{\Om}: \C_g(x)>0\}$ 
and in fact, $\sum_g$ coincides with the singular set considered by Tertikas \cite{Tertikas} and Smets \cite{Smets}. In the next theorem, we provide a sufficient condition which is weaker than the countability assumptions of \cite{Smets,Tertikas} for the existence of minimizer for \eqref{Min1}.
 \begin{thm}\label{exismin}
 Let $g\in \H$ be a non-negative function such that $\left|\overline{\sum_g} \right|=0$ and  \[C_H\C_{g}(x)< B_g, \forall x \in \overline{\Om} \cup \{ \infty \} ,\] where $\left|\overline{\sum_g} \right|$ denotes the Lebesgue measure of $\overline{\sum_g}$,
 $ B_g $ is the best constant in \eqref{HS} and $C_H=p^p(p-1)^{1-p}$. Then  $B_g$ is attained on $\Dp.$
\end{thm}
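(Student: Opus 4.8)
The plan is to run a concentration-compactness argument on a minimizing sequence for the dual problem \eqref{Min1}. As observed just before the statement, $B_g$ is attained in $\Dp$ exactly when \eqref{Min1} has a minimizer, and by homogeneity the value of \eqref{Min1} equals $\frac{1}{B_g}$. So I start with a sequence $(u_n)\subset\Dp$ satisfying $\int_\Om g|u_n|^p\dx = 1$ and $\int_\Om |\Gr u_n|^p\dx \to \frac{1}{B_g}$. Being bounded in $\Dp$, after passing to a subsequence I may assume $u_n \wra u$ in $\Dp$. Viewing the gradient and weighted energies as nonnegative finite measures supported in $\overline\Om\cup\{\infty\}$ inside the one-point compactification of $\R^N$, I may further assume $\mu_n := |\Gr u_n|^p\dx \wrastar \mu$ and $\nu_n := g|u_n|^p\dx \wrastar \nu$ (here $g\ge 0$ makes $\nu_n$ genuinely nonnegative); testing against the constant function $1$ on the compact space gives $\mu(\overline\Om\cup\{\infty\}) = \frac{1}{B_g}$ and $\nu(\overline\Om\cup\{\infty\}) = 1$.

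The heart of the argument is the announced variant of the concentration-compactness lemma adapted to the Maz'ya norm $\norm{\cdot}$. I expect it to provide an at most countable set $\{x_j\}_{j\in J}\subseteq \overline{\sum_g}$, nonnegative weights $\mu_j,\nu_j$, and possible masses $\mu_\infty,\nu_\infty$ concentrated at infinity, such that
\begin{align*}
\nu &= g|u|^p\dx + \sum_{j\in J}\nu_j\,\delta_{x_j} + \nu_\infty\,\delta_\infty, \\
\mu &\ge |\Gr u|^p\dx + \sum_{j\in J}\mu_j\,\delta_{x_j} + \mu_\infty\,\delta_\infty,
\end{align*}
together with the localized estimates $\nu_j \le C_H\,\C_g(x_j)\,\mu_j$ for $j\in J$ and $\nu_\infty \le C_H\,\C_g(\infty)\,\mu_\infty$. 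This is exactly where $\left|\overline{\sum_g}\right|=0$ enters: since $g|u|^p\in L^1_{loc}(\Om)$ integrates to $0$ over the null set $\overline{\sum_g}$, the absolutely continuous part $g|u|^p\dx$ and the atomic part are mutually singular, so the displayed decomposition of $\nu$ is exact and the atoms carry no contribution from $u$.

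With these in hand the conclusion follows by an energy-balance computation. Applying \eqref{HS} to $u$ gives $\int_\Om g|u|^p\dx \le B_g\int_\Om|\Gr u|^p\dx$, while the strict hypothesis $C_H\C_g(x)<B_g$ upgrades the localized estimates to $\nu_j \le C_H\C_g(x_j)\mu_j \le B_g\mu_j$ and $\nu_\infty \le B_g\mu_\infty$. Summing and using the total masses computed above,
\[
1 = \int_\Om g|u|^p\dx + \sum_{j\in J}\nu_j + \nu_\infty \le B_g\Big(\int_\Om|\Gr u|^p\dx + \sum_{j\in J}\mu_j + \mu_\infty\Big) \le B_g\cdot\frac{1}{B_g}=1,
\]
so every inequality is in fact an equality. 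In particular $\nu_j = B_g\mu_j$ for each $j$; combining this with $\nu_j \le C_H\C_g(x_j)\mu_j$ and $C_H\C_g(x_j)<B_g$ forces $(B_g - C_H\C_g(x_j))\mu_j\le 0$, hence $\mu_j=\nu_j=0$, and identically $\mu_\infty=\nu_\infty=0$. Thus no mass escapes to the singular set or to infinity, leaving $\int_\Om g|u|^p\dx = 1$ and $\int_\Om|\Gr u|^p\dx = \frac{1}{B_g}$. Therefore $u\ne 0$ is admissible for \eqref{Min1} and realizes its minimum, which is precisely the assertion that $B_g$ is attained in $\Dp$.

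The main obstacle I anticipate is establishing the localized Maz'ya estimate $\nu_j \le C_H\C_g(x_j)\mu_j$ within the concentration-compactness lemma. This should be obtained by testing \eqref{HS} against $u_n$ cut off by functions concentrating at $x_j$ and passing to the limit, controlling the weight through the upper bound $B_{g\chi_{B_r(x_j)}}\le C_H\norm{g\chi_{B_r(x_j)}}$ supplied by Maz'ya's characterization, and then sending $r\to 0$ to produce the factor $\C_g(x_j)$; the parallel analysis with exterior cut-offs $\chi_{B_R(0)^c}$ yields the $\C_g(\infty)$ term. Verifying that the defect measure $\nu - g|u|^p\dx$ is purely atomic on $\overline{\sum_g}$, and that the decomposition is exact on this null set, is the remaining technical point I would need to settle carefully.
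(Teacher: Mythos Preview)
Your overall strategy---minimizing sequence, weak limits of the energy and weighted measures, localized Maz'ya bounds, and an energy-balance dichotomy---matches the paper's. But the specific structural hypothesis you rely on is exactly the one the paper is designed to \emph{avoid}, and it is not available under the stated assumptions.

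You expect the defect measure $\nu - g|u|^p\dx$ to decompose as a countable sum of Dirac masses on $\overline{\sum_g}$, and you flag this yourself as ``the remaining technical point.'' In fact this atomic representation is what Tertikas and Smets obtain only under the hypothesis that $\overline{\sum_g}$ is countable; the present theorem replaces that by the weaker condition $\bigl|\overline{\sum_g}\bigr|=0$, and the paper exhibits cylindrical Hardy potentials with $\sum_g=\{0\}\times\R^{N-k}$, uncountable. For such $g$ there is no reason the defect measure should be purely atomic, so your decomposition cannot be justified and the chain of equalities built on the point masses $\nu_j,\mu_j$ collapses.

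The paper sidesteps atoms entirely. Working with the \emph{difference} measures $\nu_n=g|u_n-u|^p\dx$ and $\Gamma_n=|\nabla(u_n-u)|^p\dx$ (and using Brezis--Lieb to recover the full energies), it shows via Radon--Nikodym and Lebesgue differentiation that the Radon--Nikodym derivative satisfies $\frac{d\nu}{d\Gamma}(x)\le C_H\C_g(x)$ pointwise, hence $\nu(E)\le C_H\C_g^*\,\Gamma(E)$ for every Borel $E$. The hypothesis $\bigl|\overline{\sum_g}\bigr|=0$ is then used not to force atomicity of $\nu$, but to guarantee that $|\nabla u|^p\dx$ and the limit measure $\tilde\Gamma$ restricted to $\overline{\sum_g}$ (where $\tilde\Gamma=\Gamma$) live on disjoint supports, yielding $\tilde\Gamma\ge |\nabla u|^p + \frac{\nu}{C_H\C_g^*}$. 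The energy balance then reads
\[
1 = \int_\Om g|u|^p\dx + \|\nu\| + \nu_\infty
\]
against
\[
\frac{1}{B_g}=\overline{\lim_n}\int_\Om|\nabla u_n|^p\dx \ge \int_\Om|\nabla u|^p\dx + \frac{\|\nu\|}{C_H\C_g^*} + \Gamma_\infty,
\]
and the strict inequality $C_H\C_g^*<B_g$, $C_H\C_g(\infty)<B_g$ forces $\|\nu\|=\nu_\infty=0$ exactly as in your conclusion, but with the aggregate quantity $\|\nu\|$ playing the role of your $\sum_j\nu_j$. So the fix is to drop the atomic picture and run your same balance argument with $\|\nu\|$ and the global bound $\|\nu\|\le C_H\C_g^*\|\Gamma\|$ in place of the pointwise $\nu_j\le C_H\C_g(x_j)\mu_j$.
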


If  $g\in \H$ with $g\ge 0$, $\left|\overline{\sum_g} \right|=0$ and  $C_H  dist(g, \F(\Om)) < \norm{g}$, then by the above theorem, $B_g$ is attained on $\Dp$ (Corollary \ref{distrmk}). This helps us to produce Hardy potentials for which the map $G$ is not compact, 
however $B_g$ is attained.  The following theorem is an analogue of Theorem 1.3 of \cite{Tertikas}:
\begin{thm} \label{thmperturb}
 Let $h \in \H$ with $h \geq 0$ and $\left|\overline{\sum_h} \right|=0.$ Then for any non-zero, non-negative $ \phi \in \F(\Om)$, there exists $\epsilon_0 > 0$ such that $B_g$ is attained in $\Dp$ for  $g= h+ \epsilon \phi$, for all $ \epsilon > \epsilon_0.$
\end{thm}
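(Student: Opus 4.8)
The plan is to check, for $g=h+\epsilon\phi$ with $\epsilon$ large, the three hypotheses of Theorem \ref{exismin} and then to apply that theorem directly. Non-negativity is free: since $h\ge 0$, $\phi\ge 0$ and $\epsilon>0$, we have $g\ge 0$. The two substantive tasks are to bound the singular set of $g$ by that of $h$ \emph{uniformly in} $\epsilon$, so that $|\overline{\sum_g}|=0$ and $\sup_x\C_g(x)$ stays finite, and to show that the best constant $B_g$ can be forced to exceed $C_H\sup_x\C_g(x)$ by taking $\epsilon$ large.

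First I would compare concentration functions. Since $\norm{\cdot}$ is a Banach function norm, for each $x$ and $r$ the triangle inequality gives $\norm{g\chi_{B_r(x)}}\le\norm{h\chi_{B_r(x)}}+\epsilon\norm{\phi\chi_{B_r(x)}}$; the defining limits exist by monotonicity of $r\mapsto\norm{g\chi_{B_r(x)}}$, and letting $r\to 0$ (and $R\to\infty$ at the point $\infty$) yields $\C_g(x)\le\C_h(x)+\epsilon\C_\phi(x)$. Because $\phi\in\F(\Om)$, Theorems \ref{eqivthm} and \ref{eqivthm2} show that the map $G$ associated with $\phi$ is compact and hence $\C_\phi\equiv 0$ on $\overline{\Om}\cup\{\infty\}$. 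Thus $\C_g(x)\le\C_h(x)$ for every $x$, so $\sum_g\subseteq\sum_h$ and $|\overline{\sum_g}|\le|\overline{\sum_h}|=0$. Moreover monotonicity of the norm gives $\C_h(x)\le\norm{h}<\infty$, so $M:=C_H\sup_{x}\C_h(x)\le C_H\norm{h}$ is finite and bounds $C_H\C_g(x)$ uniformly in both $x$ and $\epsilon$.

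Next I would estimate $B_g$ from below. As $\phi\ge 0$ is non-zero in $L^1_{loc}(\Om)$, fix $u_0\in C_c^{\infty}(\Om)$ with $\int_\Om\phi|u_0|^p\dx>0$; using $h\ge 0$ and the variational characterization of the best constant,
\[
B_g\ \ge\ \frac{\int_\Om g|u_0|^p\dx}{\int_\Om|\nabla u_0|^p\dx}\ \ge\ \epsilon\,\frac{\int_\Om\phi|u_0|^p\dx}{\int_\Om|\nabla u_0|^p\dx}\ \xrightarrow[\epsilon\to\infty]{}\ \infty .
\]
Hence there is $\epsilon_0>0$ such that $B_g>M\ge C_H\C_g(x)$ for every $x\in\overline{\Om}\cup\{\infty\}$ whenever $\epsilon>\epsilon_0$ (the bound is automatic and strict at points where $\C_g(x)=0$, since $B_g>0$). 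All three hypotheses of Theorem \ref{exismin} then hold, and that theorem yields the attainment of $B_g$ in $\Dp$.

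The only genuinely delicate point is the uniformity: a single threshold $\epsilon_0$ must serve for all $x$ at once. This is precisely what the finite uniform bound $\sup_x\C_h(x)\le\norm{h}$ supplies, and it is where the Banach function space structure of $\H$ does the essential work, replacing the countability assumption on the singular set imposed in \cite{Tertikas,Smets}.
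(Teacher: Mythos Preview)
Your proof is correct and follows essentially the same route as the paper: both verify the hypotheses of Theorem~\ref{exismin} by showing $\C_g\le\C_h$ (via $\C_\phi\equiv 0$ for $\phi\in\F(\Om)$, so $\sum_g\subseteq\sum_h$) and then forcing $B_g$ to dominate $C_H\C_h^*$. The only difference is in how $B_g$ is pushed up: you evaluate the Rayleigh quotient at a fixed test function to get $B_g\to\infty$, whereas the paper uses the norm lower bound $B_g\ge\norm{g}\ge\max(\norm{h},\epsilon\norm{\phi})\ge\tfrac{1}{2}(\norm{h}+\epsilon\norm{\phi})$, which has the minor advantage of yielding the explicit threshold $\epsilon_0=(2C_H-1)\norm{h}/\norm{\phi}$.
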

\begin{rmk} \rm
$(i)$. We provide  cylindrical  Hardy potentials $g$ for which $|\sum_g|=0$, but $\sum_g$ is not countable (see Remark \ref{example}). Such cylindrical weights were considered by Badiale and Tarantello in \cite{Tarantello} (for $N=3$), Mancini et. al in \cite{Mancini} (for $N \geq 3$) to study certain semi-linear PDE involving Sobolev critical exponent. In astrophysics, such critical exponent problems with cylindrical weights often arises in the dynamics of galaxies \cite{Bertin,Ciotti}.

\noi $(ii)$. For a cylindrical Hardy potential $g \in \H$ with $|\overline{\sum_g}|=0$, one can consider its perturbation $\tilde{g}:=g+\phi$ by a suitable $\phi \in C_c^{\infty}(\Om)$ and apply the above theorem to ensure $B_{\tilde{g}}$ is attained in $\Dp$ (see Remark \ref{example} for a precise example). It is worth noticing that $|\overline{\sum_{\tilde{g}}}| = 0$ but not countable. Indeed,  the results of \cite{Tertikas,Smets} are not applicable for such Hardy potentials.
\end{rmk}

The rest of the paper is organized as follows. In Section~\ref{Prelim}, we recall some important results that are required for the development of this article. Further, 
we discuss the function spaces $\H$, $\F(\Om)$ and some embeddings of $\Dp$ in Section \ref{Em}.
 In Section~\ref{Compactness} we prove Theorem \ref{eqivthm}, Theorem \ref{eqivthm1} and Theorem \ref{eqivthm2}. Section~\ref{Existence} contains the proof of Theorem \ref{exismin} and Theorem \ref{thmperturb}.

\section{Preliminaries} \label{Prelim}
In this section, we  briefly outline the symmetrization, Banach function space, Lorentz spaces and $p$-capacity and list some of their properties. Further, we state a few other results that we  use in the subsequent sections. 
\subsection{Symmetrization}
Let $\Omega \subseteq \R^N$ be an open set.  Let $\M$ be the set of all extended real valued Lebesgue measurable functions that are finite a.e. in $\Om.$ For $f\in\M $ and for $s>0$, we define
$E_f(s)=\{x: |f(x)|>s \}$ and the distribution function $\alpha_f$ of $f$ is defined as 
\begin{eqnarray*}
\alpha_f(s) &: =&
 \big\vert E_f(s) 
 \big\vert, \, \mbox{ for } s>0,
\end{eqnarray*}
where $|A|$ denotes the Lebesgue measure of a set $A\subseteq \R^N.$
 Now, we define the {\it one dimensional decreasing rearrangement} $f^*$ of $f$ as below: 
 \begin{align*}
f^*(t):= \begin{cases*} \operatorname{ess}\ \sup f, \ \ t =0\\ \inf \{s>0 \, : \, \alpha_f(s) < t \}, \; t>0.   \end{cases*}                   
 \end{align*}
The map $f \mapsto f^*$ is not sub-additive. However, we obtain a sub-additive function from $f^*,$ namely the maximal function $f^{**}$ of $f^*$, defined by 
\begin{equation*}
f^{**}(t)=\frac{1}{t}\int_0^tf^*(\tau) d\tau, \quad t>0.
\end{equation*}
The sub-additivity of $f^{**}$ with respect to $f$ helps us to define norms in certain function spaces.

The { \it Schwarz symmetrization } of $f$ is defined by 
\begin{equation*}
  f^\star(x)=f^*(\omega_N|x|^N)  \label{relation}, \quad \forall\, x\in \Omega^\star,
\end{equation*}
where $\omega_N$ is the measure of the unit ball in $\R^N$ and $\Omega^\star$ is the open ball centered at the origin with same measure as $\Omega.$

Next, we state an important inequality concerning the Schwarz symmetrization, see Theorem 3.2.10 of \cite{EdEv}.
\begin{proposition}[Hardy-Littlewood inequality] 
Let $\Omega \subseteq \R^N$ with $N\ge 1$ and $f,g\in \M$ be nonnegative functions. Then
\begin{align} \label{HardyLittle}
 \int_{\Omega} f(x)g(x) \ dx \leq \int_{\Omega^\star}f^\star(x)g^\star(x) \ dx = \int_0^{|\Omega|}f^*(t) g^*(t)\ dt.
\end{align}
 \end{proposition}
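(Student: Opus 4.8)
The plan is to reduce both integrals to one and the same double integral over the ``layers'' of $f$ and $g$, using the layer-cake representation of a nonnegative function. For any nonnegative $h \in \M$ one has $h(x) = \int_0^\infty \chi_{E_h(s)}(x)\,\ds$, because the super-level sets $E_h(s)=\{|h|>s\}$ increase to $\{|h|>0\}$ as $s \downarrow 0$. Applying this to both $f$ and $g$ and invoking Tonelli's theorem (all integrands are nonnegative), I would write
\[
\int_\Omega f(x) g(x) \dx = \int_0^\infty \int_0^\infty \abs{E_f(s) \cap E_g(t)}\,\ds\,\dt .
\]
The identical computation applied to the Schwarz symmetrizations gives
\[
\int_{\Omega^\star} f^\star(x) g^\star(x)\dx = \int_0^\infty\int_0^\infty \abs{E_{f^\star}(s)\cap E_{g^\star}(t)}\,\ds\,\dt .
\]

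The heart of the argument is a pointwise comparison, in the variables $(s,t)$, of the two inner measures. On the left, the trivial bound $\abs{E_f(s)\cap E_g(t)} \le \min\{\alpha_f(s),\alpha_g(t)\}$ holds. On the right, by the definition $f^\star(x)=f^*(\omega_N\abs{x}^N)$ the level sets $E_{f^\star}(s)$ and $E_{g^\star}(t)$ are concentric open balls centred at the origin, hence one is contained in the other. Consequently their intersection is the smaller of the two balls and, using the equimeasurability $\abs{E_{f^\star}(s)} = \alpha_f(s)$ and $\abs{E_{g^\star}(t)} = \alpha_g(t)$, one obtains the exact identity $\abs{E_{f^\star}(s)\cap E_{g^\star}(t)} = \min\{\alpha_f(s),\alpha_g(t)\}$. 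Integrating these two relations over $(s,t)$ yields
\[
\int_\Omega f g \dx \le \int_0^\infty\int_0^\infty \min\{\alpha_f(s),\alpha_g(t)\}\,\ds\,\dt = \int_{\Omega^\star} f^\star g^\star \dx ,
\]
which is the first asserted inequality.

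For the final equality I would pass from the $N$-dimensional integral over $\Omega^\star$ to the one-dimensional integral over $(0,\abs{\Omega})$ via the change of variable $t = \omega_N \abs{x}^N$. Since $f^\star$ and $g^\star$ are radial and constant on spheres, integrating in the radial variable and using $\abs{\Omega^\star} = \abs{\Omega}$ converts $\int_{\Omega^\star} f^\star g^\star\dx$ into $\int_0^{\abs{\Omega}} f^*(t) g^*(t)\,\dt$; this is precisely the substitution that defines $f^\star$ from $f^*$.

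I expect the only genuine obstacle to be the \emph{nestedness} step: establishing that the super-level sets of a Schwarz symmetrization are balls, so that the intersection measure equals the minimum \emph{exactly} (producing equality on the right), whereas only an inequality is available on the left. Everything else --- the layer-cake identity, the Tonelli interchange, and the radial change of variables --- is routine once the equimeasurability of $f$ with $f^*$ and $f^\star$ is in hand.
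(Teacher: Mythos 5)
Your proof is correct: the layer-cake representation, the Tonelli interchange, the pointwise comparison $\abs{E_f(s)\cap E_g(t)}\le \min\{\alpha_f(s),\alpha_g(t)\}$ with exact equality on the symmetrized side because the super-level sets of $f^\star$ and $g^\star$ are nested concentric balls of measures $\alpha_f(s)$ and $\alpha_g(t)$, and the radial substitution $t=\omega_N\abs{x}^N$ giving the final equality are all sound (including when $|\Omega|=\infty$, with the convention that level sets of infinite measure correspond to all of $\R^N$). The paper offers no proof of its own --- it simply cites Theorem 3.2.10 of Edmunds--Evans --- and your layer-cake argument is essentially the standard proof found in that reference, so the two approaches coincide.
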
 

  \subsection{Banach function spaces}
\begin{definition} \label{BFC}
  A normed linear space $(X,\norm{.}_X)$ of functions in $\M$ is called a K\"othe function space if the following conditions are satisfied:
  \begin{enumerate}
   \item $\norm{f}_X = \| \ |f| \ \|_X$, for all $f \in X$,
   \item if $g_1 \in X$ and $|g_2| \leq |g_1|$ a.e., then $\|g_2\|_{X} \leq \|g_1\|_{X}$ and $g_2 \in X$.
  \end{enumerate}
  \end{definition}
  \noi The norm $\norm{.}_X$ is called a K\"othe function norm on $X.$ A complete K\"othe function space $(X,\norm{.}_X)$ is called as Banach function space and the associated norm $\norm{.}_X$ is called a Banach function space norm. 
  \begin{proposition} \cite[Theorem 2, Section 30, Chapter 6]{Zaanen} \label{BanachFS}
  Let $(X,\norm{.}_X)$ be a  K\"othe function space such that, for any non-negative sequence of function $ (f_n)$ in $X$ that increases to $f$, we have $\norm{f_n}_X$ increases to $ \norm{f}_X$. Then $(X,\norm{.}_X)$ is a Banach function space.  
  \end{proposition}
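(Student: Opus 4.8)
The plan is to establish completeness of $X$ via the Riesz--Fischer criterion: a normed space is complete precisely when every absolutely convergent series converges. So I would begin with a sequence $(g_n)\subset X$ satisfying $\sum_{n=1}^{\infty}\norm{g_n}_X =: M<\infty$ and aim to show that the partial sums $s_N=\sum_{n=1}^{N}g_n$ converge in $X$ to some $g\in X$. The hypothesis on monotone sequences is the only tool beyond the K\"othe axioms of Definition \ref{BFC}, so every analytic estimate must ultimately be reduced to an increasing sequence of non-negative functions.

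First I would pass to absolute values and form the increasing non-negative sequence $h_N=\sum_{n=1}^{N}|g_n|$, with pointwise limit $h=\sum_{n=1}^{\infty}|g_n|$, a priori valued in $[0,\infty]$. The triangle inequality together with the K\"othe property (1) gives $\norm{h_N}_X\le\sum_{n=1}^{N}\norm{g_n}_X\le M$. Here the hypothesis enters decisively: since $0\le h_N\uparrow h$ in $X$ with $\sup_N\norm{h_N}_X\le M<\infty$, the assumption yields that $\norm{h_N}_X$ increases to $\norm{h}_X$, whence $h\in X$ and $\norm{h}_X\le M$. Because every element of $X$ lies in $\M$, this forces $h<\infty$ a.e., so the series $\sum_n g_n(x)$ converges absolutely for a.e. $x$ to a measurable function $g$ with $|g|\le h$. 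By the ideal property (2) I then get $g\in X$.

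It remains to prove norm convergence $s_N\to g$, and this is where I expect the main subtlety to lie: the hypothesis controls only \emph{increasing} sequences, whereas the natural object here is the tail $t_N=\sum_{n>N}|g_n|$, which \emph{decreases} to $0$. To get around this I would fix $N$ and apply the monotone hypothesis to the increasing sequence $u_{N,M}=\sum_{n=N+1}^{M}|g_n|\uparrow t_N$ as $M\to\infty$, obtaining $\norm{t_N}_X=\lim_{M\to\infty}\norm{u_{N,M}}_X\le\sum_{n>N}\norm{g_n}_X$. Since the right-hand side is the tail of a convergent series, it tends to $0$, so $\norm{t_N}_X\to0$. Finally, the bound $|g-s_N|=\bigl|\sum_{n>N}g_n\bigr|\le t_N$ combined with the ideal property (2) gives $\norm{g-s_N}_X\le\norm{t_N}_X\to0$, proving $s_N\to g$ in $X$.

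Thus every absolutely convergent series in $X$ converges, so $(X,\norm{.}_X)$ is complete and therefore a Banach function space. The one step that genuinely requires care is the passage from the increasing-sequence hypothesis to control of the decreasing tails $t_N$; the device of writing each tail as the increasing limit of its own partial sums and combining this with the triangle-inequality bound is what makes the argument close. I would also pause to record the two routine but necessary justifications, namely that boundedness of $\norm{h_N}_X$ together with the hypothesis places $h$ in $X$, and that finiteness of the norm forces a.e.\ finiteness of $h$ (immediate from $X\subseteq\M$), since these are exactly what allow the pointwise construction of $g$ to proceed.
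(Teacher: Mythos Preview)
The paper does not supply its own proof of this proposition; it is simply quoted from Zaanen \cite[Theorem 2, Section 30, Chapter 6]{Zaanen}. Your argument via the Riesz--Fischer criterion is correct and is in fact the standard route to this result: build the a.e.\ limit by controlling $h=\sum_n|g_n|$ through the monotone (Fatou) hypothesis, then squeeze the tails $t_N$ by writing each as an increasing limit of finite sums. The only point worth making explicit is the reading of the hypothesis: as stated, ``$\norm{f_n}_X\uparrow\norm{f}_X$'' must be understood to include the assertion that $f\in X$ whenever $\sup_n\norm{f_n}_X<\infty$ (equivalently, one extends $\norm{\cdot}_X$ to all of $\M$ by allowing the value $+\infty$). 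You use exactly this when concluding $h\in X$ from $\norm{h_N}_X\le M$, so it would be worth flagging that interpretation once; after that the argument closes cleanly.
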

  For a Banach function space $(X,\norm{.}_X)$, we define its associate space as follows.
   \begin{definition} \label{assspacedef}
 Let $(X,\norm{.}_X)$ be a Banach function space. For $u\in \M$, define 
 \[ \norm{u}_{X'}= \sup \left\{ \int_{\Om} |fu|\ dx : f \in X, \ \norm{f}_X \leq 1 \right\}.\]
 Then the associate space $X'$ of $X$ is given by \[X' = \left\{ u\in \M\, :\norm{u}_{X'} < \infty \right \}.\]
\end{definition}
\noi One can verify that $X'$ is also a Banach function space with respect to the norm $\norm{.}_{X'}.$  For further readings on Banach function spaces, we refer to \cite{Bennett,EdEv,Zaanen}.

%
%

  \subsection{Lorentz spaces}
The Lorentz spaces are refinements of the usual Lebesgue spaces and introduced by Lorentz  in \cite{Lorentz}. For more details on Lorentz spaces and related results, we refer to the book \cite{EdEv}. 

 Given a function $f\in\mathcal{M}(\Omega)$ and $(p,q) \in [1,\infty)\times[1,\infty]$ 
we consider the following quantity:
\begin{align*} 
 |f|_{(p,q)} := \norm{t^{\frac{1}{p}-\frac{1}{q}} f^{*} (t)}_{{L^q((0,\infty))}}
=\left\{\begin{array}{ll}
         \left(\displaystyle\int_0^\infty \left[t^{\frac{1}{p}-\frac{1}{q}} {f^{*}(t)}\right]^q \ dt \right)^{\frac{1}{q}};\; 1\leq q < \infty, \vspace{4mm}\\ 
         \displaystyle\sup_{t>0}t^{\frac{1}{p}}f^{*}(t);\; q=\infty.
        \end{array} 
\right.
\end{align*}
The Lorentz space $L^{p,q}(\Om)$ is defined as
\[ L^{p,q}(\Om) := \left \{ f\in \mathcal{M}(\Om): \,   |f|_{(p,q)}<\infty \right \}.\]
$ |f|_{(p,q)}$ is  a complete quasi-norm on $L^{p,q}(\Om).$ 
For $(p,q) \in (1,\infty)\times[1,\infty]$, let  
 \[\norm{f}_{(p,q)}:= \norm{t^{\frac{1}{p}-\frac{1}{q}} f^{**} (t)}_{{L^q((0,\infty))}}.\]
Then $\norm{f}_{(p,q)}$ is a norm on $L^{p,q}(\Om)$ and it is equivalent to the quasi-norm $|f|_{(p,q)}$ (see Lemma 3.4.6 of \cite{EdEv}).
Next proposition identifies the associate space of Lorentz spaces,
 see \cite{Bennett} (Theorem 4.7, page 220).
 \begin{proposition} \label{assspace}
  Let $1<p<\infty$ and $1 \leq q\leq \infty$ (or $p=q=1$ or $p=q= \infty$). Then the associate space of $L^{p,q}(\Om)$ is, up to equivalence of norms, the Lorentz space $L^{p',q'}(\Om)$, where $\frac{1}{p} + \frac{1}{p'}=1$
 and $\frac{1}{q} + \frac{1}{q'}=1$.
  \end{proposition}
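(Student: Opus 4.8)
The plan is to establish the two inclusions $L^{p',q'}(\Om)\subseteq (L^{p,q}(\Om))'$ and $(L^{p,q}(\Om))'\subseteq L^{p',q'}(\Om)$, each with a norm estimate, so that the associate norm $\norm{u}_{(L^{p,q})'}$ and the Lorentz norm $\norm{u}_{(p',q')}$ are equivalent. Throughout I will work with the quasi-norm $|\cdot|_{(p,q)}$ built from $f^*$ and only at the end invoke its equivalence with the norm $\norm{\cdot}_{(p,q)}$ built from $f^{**}$ (Lemma 3.4.6 of \cite{EdEv}); this equivalence is exactly where constants, and hence the phrase ``up to equivalence of norms,'' enter. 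Note first that $|f|_{(p,q)}$ depends only on $f^*$, so $L^{p,q}(\Om)$ is rearrangement invariant; together with the Hardy-Littlewood inequality \eqref{HardyLittle} and the fact that, on the non-atomic measure space $\Om$, any prescribed decreasing profile can be realized as some $f^*$ with equality in \eqref{HardyLittle} for a function comonotone with $u$, this reduces the associate norm to the one-dimensional quantity $\sup\{\int_0^{\infty} f^*(t)u^*(t)\,dt : |f|_{(p,q)}\le 1\}$.

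For the inclusion $L^{p',q'}(\Om)\subseteq (L^{p,q}(\Om))'$, I fix $u\in L^{p',q'}(\Om)$ and $f\in L^{p,q}(\Om)$. By \eqref{HardyLittle},
\[\int_{\Om}|fu|\,dx \le \int_0^{\infty} f^*(t)u^*(t)\,dt = \int_0^{\infty}\bigl[t^{\frac1p-\frac1q}f^*(t)\bigr]\bigl[t^{\frac1{p'}-\frac1{q'}}u^*(t)\bigr]\,dt,\]
where the weights combine trivially because $\bigl(\frac1p-\frac1q\bigr)+\bigl(\frac1{p'}-\frac1{q'}\bigr)=0$. Applying the ordinary Hölder inequality on $(0,\infty)$ with the conjugate exponents $q,q'$ bounds the right-hand side by $|f|_{(p,q)}\,|u|_{(p',q')}$. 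Taking the supremum over $f$ with $\norm{f}_{(p,q)}\le 1$ and using the quasi-norm/norm equivalence gives $\norm{u}_{(L^{p,q})'}\le C\,\norm{u}_{(p',q')}$.

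For the reverse inclusion I exploit the one-dimensional reduction above. Given $u$, I choose the decreasing profile $h$ that saturates the Hölder step of the previous paragraph, i.e. $h$ with $\bigl[t^{\frac1p-\frac1q}h(t)\bigr]^q$ proportional to $\bigl[t^{\frac1{p'}-\frac1{q'}}u^*(t)\bigr]^{q'}$ (this is the regime $1<q<\infty$; the cases $q=\infty$ and $q=1$, where the Hölder step degenerates, are handled directly by the choices $h(t)=t^{-1/p}$ and $h=\chi_{(0,s)}$ respectively, optimized over $s$). Realizing $h$ as $f^*$ for a genuine function $f$ on $\Om$ comonotone with $u$—so that \eqref{HardyLittle} holds with equality—yields $\int_{\Om}|fu|\,dx = \int_0^{\infty} h(t)u^*(t)\,dt \ge c\,|f|_{(p,q)}\,|u|_{(p',q')}$. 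Dividing by $|f|_{(p,q)}$ and passing to the $\norm{\cdot}$-norms gives $\norm{u}_{(p',q')}\le C\,\norm{u}_{(L^{p,q})'}$, which together with the previous paragraph proves the equivalence for $1<p<\infty$, $1\le q\le\infty$. The degenerate corners $(p,q)=(1,1)$ and $(p,q)=(\infty,\infty)$ are the classical $L^1$–$L^\infty$ duality and need no rearrangement argument.

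I expect the main obstacle to be the reverse inclusion: constructing the extremal test function $f$ with the prescribed decreasing rearrangement while simultaneously forcing equality (or near-equality) in \eqref{HardyLittle}. This rests on the non-atomicity of Lebesgue measure on $\Om$, requires a small approximation when $u^*$ has flat pieces or jumps, and needs separate bookkeeping at the endpoints $q\in\{1,\infty\}$ where the Hölder step collapses. By contrast, the forward inclusion and the transition between the $f^*$-quasi-norm and the $f^{**}$-norm are routine once the cited equivalence is in hand.
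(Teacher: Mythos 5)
First, for calibration: the paper does not prove Proposition \ref{assspace} at all --- it is quoted from Bennett and Sharpley (Theorem 4.7, page 220 of \cite{Bennett}) --- so your sketch can only be measured against the standard textbook argument, which it broadly follows: Hardy--Littlewood plus weighted H\"older on $(0,\infty)$ for the inclusion $L^{p',q'}(\Om)\subseteq (L^{p,q}(\Om))'$, the resonance of non-atomic Lebesgue measure to reduce the associate norm to $\sup\{\int_0^\infty f^*u^*\,dt : |f|_{(p,q)}\le 1\}$, and an extremal profile for the reverse inclusion. The forward half is correct as written, the passage between $|\cdot|_{(p,q)}$ and $\norm{\cdot}_{(p,q)}$ is legitimately delegated to the cited equivalence, and your endpoint choices $h=\chi_{(0,s)}$ (for $q=1$, optimized over $s$) and $h(t)=t^{-1/p}$ (for $q=\infty$) do work.

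The genuine gap is in the reverse inclusion in the regime $1<q<p$. Your saturation condition $[t^{1/p-1/q}h(t)]^q\propto[t^{1/p'-1/q'}u^*(t)]^{q'}$ forces $h(t)=t^{q'/p'-1}u^*(t)^{q'/q}$, and the exponent $q'/p'-1$ is nonpositive precisely when $q\ge p$. When $q<p$ (equivalently $q'>p'$) this profile is \emph{increasing} wherever $u^*$ is flat: for instance with $p=3$, $q=2$ and $u=\chi_E$ one gets $h(t)=t^{1/3}\chi_{(0,|E|)}(t)$. Such an $h$ is not non-increasing, hence is not the decreasing rearrangement of any function, so it cannot be ``realized as $f^*$.'' Your fallback --- a comonotone realization forcing equality in \eqref{HardyLittle} --- does preserve $\int_{\Om}|fu|\,dx=\int_0^\infty h\,u^*\,dt$, but then $f^*=h^*$ (the decreasing rearrangement of $h$), and since the weight $t^{q/p-1}$ is itself decreasing, Hardy--Littlewood gives $\int_0^\infty t^{q/p-1}h(t)^q\,dt\le\int_0^\infty t^{q/p-1}h^*(t)^q\,dt$; the rearrangement pushes the norm of the test function \emph{up}, so $|f|_{(p,q)}$ is no longer bounded by the quantity your computation produces, exactly where you need that bound. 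This is not ``a small approximation when $u^*$ has flat pieces or jumps'': the case $q<p$ is the hard half of the theorem, and the literature closes it by different machinery --- Bennett--Sharpley route the estimate through the $f^{**}$-based norm and Hardy's inequalities, and alternatively one can invoke a duality principle for integrals against non-increasing functions (\`a la Sawyer). For $p\le q$ your argument does close, modulo the routine truncations and approximations you already flag.
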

  
\subsection{The $p$-capacity} 
For any subset $A$ of $\R^N$ define,\[ {\cp(A)}:= \inf \left\{ \displaystyle \int_{\Omega} | \nabla u |^p\ dx : u \in \mathcal{D}^{1,p}_0(\R^N), A \subseteq \mbox{int} \{ u \geq 1\} \right\}.\]
It can be shown that the above definition is consistent with our earlier definition when $A$ is relatively compact  in $\Om$.  
 Next, we list some of the properties of capacity in the following proposition.
\begin{proposition} \label{propcap}
 \begin{enumerate}[(a)]
  \item Let $\Om_1 \subseteq \Om_2$ be open in $\R^N$. Then $\cp(., \Om_2) \leq \cp(., \Om_1).$
  \item $\cp$ is an outer measure on $\R^N.$
  \item  For $\lambda >0$ and $F \cset \R^N$, $ \cp(\lambda F ) = \lambda^{N-p} \cp( F ) $.
  \item For $F \cset \R^N$, $\exists C>0$ depending on $p, N$ such that $|F| \leq C \cp( F )^{\frac{N}{N-p}}$. 
  \item For $N>p,$ $\cp(B_1)= N \om_N \left(\frac{N-p}{p-1}\right)^{p-1},$ where $B_1$ is the unit ball in $\R^N$.
  \item $\cp(L(F))=\cp(F)$, for any affine isometry $L:\R^N \mapsto \R^N.$
\end{enumerate}
\end{proposition}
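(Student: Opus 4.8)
\noi All six assertions are proved by manipulating the class of admissible competitors in the variational definition of $\cp$, so the plan is to exhibit, for each property, a correspondence of test functions that controls the $p$-Dirichlet energy $\int|\nabla u|^p$. The invariance statements (a), (c) and (f) share one template: build a map between admissible classes that rescales the energy in a controlled way, then pass to the infimum. For (a), extension by zero realises $\mathcal{D}^{1,p}_0(\Om_1)$ as a subspace of $\mathcal{D}^{1,p}_0(\Om_2)$ without changing $\int|\nabla u|^p$, so the admissible class for $\Om_2$ contains that for $\Om_1$ and the infimum can only drop. For (c), to each $u$ admissible for $F$ I associate $u_\lambda(x):=u(x/\lambda)$, which is admissible for $\lambda F$; the substitution $y=x/\lambda$ gives $\int|\nabla u_\lambda|^p=\lambda^{N-p}\int|\nabla u|^p$, and repeating the argument with $1/\lambda$ in place of $\lambda$ yields equality. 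For (f), writing the isometry as $L(x)=Ax+b$ with $A$ orthogonal and setting $v:=u\circ L^{-1}$, orthogonality gives $|\nabla v(y)|=|\nabla u(L^{-1}y)|$ and unit Jacobian, so the energy is preserved and the admissible classes correspond bijectively.

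For (b), the emptiness ($u\equiv0$) and monotonicity (a larger set has a smaller admissible class) axioms are immediate, and the real content is countable subadditivity. Given $\eps>0$, I pick $u_n$ admissible for $A_n$ with $\int|\nabla u_n|^p\le\cp(A_n)+\eps 2^{-n}$ and truncate to assume $0\le u_n\le1$, which does not raise the energy. With $v_k:=\max(u_1,\dots,u_k)$, the lattice identity $\nabla\max(u,w)=\chi_{\{u\ge w\}}\nabla u+\chi_{\{u<w\}}\nabla w$ gives inductively $\int|\nabla v_k|^p\le\sum_{j\le k}\int|\nabla u_j|^p$. The $v_k$ increase to $u:=\sup_n u_n$ with energies bounded by $\sum_n\cp(A_n)+\eps$; weak lower semicontinuity of the energy places $u$ in $\spr$ with $\int|\nabla u|^p\le\sum_n\cp(A_n)+\eps$, and since $A_n\subseteq\mathrm{int}\{u_n\ge1\}\subseteq\mathrm{int}\{u\ge1\}$ the limit $u$ is admissible for $\bigcup_nA_n$. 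Letting $\eps\to0$ finishes (b).

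The quantitative parts (d) and (e) use the Sobolev inequality and an explicit radial computation. For (d), from $\|u\|_{L^{p^*}}\le C_S\|\nabla u\|_{L^p}$ with $p^*=\frac{Np}{N-p}$ and the pointwise bound $u\ge1$ on $F$, I get $|F|^{1/p^*}\le\|u\|_{L^{p^*}}$; raising to the power $p$ and taking the infimum over admissible $u$ yields $|F|^{(N-p)/N}\le C_S^{p}\cp(F)$, which is (d) with $C=C_S^{pN/(N-p)}$. For (e), Schwarz symmetrization together with the P\'olya--Szeg\H{o} inequality reduces the problem to radial, radially decreasing competitors, whose energy is $N\om_N\int_1^\infty|u'(r)|^p r^{N-1}\,dr$; the associated Euler--Lagrange equation $(r^{N-1}|u'|^{p-2}u')'=0$ with $u(1)=1$, $u(\infty)=0$ is solved by $u(r)=r^{-(N-p)/(p-1)}$ (integrable at infinity exactly because $N>p$), and substituting back produces the value $N\om_N\big(\tfrac{N-p}{p-1}\big)^{p-1}$.

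I expect the genuine obstacle to be the lower bound in (e): proving that this explicit profile attains, and does not merely bound, the capacity. This rests on the strict convexity of $u\mapsto\int|\nabla u|^p$, which makes the $p$-harmonic capacitary potential the unique minimiser, combined with P\'olya--Szeg\H{o} to justify restricting to the radial class; one must also handle the mismatch between ``$u\ge1$ on a neighbourhood'' and ``$u=1$ on $\overline{B_1}$'' by a harmless dilation $u(x/(1+\delta))$ followed by $\delta\to0$. The only other recurring subtlety, appearing in both (b) and (e), is the passage to the limit for monotone, energy-bounded sequences in $\spr$, which is handled by weak compactness and the weak lower semicontinuity of the $p$-energy.
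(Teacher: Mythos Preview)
Your proposal is correct: each of the six arguments you sketch is the standard one and goes through as written. The paper itself does not prove this proposition but simply dispatches the parts by citation---(a) ``follows easily from the definition'', (b) to \cite[Theorem~4.14]{Evans}, (c), (d), (f) to \cite[Theorem~4.15]{Evans}, and (e) to \cite[Section~2.2.4]{Mazya}---so your write-up is more detailed than the paper's own treatment, but the underlying arguments you give are precisely those one finds in the cited references (zero extension for monotonicity, the $\sup_n u_n$ construction for countable subadditivity, scaling and isometry invariance of the Dirichlet energy, the Sobolev inequality for the isocapacitary bound, and the explicit radial $p$-harmonic potential for the ball). In short, the mathematical content matches; you have simply unpacked what the paper leaves to the literature.
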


\begin{proof}
$(a)$ Follows easily from the definition of capacity.

\noi $(b)$ See Theorem 4.14 of \cite{Evans}(page 174).

 \noi $(c), (d),  (f)$ See Theorem 4.15 of \cite{Evans}(page 175).
 
\noi $(e)$ Section 2.2.4 of \cite{Mazya}  (page 106).
 \end{proof}
 
 \begin{rmk} \rm
If  a set $A$ is measurable with respect to $\cp$ then $\cp(A)$ must be $0$ or $\infty$, see Theorem 4.14 of \cite{Evans} (page 174).
\end{rmk}

 The next theorem follows from Maz'ya's characterization of Hardy potential, \cite{Mazya} (see Section 2.3.2, page 111).
\begin{thm} \label{Mazya's condition} 
Let $p \in (1,N)$, $\Om \subseteq \R^N$ be open and $g \in L^1_{loc}(\Om)$. If $g \in \H$, then
\[\int_{\Om} g|u|^p \ dx \leq C_H \norm{g} \int_{\Om} |\nabla u|^p \ dx, \forall u \in \Dp,\]
where $C_H=p^p(p-1)^{1-p}$.
\end{thm}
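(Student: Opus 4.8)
The plan is to prove the stronger inequality with $|g|$ in place of $g$, namely
\[
\int_\Om |g|\,|u|^p \dx \le C_H\norm{g}\int_\Om |\nabla u|^p \dx ,
\]
from which the theorem follows at once since $\int_\Om g|u|^p \dx \le \int_\Om |g|\,|u|^p \dx$. By the density of $C_c^\infty(\Om)$ in $\Dp$, the identity $|\nabla |u|| = |\nabla u|$ a.e., and a routine Fatou argument (pass to an a.e.-convergent subsequence of an approximating sequence, noting that the $\Dp$-norms converge), it suffices to fix a non-negative $u\in C_c^\infty(\Om)$. For such $u$ set $N_t=\{x\in\Om: u(x)>t\}$, $t>0$. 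The first step is the layer-cake identity
\[
\int_\Om |g|\,u^p \dx = p\int_0^\infty t^{p-1}\Big(\int_{N_t}|g|\dx\Big)\dt ,
\]
obtained by writing $u(x)^p=p\int_0^{u(x)}t^{p-1}\dt$ and applying Tonelli's theorem to the non-negative integrand $|g(x)|\,\chi_{\{u(x)>t\}}$.

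The second step converts the inner integral into a capacity. For each $t>0$ the set $\overline{N_t}$ is a closed subset of the compact set $\operatorname{supp}u$, hence $\overline{N_t}\cset\Om$. Thus the definition of $\norm{g}$ applies with $F=\overline{N_t}$ (the case $|\overline{N_t}|=0$ being trivial) and yields
\[
\int_{N_t}|g|\dx \le \int_{\overline{N_t}}|g|\dx \le \norm{g}\,\cp(\overline{N_t},\Om).
\]
Substituting this bound into the layer-cake identity reduces the theorem to the capacitary strong-type inequality
\[
p\int_0^\infty t^{p-1}\,\cp(\overline{N_t},\Om)\dt \le C_H\int_\Om |\nabla u|^p \dx, \qquad C_H=p^p(p-1)^{1-p}.
\]

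The heart of the matter — and the main obstacle — is this last inequality \emph{with the sharp constant} $C_H$. Note that the naive bound $\cp(\overline{N_t},\Om)\le t^{-p}\int_\Om|\nabla u|^p\dx$ (from the admissible function $u/t$) is useless, since it makes the $t$-integral diverge; one genuinely needs Maz'ya's capacitary inequality. I would derive it by estimating the condenser capacities of the nested rings $\{s<u<t\}$: the truncation $\min((u-s)_+,\,t-s)/(t-s)$ is admissible for the condenser $(\overline{N_t},N_s)$ and has $p$-energy $(t-s)^{-p}\int_{\{s<u<t\}}|\nabla u|^p\dx$. Combining these ring estimates through the ``condensers in series'' inequality for $p$-capacity (the nonlinear analogue of the additivity of resistances, carried by the exponent $1/(1-p)$) and optimizing the resulting one-dimensional weighted sum produces exactly $C_H$. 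A crude dyadic level-decomposition of the same argument already gives the non-sharp constant $4^p$, and it is the refinement to a continuous decomposition of the levels of $u$ that upgrades $4^p$ to $p^p(p-1)^{1-p}$. This capacitary inequality is precisely the content of Maz'ya's characterization (\cite{Mazya}, Section~2.3.2), which we invoke; the passage from $C_c^\infty(\Om)$ back to all of $\Dp$ is then completed by the Fatou argument noted above.
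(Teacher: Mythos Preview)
Your outline is correct and follows exactly the standard route---layer-cake decomposition, the bound $\int_{N_t}|g|\le\norm{g}\,\cp(\overline{N_t},\Om)$, and then Maz'ya's capacitary strong-type inequality with the sharp constant $p^p(p-1)^{1-p}$. The paper itself does not prove this theorem at all: it is stated in the preliminaries as a consequence of Maz'ya's characterization, with a direct citation to \cite{Mazya}, Section~2.3.2. So there is no ``paper's own proof'' to compare against; you have supplied the sketch that the paper omits, and your reduction ultimately rests on the same reference for the key capacitary inequality.
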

 It is easy to see that, the best constant satisfies the following inequalities: \begin{equation}\label{estimate}
     \norm{g} \leq B_g \leq C_H \norm{g}.
 \end{equation}

\subsection{The space of measures}
Let $\mathbb{M} (\R^N)$ be the space of all bounded signed measures on $\R^N.$ Then $\mathbb{M} (\R^N)$ is a Banach space with respect to the norm $\norm{\mu}=|\mu|(\R^N)$ (total variation of the measure $\mu$). 
The next proposition follows from the uniqueness of the Riesz representation theorem.
\begin{proposition} \label{defmeasure}
 Let $\mu \in \mathbb{M}(\R^N)$ be a positive measure. Then for an open $V \subseteq \R^N$,
 \[ \mu(V)= \sup \left \{ \int_{\R^N} \phi \ d\mu : 0 \leq \phi \leq 1, \phi \in C_c^{\infty}(\R^N) \ with \ Supp(\phi) \subseteq V   \right \}\]
and for any Borel set $E \subseteq \Om$, $\mu(E):= \inf \left\{ \mu(V) : E \subseteq V, \, V \text{open} \right\}$.
\end{proposition}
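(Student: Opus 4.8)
The plan is to recognize the two displayed identities as the defining (construction) formulas for the Radon measure produced by the Riesz representation theorem applied to the positive linear functional $L(\phi)=\int_{\R^N}\phi\,d\mu$ on $C_c^\infty(\R^N)$, and then to invoke the uniqueness clause to identify that measure with $\mu$ itself. Concretely, a positive $\mu\in\mathbb{M}(\R^N)$ is a finite positive Borel measure on the locally compact, $\sigma$-compact Polish space $\R^N$, hence automatically a Radon measure: it is outer regular by open sets and inner regular by compact sets. I would first record this regularity, and then verify directly that the supremum and infimum formulas in the statement simply re-encode inner and outer regularity, the only wrinkle being that the test functions are required to be smooth.

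For the first identity I would prove the two inequalities separately. The easy direction is ``$\ge$'': if $\phi\in C_c^\infty(\R^N)$ satisfies $0\le\phi\le1$ and $\mathrm{Supp}(\phi)\subseteq V$, then $\phi$ vanishes off $V$ and is bounded by $1$ on $V$, so $\phi\le\chi_V$ pointwise; integrating against the positive measure $\mu$ gives $\int_{\R^N}\phi\,d\mu\le\mu(V)$, and taking the supremum yields $\sup\le\mu(V)$. For the reverse inequality I would use inner regularity: given $\eps>0$, choose a compact $K\subseteq V$ with $\mu(K)>\mu(V)-\eps$, and then invoke the smooth version of Urysohn's lemma on $\R^N$ to produce $\phi\in C_c^\infty(\R^N)$ with $0\le\phi\le1$, $\phi\equiv1$ on $K$, and $\mathrm{Supp}(\phi)\subseteq V$. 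Then $\int_{\R^N}\phi\,d\mu\ge\mu(K)>\mu(V)-\eps$, so the supremum is at least $\mu(V)-\eps$ for every $\eps>0$, hence at least $\mu(V)$. Combining the two inequalities gives the open-set formula.

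For the second identity I would argue similarly. The inequality $\mu(E)\le\mu(V)$ for every open $V\supseteq E$ is just monotonicity of $\mu$, so $\mu(E)\le\inf\{\mu(V):E\subseteq V,\ V\text{ open}\}$. The reverse inequality is precisely outer regularity of the Radon measure $\mu$: for each $\eps>0$ there is an open $V\supseteq E$ with $\mu(V)<\mu(E)+\eps$, giving $\inf\le\mu(E)$, and hence equality. Together with the first part this exhibits $\mu$ as recovered from $L$ by exactly the Riesz construction formulas, which is the concrete meaning of the uniqueness assertion.

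The only genuine technical point, and the step I would be most careful about, is replacing the continuous Urysohn functions used in the classical Riesz construction by smooth ones: one must know that for a compact $K$ contained in an open $V\subseteq\R^N$ there exists $\phi\in C_c^\infty(\R^N)$ with $0\le\phi\le1$, $\phi\equiv1$ on $K$, and $\mathrm{Supp}(\phi)\subseteq V$. This is the smooth Urysohn lemma, obtained by mollifying the indicator of a slightly enlarged compact neighbourhood of $K$ whose closure still lies in $V$; it is what allows the smooth test class $C_c^\infty(\R^N)$ appearing in the statement to realize the full value $\mu(V)$ rather than a strictly smaller supremum. Once this is in hand, the whole proposition reduces to the standard regularity of finite positive Borel measures on $\R^N$.
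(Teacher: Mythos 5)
Your proof is correct and follows essentially the same route as the paper, which disposes of this proposition in one line by citing the uniqueness clause of the Riesz representation theorem; your argument is exactly the standard unpacking of that citation, verifying the two formulas via inner/outer regularity of a finite Borel measure on $\R^N$ together with the smooth Urysohn lemma. You were right to flag the smooth-Urysohn step as the only genuine technical point, and your handling of it (mollifying the indicator of a compact neighbourhood of $K$ with closure in $V$) is the standard and correct fix.
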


Recall that, a  sequence $(\mu_n)$ is said to be weak* convergent to $\mu$ in $\mathbb{M}(\R^N)$, if
 \begin{eqnarray*}
  \int_{\R^N} \phi \ d\mu_n \ra  \int_{\R^N} \phi \ d\mu, \ as \ n \ra \infty,  \forall\,  \phi \in C_c(\R^N).
 \end{eqnarray*}
 In this case we denote $\mu_n \wrastar \mu$. The next proposition is a consequence of Banach-Alaoglu theorem which states that for any Banach space $X$, the closed  unit ball in  $X^*$ is weak* compact.
\begin{proposition} \label{BanachAlaoglu}
 Let $(\mu_n)$ be a bounded sequence in $\mathbb{M}(\R^N)$, then there exists $\mu \in \mathbb{M}(\R^N)$ such that $\mu_n \overset{\ast}{\rightharpoonup} \mu$ up to a subsequence.
\end{proposition}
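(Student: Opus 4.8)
The plan is to realize $\mathbb{M}(\R^N)$ as the dual of a separable Banach space and then apply the Banach--Alaoglu theorem, quoted here as a tool, together with a metrizability argument that upgrades topological weak* compactness to sequential weak* compactness. Concretely, let $C_0(\R^N)$ denote the space of continuous functions on $\R^N$ vanishing at infinity, equipped with the supremum norm. By the Riesz representation theorem, the map $\mu \mapsto \left( \phi \mapsto \int_{\R^N} \phi \, d\mu \right)$ is an isometric isomorphism of $\mathbb{M}(\R^N)$ onto the dual $C_0(\R^N)^*$, under which the total variation norm $\norm{\mu} = |\mu|(\R^N)$ corresponds to the operator norm. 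In particular, the weak* topology on $\mathbb{M}(\R^N)$ coincides with the weak* topology on $C_0(\R^N)^*$ induced by the duality with $C_0(\R^N)$.

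Next I would record that $C_0(\R^N)$ is separable: it is the sup-norm closure of $C_c(\R^N)$, and the latter admits a countable dense subset (for instance, by a Stone--Weierstrass argument on each closed ball combined with a smooth cutoff, one obtains a countable family of rational-coefficient polynomials multiplied by bump functions whose closure is all of $C_0(\R^N)$). Since the predual $C_0(\R^N)$ is separable, the weak* topology restricted to any closed ball of $C_0(\R^N)^*$ is metrizable, and hence weak* compact subsets of such balls are weak* sequentially compact.

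With these ingredients in place the conclusion follows quickly. Given a bounded sequence $(\mu_n)$, I would fix $M>0$ with $\norm{\mu_n} \leq M$ for all $n$, so that $(\mu_n)$ lies in the closed ball $B_M = \{ \nu \in \mathbb{M}(\R^N) : \norm{\nu} \leq M \}$. By Banach--Alaoglu, $B_M$ is weak* compact, and by the metrizability just noted it is weak* sequentially compact; thus there is a subsequence $(\mu_{n_k})$ and some $\mu \in B_M$ with $\int_{\R^N} \phi \, d\mu_{n_k} \to \int_{\R^N} \phi \, d\mu$ for every $\phi \in C_0(\R^N)$. Since $C_c(\R^N) \subseteq C_0(\R^N)$, this convergence holds in particular for all $\phi \in C_c(\R^N)$, which is exactly the statement $\mu_{n_k} \wrastar \mu$; moreover $\mu \in \mathbb{M}(\R^N)$ with $\norm{\mu} \leq \liminf_k \norm{\mu_{n_k}} \leq M$ by weak* lower semicontinuity of the norm.

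The main obstacle is not any single estimate but the correct assembly of the functional-analytic framework: one must confirm that $\mathbb{M}(\R^N)$ is genuinely the dual of $C_0(\R^N)$, so that Banach--Alaoglu is applied to the right ball, and that separability of $C_0(\R^N)$ furnishes metrizability of the weak* topology on bounded sets, which is precisely the bridge from the abstract (topological) compactness of Banach--Alaoglu to the sequential statement demanded here. A fully self-contained alternative that sidesteps metrizability would be a diagonal extraction: fix a countable dense set $\{\phi_j\}$ in $C_0(\R^N)$, use the uniform bound $\big| \int_{\R^N} \phi_j \, d\mu_n \big| \leq M \norm{\phi_j}_\infty$ to extract a subsequence along which $\int_{\R^N} \phi_j \, d\mu_n$ converges for every $j$, and then pass to all $\phi \in C_0(\R^N)$ by density and uniform boundedness, defining $\mu$ through the resulting bounded linear functional via Riesz representation.
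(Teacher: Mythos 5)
Your proof is correct and takes essentially the same route the paper intends: the paper gives no written proof, simply asserting that the proposition ``is a consequence of Banach-Alaoglu theorem,'' i.e., viewing $\mathbb{M}(\R^N)$ as the dual of $C_0(\R^N)$ and invoking weak* compactness of closed balls. In fact you supply the one step the paper's remark glosses over --- separability of the predual $C_0(\R^N)$, which makes the weak* topology metrizable on bounded balls and upgrades the topological compactness of Banach--Alaoglu to the \emph{sequential} extraction the statement actually claims (your diagonal-argument alternative achieves the same end without metrizability).
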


\subsection{Brezis-Lieb lemma}
The following lemma is due to Brezis and Lieb (see Theorem 1 of \cite{Brezis-Lieb}). 
\begin{lem}\label{Bresiz-Lieb}
 Let $(\Om, \A, \mu)$ be a measure space and $(f_n)$ be a sequence of complex
 -valued measurable functions which are uniformly bounded in $L^p(\Om, \mu)$ for some $0<p< \infty$. Moreover, if $(f_n)$ converges to $f$ a.e., then
\[\lim_{n \ra \infty} \left| \norm{f_n}_{(p, \mu)} - \norm{f_n-f}_{(p, \mu)} \right| = \norm{f}_{(p, \mu)}.\]
\end{lem}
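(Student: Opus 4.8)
The plan is to follow the classical argument of Brezis and Lieb, reducing the claimed limit to the stronger assertion
\[
\lim_{n\to\infty}\int_\Om \Big| |f_n|^p - |f_n-f|^p - |f|^p \Big|\diff\mu = 0 .
\]
This indeed suffices: the displayed quantity dominates $\big|\int_\Om|f_n|^p\diff\mu - \int_\Om|f_n-f|^p\diff\mu - \int_\Om|f|^p\diff\mu\big|$, so once it tends to $0$ the three $p$-th power integrals satisfy the desired relation and, since $\int_\Om|f|^p\diff\mu\ge 0$, taking absolute values recovers the statement of the lemma. Before starting, I would record two preliminary facts. First, by Fatou's lemma the a.e.\ convergence $f_n\to f$ together with the uniform bound $\sup_n\int_\Om|f_n|^p\diff\mu<\infty$ gives $f\in L^p(\Om,\mu)$, so all three integrands are integrable. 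Second, writing $g_n:=f_n-f$, we have $g_n\to 0$ a.e.\ and $C:=\sup_n\int_\Om|g_n|^p\diff\mu<\infty$.

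The engine of the proof is an elementary pointwise inequality: for every $\epsilon>0$ there is a constant $C_\epsilon>0$ with
\[
\big| |a+b|^p - |a|^p \big| \le \epsilon\,|a|^p + C_\epsilon\,|b|^p, \qquad \forall\, a,b\in\mathbb{C}.
\]
I would obtain this by homogeneity, dividing by $|a|^p$ to reduce it to the one-variable estimate $\big||1+t|^p-1\big|\le\epsilon+C_\epsilon t^p$ for $t=|b|/|a|\ge 0$, which holds because $t\mapsto|1+t|^p-1$ is continuous and grows like $t^p$ at infinity. Applying it pointwise with $a=g_n$ and $b=f$, and adding $|f|^p$, gives
\[
\Big| |f_n|^p - |g_n|^p - |f|^p \Big| \le \big| |g_n+f|^p - |g_n|^p\big| + |f|^p \le \epsilon\,|g_n|^p + (C_\epsilon+1)\,|f|^p .
\]

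Next I would introduce $W_n^\epsilon:=\big(\,\big| |f_n|^p - |g_n|^p - |f|^p \big| - \epsilon\,|g_n|^p\,\big)^+$. By the last display $0\le W_n^\epsilon\le (C_\epsilon+1)|f|^p$, a fixed $L^1(\Om,\mu)$ majorant, while $g_n\to 0$ a.e.\ forces $W_n^\epsilon\to 0$ a.e. Hence the dominated convergence theorem yields $\int_\Om W_n^\epsilon\diff\mu\to 0$. Since $\big| |f_n|^p - |g_n|^p - |f|^p \big|\le W_n^\epsilon+\epsilon|g_n|^p$ pointwise, integrating and passing to the limit superior gives
\[
\limsup_{n\to\infty}\int_\Om \Big| |f_n|^p - |g_n|^p - |f|^p \Big|\diff\mu \le \epsilon\,C .
\]
Letting $\epsilon\downarrow 0$ forces the left-hand side, which does not depend on $\epsilon$, to vanish; this is precisely the reduced assertion and completes the proof.

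I expect the one genuinely delicate point to be the elementary inequality, which is what makes the $\epsilon$-splitting work: its role is to absorb the a priori non-integrable part $|g_n|^p$ into a term that is uniformly small by virtue of the bound $C<\infty$, leaving a remainder controlled by the fixed integrable majorant $(C_\epsilon+1)|f|^p$. Everything else is a routine interplay of Fatou's lemma and dominated convergence. The only other items needing a line of verification are the uniform $L^p$-boundedness of $(g_n)$ (immediate from the triangle inequality and $f\in L^p(\Om,\mu)$) and the harmless presence of the complex modulus.
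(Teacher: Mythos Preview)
Your argument is the classical Brezis--Lieb proof and is correct. The paper does not actually prove this lemma: it merely cites Theorem~1 of \cite{Brezis-Lieb} and records the elementary inequality $\big||a+b|^p-|a|^p\big|\le \epsilon|a|^p+C(\epsilon,p)|b|^p$ (labelled \eqref{inequality} there) for later use. Your proof is precisely the one in that reference, built on that same inequality, so there is nothing to compare.

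One small remark on the reduction step: as written, the lemma's conclusion involves $\norm{\cdot}_{(p,\mu)}$, and the paper's applications (e.g.\ in Lemma~\ref{lemmasmets}) make clear that what is meant is the relation among the $p$-th power integrals $\int|f_n|^p$, $\int|f_n-f|^p$, $\int|f|^p$, which is exactly what you establish. If one read $\norm{\cdot}_{(p,\mu)}$ literally as the $L^p$ norm without the $p$-th power, the stated identity would in general be false (take $p=2$, $a_n=\int|f_n|^2=n^2+1$, $b_n=\int|f_n-f|^2=n^2$), so your interpretation is the correct one in context.
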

\noi We also  require the following inequality (see \cite{Lieb}, page 22) that played an important role in the proof of Bresiz-Lieb lemma: for $a,b \in \mathbb{C}$,
\begin{equation} \label{inequality}
\big||a+b|^p-|a|^p \big|\leq \epsilon |a|^p + C(\epsilon,p)|b|^p 
\end{equation}
valid for each $\epsilon >0$ and  $0< p< \infty$.

 \section{Embeddings} \label{Em}
 In this section we prove the following continuous embeddings: 
 \[L^{{\frac{N}{p}},\infty}(\Om)\hookrightarrow \H; \quad \F_{\frac{N}{p}}(\Om) \hookrightarrow \F(\Om);\quad  I(\Om)\hookrightarrow \F(\Om).\]
 We provide alternate proofs for certain classical embeddings and also provide an embedding of $\Dp$ finer than Lorentz-Sobolev embeddings.
\begin{proposition}
 For  $p \in (1,N)$ and an open subset $\Om$  in  $\R^N$, $L^{{\frac{N}{p}},\infty}(\Om)$ is continuously embedded in $\H$.
 \end{proposition}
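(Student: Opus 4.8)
The plan is to bound the Maz'ya-type norm $\norm{g}$ directly by the Lorentz quasi-norm $|g|_{(\frac Np,\infty)}$; such an estimate simultaneously shows $g\in\H$ and that the inclusion is continuous. Fix an arbitrary $F\cset\Om$ with $|F|\ne 0$. The whole argument reduces to producing a constant $C=C(N,p)$, independent of $F$, with $\int_F|g|\dx\le C\,|g|_{(\frac Np,\infty)}\,\cp(F,\Om)$.

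First I would estimate the numerator by symmetrization. Since $\chi_F^*=\chi_{(0,|F|)}$, the Hardy-Littlewood inequality \eqref{HardyLittle} gives
\[
\int_F|g|\dx=\int_\Om|g|\chi_F\dx\le\int_0^{|F|}g^*(t)\dt.
\]
By the very definition of the quasi-norm, $g^*(t)\le|g|_{(\frac Np,\infty)}\,t^{-p/N}$, and because $p<N$ the function $t^{-p/N}$ is integrable near $0$, with $\int_0^{|F|}t^{-p/N}\dt=\frac{N}{N-p}|F|^{\frac{N-p}{N}}$. Hence
\[
\int_F|g|\dx\le\frac{N}{N-p}\,|g|_{(\frac Np,\infty)}\,|F|^{\frac{N-p}{N}}.
\]

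Next I would convert the power of $|F|$ into capacity. Proposition \ref{propcap}(d) gives $|F|\le C\,\cp(F)^{\frac{N}{N-p}}$ with $\cp(F)=\cp(F,\R^N)$, so $|F|^{\frac{N-p}{N}}\le C^{\frac{N-p}{N}}\cp(F,\R^N)$; and since $\Om\subseteq\R^N$, Proposition \ref{propcap}(a) yields $\cp(F,\R^N)\le\cp(F,\Om)$. Combining the last two displays,
\[
\int_F|g|\dx\le\frac{N}{N-p}\,C^{\frac{N-p}{N}}\,|g|_{(\frac Np,\infty)}\,\cp(F,\Om).
\]
Dividing by $\cp(F,\Om)$ and taking the supremum over all admissible $F$ gives $\norm{g}\le\frac{N}{N-p}C^{\frac{N-p}{N}}|g|_{(\frac Np,\infty)}$, which proves both the inclusion and its continuity.

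The computation is routine once the ingredients are in place; the only delicate point is the matching of the two capacities. The volume bound (d) is stated for the full-space capacity $\cp(F,\R^N)$, whereas the Maz'ya norm is built from the relative capacity $\cp(F,\Om)$, so one must check that the inequality runs in the right direction — it does, precisely by the domain monotonicity of Proposition \ref{propcap}(a). A second point worth flagging is that the integrability of $t^{-p/N}$ near $t=0$, which keeps the constant $\frac{N}{N-p}$ finite, is exactly where the standing hypothesis $p\in(1,N)$ enters.
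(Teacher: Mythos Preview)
Your proof is correct and follows the same route as the paper: bound $\int_F|g|$ by $\int_0^{|F|}g^*$ via Hardy--Littlewood, then convert $|F|^{(N-p)/N}$ into $\cp(F,\Om)$ through the isocapacitary inequality. The paper phrases the latter step via P\'olya--Szeg\H{o} and the explicit ball capacity (Proposition~\ref{propcap}(e)) rather than your combination of Proposition~\ref{propcap}(d) and (a), and lands on the norm $\|g\|_{(N/p,\infty)}$ (through $g^{**}$) instead of your quasi-norm $|g|_{(N/p,\infty)}$, but these are cosmetic differences.
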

  \begin{proof}
  Observe that, $\cp(F^{\star}) \leq \cp(F^{\star}, \Om^{\star}) \leq  \cp(F, \Om) $. The first inequality comes from $(a)$-th property of Proposition \ref{propcap} and the latter one follows from Polya-Szego inequality.
  $\cp(F^{\star})= N \om_N (\frac{N-p}{p-1})^{p-1}R^{N-p},$ where $R$ is the radius of $F^{\star}$ (by $(e)$-th property of Proposition \ref{propcap}).
  Now, for a relatively compact set $F$,
  \begin{eqnarray*}
  \frac{\int_F |g|(x)\ dx}{\cp(F, \Om)} &\leq& \frac{\int_{F^{\star}} g^{\star}(x) \ dx}{\cp(F^{\star}, \R^N)} = \frac{\int_0^{|F|} g^*(t) \ dt}{N \om_N (\frac{N-p}{p-1})^{p-1}R^{N-p}} =
  \frac{R^p g^{**}(\om_N R^{N})}{N (\frac{N-p}{p-1})^{p-1}}.
  \end{eqnarray*}
  By setting $\om_N R^{N} =t$ we get,
  \begin{eqnarray*}
  \frac{\int_F |g|(x) \ dx}{\cp(F, \Om)} \leq C(N,p) \norm{g}_{(\frac{N}{p}, \infty)}.
  \end{eqnarray*}
  Now take the supremum over $F \cset \Om$ to obtain, 
 \[\norm{g} \leq C(N,p)  \norm{g}_{(\frac{N}{p}, \infty)} \text{ with } C(N,p)=\frac{1}{N (\om_N)^{\frac{p}{N}} (\frac{N-p}{p-1})^{p-1}}.\]
  \end{proof}

\begin{proposition} \label{Iomega}
  For  $p \in (1,N)$ and an open subset $\Om$  in  $\R^N$, $I(\Om)$ is continuously embedded into $\H$.
  \end{proposition}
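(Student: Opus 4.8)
The plan is to establish the embedding by showing directly that every $g \in I(\Om)$ satisfies the Hardy--Sobolev inequality \eqref{HS} with a constant controlled by $\norm{g}_I$; precisely, I would prove
\[\int_\Om |g|\,|u|^p\,dx \le C(N,p)\,\norm{g}_I \int_\Om |\nabla u|^p\,dx, \qquad \forall\, u \in \Dp.\]
This gives $g \in \H$ with $B_g \le C(N,p)\norm{g}_I$, and then the left inequality in \eqref{estimate}, namely $\norm{g} \le B_g$, yields $\norm{g} \le C(N,p)\norm{g}_I$, which is the asserted continuous embedding. I want to stress at the outset that the Schwarz symmetrization argument of the previous proposition is too wasteful here: replacing $\cp(F,\Om)$ by $\cp(F^\star)$ discards the genuinely radial information carried by $\tilde{g}$ and fails already on thin spherical shells, whose $p$-capacity is comparable to that of the ball they bound rather than to $R^{N-p}$ with $\omega_N R^N = |F|$. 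Instead I would reduce to a one-dimensional weighted Hardy inequality along rays.

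For the reduction, by density it suffices to treat $u \in C_c^\infty(\Om)$, extended by zero to $\R^N$. Since $\tilde{g}(r) = \operatorname{ess\,sup}\{|g(y)| : |y|=r\}$, Fubini's theorem gives $|g(x)| \le \tilde{g}(|x|)$ for a.e.\ $x$, so that $\int_\Om |g||u|^p\,dx \le \int_{\R^N}\tilde{g}(|x|)|u|^p\,dx$. Passing to polar coordinates $x = r\om$ and using that the radial derivative satisfies $|\partial_r u| \le |\nabla u|$, it is enough to prove, for each fixed direction $\om$ with $|\om|=1$ and $v(r) := u(r\om)$ (which vanishes for large $r$),
\[\int_0^\infty \tilde{g}(r)\,|v(r)|^p\, r^{N-1}\,dr \le C(N,p)\,\norm{g}_I \int_0^\infty |v'(r)|^p\, r^{N-1}\,dr,\]
and then to integrate over the unit sphere, recognizing $\int_{\R^N}|\partial_r u|^p\,dx \le \int_{\R^N}|\nabla u|^p\,dx$ on the right-hand side.

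The heart of the matter is this scalar inequality, with weights $U(r) = \tilde{g}(r)\, r^{N-1}$ and $V(r) = r^{N-1}$. Since $v(r) = \int_r^\infty (-v'(s))\,ds$, I would invoke Muckenhoupt's criterion for the dual Hardy operator: the inequality holds, with constant comparable to $A^p$, provided $A := \sup_{r>0}\left(\int_0^r U\right)^{1/p}\left(\int_r^\infty V^{1-p'}\right)^{1/p'} < \infty$, where $p' = p/(p-1)$. Here $\int_r^\infty s^{-(N-1)/(p-1)}\,ds$ converges exactly because $N>p$ and equals $c\, r^{(p-N)/(p-1)}$, which gives $A^p \le c\,\sup_{r>0} r^{\,p-N}\!\int_0^r \tilde{g}(s)\, s^{N-1}\,ds$. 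The decisive step is then the elementary bound $s^{N-p} \le r^{N-p}$ for $0 < s \le r$ (again using $N>p$), which yields $r^{\,p-N}\!\int_0^r \tilde{g}(s)\,s^{N-1}\,ds = r^{\,p-N}\!\int_0^r \tilde{g}(s)\,s^{p-1}s^{N-p}\,ds \le \int_0^r \tilde{g}(s)\,s^{p-1}\,ds \le \norm{g}_I$, so that $A^p \le c\,\norm{g}_I$. Assembling the three displays completes the argument. I expect the only genuinely delicate points to be the precise verification of the Muckenhoupt condition and the bookkeeping of $C(N,p)$; the measurability of $\tilde{g}$ and the polar-coordinate manipulations are routine.
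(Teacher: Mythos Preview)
Your proposal is correct and follows the same overall route as the paper: establish the Hardy--Sobolev inequality $\int_\Om |g||u|^p\,dx \le C\norm{g}_I\int_\Om |\nabla u|^p\,dx$ and then deduce $\norm{g}\le C\norm{g}_I$ from \eqref{estimate}. The paper simply invokes Lemma~2.1 of \cite{ADS-exterior} for this inequality, whereas you unpack it explicitly via polar coordinates and Muckenhoupt's criterion; your self-contained argument is sound, and the key monotonicity step $s^{N-p}\le r^{N-p}$ is exactly what makes the Muckenhoupt constant finite.
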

  \begin{proof}
   For $g \in I(\Om)$ and $ u \in \mathcal{N}(F)$, use Lemma 2.1 of \cite{ADS-exterior} to obtain
\[ \int_{F} |g|\ dx \leq  \int_{\Omega} |g||u|^p dx \leq C_H \norm{g}_I \int_{\Omega} |\nabla u|^p dx , \ \forall u \in \Dp,\]
  where $C$ depends only on $N, p.$ Taking the infimum over $\mathcal{N}(F)$ and then the supremum over $F$ we obtain $\norm{g} \leq C_H \norm{g}_I$.
  \end{proof}
  
  \begin{rmk} \rm
  Notice that $I(\Om)$ and $\H$ are not rearrangement invariant Banach function spaces. For example,  for $p=2$, $N \geq 3$ and 
$\be \in (\frac{2}{N},1)$ consider the following function analogous to Example 3.8 of \cite{biharmonic}, 
\begin{equation*}
      g(x)= \left\{
          \begin{array}{ll}
          (|x|-1)^{-\be}, & 1<|x|\leq 2, \\
                                                         0 , & {\mbox{otherwise}}.
                                                          \end{array}\right .
  \end{equation*}
 It can be verified that $g \in I(\R^N)$ and $g^{\star}\notin \mathcal{H}(\R^N)$.
  \end{rmk}
  
  As we have mentioned before, if $g \in \F_{\frac{N}{p}}(\Om)$ then $G$ is compact and the same is true if $g \in I(\Om)$, where $\Om = \overline{B_1}^c$. Next proposition shows that $\F(\Om)$ contains these spaces.
\begin{proposition}
  Let  $p \in (1,N)$. Then
  \begin{enumerate}[(i)]
      \item  $\F_{\frac{N}{p}}(\Om) \subseteq \F(\Om)$ for any open subset $\Om$  in  $\R^N$.
      \item $I(\Om)\subseteq \F(\Om)$ for $\Om= B_d \setminus \overline{B_c}$; $0 \leq c <d \leq \infty$.
  \end{enumerate}
 \end{proposition}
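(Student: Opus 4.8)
The plan is to derive both inclusions from the continuous embeddings into $\H$ already established in this section, using that $\F(\Om)$ and $\F_{\frac{N}{p}}(\Om)$ are, by definition, closures of the \emph{same} set $C_c^{\infty}(\Om)$ — taken in the norm of $\H$ and of $L^{\frac{N}{p},\infty}(\Om)$, respectively. The guiding principle is that a continuous inclusion $X\hookrightarrow Y$ carries the closure of a set $A$ in $X$ into the closure of $A$ in $Y$; applied with $A=C_c^{\infty}(\Om)$, each embedding immediately yields the corresponding statement about the $\F$-spaces.

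For part (i), I would take $g\in\F_{\frac{N}{p}}(\Om)$ and a sequence $\phi_n\in C_c^{\infty}(\Om)$ with $\norm{g-\phi_n}_{(\frac{N}{p},\infty)}\to0$. The embedding $L^{\frac{N}{p},\infty}(\Om)\hookrightarrow\H$ gives $\norm{g-\phi_n}\le C(N,p)\norm{g-\phi_n}_{(\frac{N}{p},\infty)}$, so the very same sequence converges to $g$ in the norm of $\H$. Since $g\in L^{\frac{N}{p},\infty}(\Om)\subseteq\H$ and each $\phi_n\in C_c^{\infty}(\Om)$, this exhibits $g$ as an $\H$-limit of functions in $C_c^{\infty}(\Om)$, i.e.\ $g\in\F(\Om)$, proving (i).

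For part (ii) the subtlety is that $C_c^{\infty}(\Om)$ is \emph{not} directly dense in $(I(\Om),\norm{\cdot}_I)$, because $\norm{\cdot}_I$ controls the radial essential supremum $\tilde{g}(r)={\rm ess}\sup\{|g(y)|:|y|=r\}$, an $L^{\infty}$-type quantity on each sphere that does not interact well with convolution. I would therefore split the approximation into two steps invoking two different embeddings. \textbf{Truncation.} Set $g_n=g\,\chi_{\{c+\frac1n\le|x|\le\min(d,n)\}}\,\chi_{\{|g|\le n\}}$. Then $|g_n|\le|g|$, so $g_n\in I(\Om)$, and since $\tilde{g}(r)<\infty$ for a.e.\ $r$ one checks $\widetilde{g-g_n}(r)\to0$ pointwise a.e.\ while $\widetilde{g-g_n}(r)\le\tilde{g}(r)\in L^1((0,\infty),r^{p-1}\dr)$; dominated convergence gives $\norm{g-g_n}_I\to0$, hence $\norm{g-g_n}\le C_H\norm{g-g_n}_I\to0$ by Proposition \ref{Iomega}. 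Because $\Om=B_d\setminus\overline{B_c}$ is an annular region, each $g_n$ is bounded and supported in a compact annulus $A_n\cset\Om$. \textbf{Mollification.} For a standard mollifier $\rho_\eps$, the functions $g_n^\eps=g_n*\rho_\eps$ lie in $C_c^{\infty}(\Om)$ for $\eps$ small and converge to $g_n$ in every $L^q(\R^N)$. Fixing $q>\frac{N}{p}$ and noting that all differences $g_n^\eps-g_n$ are supported in one fixed compact set $K_n\cset\Om$, the elementary bound $f^*(t)\le t^{-1/q}\norm{f}_{L^q}$ gives $\sup_{t>0}t^{\frac{p}{N}}(g_n^\eps-g_n)^*(t)\le|K_n|^{\frac{p}{N}-\frac1q}\norm{g_n^\eps-g_n}_{L^q}\to0$; by equivalence of the Lorentz quasi-norm and norm this means $\norm{g_n^\eps-g_n}_{(\frac{N}{p},\infty)}\to0$, and the embedding $L^{\frac{N}{p},\infty}(\Om)\hookrightarrow\H$ converts it into $\norm{g_n^\eps-g_n}\to0$ as $\eps\to0$. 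A diagonal choice $\eps_n\to0$ then produces $\phi_n:=g_n^{\eps_n}\in C_c^{\infty}(\Om)$ with $\norm{g-\phi_n}\to0$, so $g\in\F(\Om)$.

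The main obstacle is precisely the mollification step: the natural norm on $I(\Om)$ is too strong (a supremum over each sphere) for convolution to converge, so the point is to retreat to the weaker norm of $\H$. The resolution is to first reduce, by truncation, to bounded functions of compact support — which automatically belong to $L^{\frac{N}{p},\infty}(\Om)$ — and then to approximate them \emph{there}, exploiting that on a fixed bounded support $L^q$-convergence with $q>\frac{N}{p}$ forces $L^{\frac{N}{p},\infty}$-convergence. The annular shape of $\Om$ is used exactly once but essentially: it guarantees that truncating $g$ between two radii yields a function whose support is compactly contained in $\Om$, so that its mollification remains in $C_c^{\infty}(\Om)$.
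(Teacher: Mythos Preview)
Your proof of (i) is identical to the paper's: both simply observe that the embedding $L^{\frac{N}{p},\infty}(\Om)\hookrightarrow\H$ carries the $L^{\frac{N}{p},\infty}$-closure of $C_c^{\infty}(\Om)$ into its $\H$-closure.

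For (ii) the approaches diverge. The paper argues that $C_c^{\infty}(\Om)$ is already dense in $(I(\Om),\norm{\cdot}_I)$: it approximates the one-dimensional profile $\tilde g$ in $L^1((c,d),r^{p-1}\dr)$ by some $\phi\in C_c^{\infty}((c,d))$, sets $\sigma(x)=\phi(|x|)$, and asserts $\widetilde{g-\sigma}(r)=\tilde g(r)-\phi(r)$, whence $\norm{g-\sigma}_I<\epsilon$. This is shorter and uses only Proposition~\ref{Iomega}. Your worry about the spherical essential supremum is, however, well founded: the identity $\widetilde{g-\sigma}(r)=\tilde g(r)-\phi(r)$ need not hold for non-radial $g$ (e.g.\ if $g$ has a jump across each sphere then no continuous $\sigma$ makes $\widetilde{g-\sigma}$ small, so $C_c^{\infty}(\Om)$ is in fact \emph{not} dense in $(I(\Om),\norm{\cdot}_I)$ in general). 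Your two-step route---truncate in $\norm{\cdot}_I$ to a bounded function of compact support, then mollify that piece via the embedding $L^{\frac{N}{p},\infty}(\Om)\hookrightarrow\H$---sidesteps this entirely, at the cost of invoking two embeddings instead of one; it is the more robust argument.

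One small repair is needed: when $d<\infty$ your radial window $\{c+\tfrac1n\le|x|\le\min(d,n)\}$ reaches the outer boundary $|x|=d$ for large $n$, so $g_n$ is not compactly supported in $\Om$. Replacing $\min(d,n)$ by $d-\tfrac1n$ when $d<\infty$ fixes this, and the dominated-convergence and mollification steps go through verbatim.
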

 \begin{proof}
Recall that, $\F_{\frac{N}{p}}(\Om)$ is the closure of $C_c^{\infty}(\Om)$ in $L^{{\frac{N}{p}},\infty}(\Om)$ and $\F(\Om)$ is closure of $C_c^{\infty}(\Om)$ in $\H.$ Now since $\norm{.} \leq C \norm{.}_{({\frac{N}{p}},\infty)}$, 
it is immediate that $\F_{\frac{N}{p}}(\Om)$ is contained in $\F(\Om)$. 
Similarly, in order to prove $(ii)$, it is enough to show $C_c^{\infty}(\Om)$ is dense in $I(\Om)$. For this, let $g \in I(\Om)$ and $\epsilon >0$ be arbitrary.
 As $C_c^{\infty} ((c, d))$ is dense in $L^1((c, d), r^{p-1})$,
there exists $ \phi \in C_c^{\infty}((c, d))$ such that $\norm{\tilde{g} - \phi}_{L^1((c, d),r^{p-1})} < \epsilon $.
 Now, for $x \in \Om$ let $\si(x) := \phi(|x|) $.  By denoting, $h= g- \si$ we have, $ \tilde{h}(r) = \tilde{g}(r) - \phi(r)$.
 Therefore,
 \[\norm{g - \si}_I= \int_0^{\infty}  |\tilde{h}|(r) r^{p-1} dr = \norm{\tilde{g} - \phi}_{L^1((0,\infty),r^{p-1})} < \epsilon.\]
  \end{proof}
 \begin{rmk} \rm In \cite[Lemma 3.5]{anoop-p}, authors have shown that $\F_{\frac{N}{p}}(\Om)$ contains the Hardy potentials that have faster decay than $\frac{1}{|x-a|^p}$ at all points $a \in \overline{\Om}$ and at infinity. Such Hardy potentials arises in the work of Szulkin
and Willem \cite{Andrze}. Above proposition assures that they belong to $\F(\Om)$.
  \end{rmk} 
  
  Next, we give an alternate proof for the Lorentz-Sobolev embedding of $\Dp$. The idea is similar to that of Corollary 3.6 of \cite{biharmonic}.
  \begin{proposition} \label{embedding}
  For  $p \in (1,N)$ and an open subset $\Om$  in  $\R^N$,  $\Dp$ is continuously embedded in $L^{p^*,p}(\Om).$ 
\end{proposition}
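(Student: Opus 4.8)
The plan is to exploit the Banach function space machinery already developed in this section rather than to argue directly by symmetrization. Write $p^{*}=\frac{Np}{N-p}$ and note $\frac1{p^{*}}-\frac1p=-\frac1N$, so that for $u\in\Dp$ one has $|u|_{(p^{*},p)}^{p}=\int_{0}^{\infty}t^{-p/N}\big(u^{*}(t)\big)^{p}\,dt$. The starting observation is the elementary rearrangement identity $\big(|u|^{p}\big)^{*}(t)=\big(u^{*}(t)\big)^{p}$, valid because $s\mapsto s^{p}$ is increasing and $|u|^{*}=u^{*}$. Comparing the two integrals gives
\[
|u|_{(p^{*},p)}^{p}=\big|\,|u|^{p}\,\big|_{(\frac{p^{*}}{p},1)}.
\]
Thus it suffices to estimate the single Lorentz (quasi-)norm of $|u|^{p}$ in $L^{\frac{p^{*}}{p},1}(\Om)$ by $\int_{\Om}|\Gr u|^{p}\dx$; the power $p$ has been absorbed into the function, which is exactly what lets the degree-$p$ Maz'ya inequality enter.

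Next I would pass to the associate space. A direct computation of conjugate indices gives $\big(\frac{p^{*}}{p}\big)'=\frac{N}{p}$, so by Proposition~\ref{assspace} the associate space of $L^{\frac{p^{*}}{p},1}(\Om)$ is, up to equivalence of norms, $L^{\frac{N}{p},\infty}(\Om)$ (here $\frac{p^{*}}{p}=\frac{N}{N-p}\in(1,\infty)$, so the proposition applies). Since the Lorentz norm is equivalent to the associate-space norm, Definition~\ref{assspacedef} lets me write
\[
\big\|\,|u|^{p}\,\big\|_{(\frac{p^{*}}{p},1)}\;\asymp\;\sup\left\{\int_{\Om}|u|^{p}\,|f|\dx\;:\;f\in L^{\frac{N}{p},\infty}(\Om),\ \norm{f}_{(\frac{N}{p},\infty)}\le 1\right\}.
\]
For each admissible $f$, the embedding $L^{\frac{N}{p},\infty}(\Om)\hookrightarrow\H$ established above gives $|f|\in\H$ with $\norm{|f|}\le C(N,p)\,\norm{f}_{(\frac{N}{p},\infty)}\le C(N,p)$, and then Maz'ya's characterization (Theorem~\ref{Mazya's condition}) yields $\int_{\Om}|u|^{p}|f|\dx\le C_{H}\norm{|f|}\int_{\Om}|\Gr u|^{p}\dx\le C_{H}\,C(N,p)\int_{\Om}|\Gr u|^{p}\dx$. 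Taking the supremum over $f$ and combining with the two displays above gives $|u|_{(p^{*},p)}^{p}\le C\int_{\Om}|\Gr u|^{p}\dx$, i.e. $\norm{u}_{(p^{*},p)}\le C^{1/p}\norm{u}_{\D}$, which is the asserted continuous embedding. Note that no density argument is needed, since Theorem~\ref{Mazya's condition} already holds for every $u\in\Dp$ and the finiteness of $\norm{u}_{(p^{*},p)}$ comes out of the estimate itself.

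The conceptual content is concentrated in two places, neither of which is deep: the homogenization identity $|u|_{(p^{*},p)}^{p}=\big|\,|u|^{p}\,\big|_{(p^{*}/p,1)}$, which converts a degree-$p$ quantity into a \emph{linear} pairing, and the matching of the Maz'ya inequality with the Lorentz duality $(L^{N/p,\infty})'=L^{p^{*}/p,1}$. I expect the only technical point requiring care to be the bookkeeping of norm equivalences: $|\cdot|_{(p^{*}/p,1)}$ is a quasi-norm defined through $f^{*}$, whereas the associate-space duality of Proposition~\ref{assspace} and Definition~\ref{assspacedef} is naturally phrased with the genuine norm $\norm{\cdot}_{(p^{*}/p,1)}$ built from $f^{**}$; since these differ only by a multiplicative constant (Lemma~3.4.6 of \cite{EdEv}), this affects only the final constant $C$. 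I therefore anticipate constant-chasing, rather than any analytic difficulty, to be the main (and essentially only) obstacle. As a remark, the same conclusion can be reached classically via the P\'olya--Szeg\H{o} inequality followed by a one-dimensional weighted Hardy inequality on $(0,\infty)$, whose Muckenhoupt weight condition is verified by the exact cancellation of the exponents $\tfrac1p-\tfrac1N$ and $\tfrac1N-\tfrac1p$; but the duality route above is shorter and directly reuses the structural results of the paper.
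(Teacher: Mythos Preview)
Your argument is correct, and it takes a genuinely different route from the paper's. The paper fixes the single radial weight $g(x)=|x|^{-p}$, symmetrizes $u$ via P\'olya--Szeg\H{o}, and recognizes $\int_{\R^N} g^{\star}|u^{\star}|^{p}\,dx=\int_{0}^{\infty}s^{-p/N}(u^{*}(s))^{p}\,ds=|u|_{(p^{*},p)}^{p}$; one application of Theorem~\ref{Mazya's condition} to the symmetrized $u^{\star}$ then finishes. You instead absorb the exponent via $|u|_{(p^{*},p)}^{p}=\big|\,|u|^{p}\,\big|_{(p^{*}/p,1)}$ and realize the Lorentz norm through duality $(L^{N/p,\infty})'=L^{p^{*}/p,1}$, so Maz'ya's inequality is applied once for each test weight $f\in L^{N/p,\infty}$ and the supremum is taken. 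Your approach avoids symmetrization and the explicit computation of $\norm{g^{\star}}$, at the cost of importing Proposition~\ref{assspace} and tracking one more norm equivalence; the paper's approach is more explicit about the constant. It is worth noting that your argument is exactly the mechanism behind Theorem~\ref{opembedding}: there the paper takes the supremum over the unit ball of $\H$ (rather than of $L^{N/p,\infty}$) to land in $\E(\Om)$, and then uses the same duality $\H'\subsetneq L^{p^{*}/p,1}$ in part~(b) to recover $\E(\Om)\subsetneq L^{p^{*},p}(\Om)$. So your proof of Proposition~\ref{embedding} is essentially Theorem~\ref{opembedding}(a) restricted to the smaller class $L^{N/p,\infty}\subset\H$, which is a pleasant observation.
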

\begin{proof}
 Without loss of generality we may assume $\Om = \R^N$ (for a  general domain $\Om$, the result will follow by considering the zero extension to $\R^N$). Let $g \in \M $ be such that $g^{\star} \in \M. $ 
 Then using the Polya-Szego inequality we have,
  \[ \int_{\R^N} g^{\star} |u^{\star}|^p \ dx \leq C_H \norm{g^{\star}} \int_{\R^N} |\nabla u^{\star}|^p \ dx \leq C_H \norm{g^{\star}} \int_{\R^N} |\nabla u|^p \ dx , \ \forall u \in \D^{1,p}_0(\R^N).\]
    In particular, for $g(x) = \frac{1}{\om_N^{\frac{p}{N}} |x|^p}$, $g^*(s) = \frac{1}{s^{\frac{p}{N}}}$ and $\norm{g^{\star}} = \frac{(p-1)^{p-1}}{N (N-p)^{p-1}} $.
    Now $\int_{\R^N}g^{\star} |u^{\star}|^p \ dx = \int_0^{\infty} g^*(s) |u^*(s)|^p \ ds $. Thus
    from the above inequality we obtain, 
    \[\int_0^{\infty} \frac{|u^*(s)|^p}{s^{\frac{p}{N}}} \ ds \leq C(N,p)  \int_{\R^N} |\nabla u|^p \ dx, \, \forall\, u\in \Dp. \]  
    The left hand side of the above inequality is $ |u|_{(p^*,p)}^p$, a quasi-norm  equivalent to the norm $\norm{u}^p_{(p^{*},p)}$ in $L^{p^*,p}(\Om)$. This completes the proof.  
    \end{proof}
 \begin{corollary} \label{cpctembedding}
      For  $p \in (1,N)$ and an open subset $\Om$  in  $\R^N$, $\Dp$ is compactly embedded in $L^{p}_{loc}(\Om).$
    \end{corollary}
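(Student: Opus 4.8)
The plan is to show that the canonical inclusion $\iota:\Dp \to L^p_{loc}(\Om)$ sends bounded sets to relatively compact sets; equivalently, that every sequence $(u_n)$ bounded in $\Dp$ admits a subsequence that converges in $L^p(K)$ for every compact $K \cset \Om$. First I would exploit the continuous embedding just established in Proposition~\ref{embedding}: since $\Dp \hookrightarrow L^{p^*,p}(\Om)$ and, by the nesting of Lorentz spaces in the second index, $L^{p^*,p}(\Om) \hookrightarrow L^{p^*,p^*}(\Om) = L^{p^*}(\Om)$ (because $p<p^*$), a sequence $(u_n)$ bounded in $\Dp$ is bounded in $L^{p^*}(\Om)$, while its gradients $\nabla u_n$ are bounded in $L^p(\Om)$ directly from the definition of $\norm{\cdot}_{\D}$.

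Next I would localise. I exhaust $\Om$ by an increasing family of relatively compact open sets $\Om_k \cset \Om$ with Lipschitz boundary and $\bigcup_k \Om_k = \Om$ (for instance a suitable increasing sequence of finite unions of balls). On each fixed $\Om_k$ the sequence $(u_n)$ is bounded in $W^{1,p}(\Om_k)$: the gradient bound is immediate, and since $|\Om_k| < \infty$, Hölder's inequality gives $\norm{u_n}_{L^p(\Om_k)} \le |\Om_k|^{\frac1p - \frac{1}{p^*}}\norm{u_n}_{L^{p^*}(\Om_k)} \le C_k$. Because $\Om_k$ is bounded with Lipschitz boundary, the classical Rellich--Kondrachov theorem yields the compact embedding $W^{1,p}(\Om_k) \hookrightarrow\hookrightarrow L^p(\Om_k)$, so $(u_n)$ has a subsequence converging strongly in $L^p(\Om_k)$.

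Finally I would run a diagonal extraction over $k$: choose nested subsequences, the $k$-th of which converges in $L^p(\Om_k)$, and pass to the diagonal sequence, which then converges in $L^p(\Om_k)$ for every $k$. Since every compact $K \cset \Om$ is contained in some $\Om_k$, the diagonal subsequence converges in $L^p(K)$ for all compact $K$, which is exactly convergence in $L^p_{loc}(\Om)$. This proves that $\iota$ is compact.

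The Hölder estimate and the diagonal argument are routine; the single point that genuinely uses the structure of the problem is the reduction to $W^{1,p}$ on the relatively compact pieces. Because $\Om$ may be unbounded and irregular, there is no global control of $\norm{u_n}_{L^p(\Om)}$ available, only the critical bound in $L^{p^*}(\Om)$ supplied by Proposition~\ref{embedding}; hence the finiteness of $|\Om_k|$ is essential to recover a local $L^p$ bound and thus a genuine $W^{1,p}(\Om_k)$ bound. The remaining care is simply to select the exhausting sets $\Om_k$ regular enough (e.g.\ with Lipschitz boundary) that Rellich--Kondrachov applies verbatim.
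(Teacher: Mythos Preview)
Your proof is correct and follows essentially the same route as the paper's. The paper condenses your argument into two lines: it asserts that $\Dp$ embeds continuously into $W^{1,p}_{loc}(\Om)$ (which is precisely what your use of Proposition~\ref{embedding} together with H\"older on each $\Om_k$ establishes) and then invokes the classical compact embedding $W^{1,p}_{loc}(\Om)\hookrightarrow L^p_{loc}(\Om)$, leaving the exhaustion and diagonal extraction implicit.
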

\begin{proof}
  Clearly $\Dp$ is continuously embedded into $W^{1,p}_{loc}(\Om)$. Since $W^{1,p}_{loc}(\Om)$ is compactly embedded in $L^{p}_{loc}(\Om),$ we have the required embedding.
\end{proof}

Notice that we used just one Hardy potential $\frac{1}{|x|^p}$ to obtain the Lorentz-Sobolev embedding in Proposition \ref{embedding}. Instead, if we consider all of $\H$, then we anticipate to get an embedding finer than 
the above one. For this, we consider the following space (defined in a similar way as the associate space):
  \[\E(\Om):= \left \{ u\in \M : |u|^p  \in \H' \right \} .\]
 One can verify that $\E(\Om)$ is a Banach function space with respect to the norm 
  \begin{eqnarray*}
   \norm{u}_{\E}:= \left( \norm{|u|^p}' \right)^{\frac{1}{p}}.
  \end{eqnarray*}

 In the next theorem, we establish an embedding of $\Dp$ into $\E(\Om)$. Further, we assert that the embedding is finer than the classical one.
  \begin{thm} \label{opembedding}
 Let $1<p<N$ and $\Om$ be open in $\R^N$. Then
  \begin{enumerate}[(a)]
      \item $\Dp$ is continuously embedded into $\E(\Om)$,
      \item $\E(\Om) $ is a proper subspace of $L^{p^*,p}(\Om). $ 
  \end{enumerate} 
  \end{thm}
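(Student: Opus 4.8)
The plan is to read off both parts from Maz'ya's inequality (Theorem~\ref{Mazya's condition}) and the Lorentz-scale duality of $\H$, transported through the map $u\mapsto|u|^p$.

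For part $(a)$ I would argue directly from the definition of the associate norm. Fix $u\in\Dp$ and any $g\in\H$ with $\norm{g}\le 1$. Since $|g|\in\H$ with the same norm, Theorem~\ref{Mazya's condition} gives
\[
\int_\Om |g|\,|u|^p\,\dx \;\le\; C_H\norm{g}\int_\Om|\nabla u|^p\,\dx \;\le\; C_H\,\norm{u}_{\D}^p .
\]
Taking the supremum over all such $g$ yields $\norm{|u|^p}'\le C_H\norm{u}_{\D}^p$, so that $|u|^p\in\H'$, i.e.\ $u\in\E(\Om)$, with $\norm{u}_{\E}\le C_H^{1/p}\norm{u}_{\D}$. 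This is exactly the continuous embedding in $(a)$; only Maz'ya's constant is used, no approximation.

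For part $(b)$ I would first obtain $\E(\Om)\subseteq L^{p^*,p}(\Om)$ by dualizing the embedding $L^{\frac{N}{p},\infty}(\Om)\hookrightarrow\H$ established at the start of this section. Passing to associate spaces reverses the inclusion, and Proposition~\ref{assspace} identifies $\big(L^{\frac{N}{p},\infty}\big)'=L^{\frac{N}{N-p},1}$; hence $\H'\hookrightarrow L^{\frac{N}{N-p},1}(\Om)$. It then remains to pull this back through $u\mapsto|u|^p$: the elementary identity $(|u|^p)^*=(u^*)^p$ together with $\tfrac{p^*}{p}=\tfrac{N}{N-p}$ shows that $|u|^p\in L^{\frac{N}{N-p},1}(\Om)$ precisely when $u\in L^{p^*,p}(\Om)$, with comparable (quasi)norms. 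Thus $u\in\E(\Om)\Rightarrow|u|^p\in\H'\subseteq L^{\frac{N}{N-p},1}\Rightarrow u\in L^{p^*,p}(\Om)$, and the embedding is continuous.

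For the properness, the clean route is the Lorentz--Luxemburg identity $\H''=\H$, which holds because $\H$ is a Banach function space with the Fatou property built into Proposition~\ref{BanachFS}. Were $\H'=L^{\frac{N}{N-p},1}$, taking associates would force $\H=\H''=\big(L^{\frac{N}{N-p},1}\big)'=L^{\frac{N}{p},\infty}$, contradicting the fact that $\H$ strictly contains $L^{\frac{N}{p},\infty}$ (Hardy potentials lying outside $L^{\frac{N}{p},\infty}$ do exist, cf.\ \cite{ADS-exterior}). Hence $\H'\subsetneq L^{\frac{N}{N-p},1}$; choosing a nonnegative $v_0\in L^{\frac{N}{N-p},1}\setminus\H'$ and setting $u_0=v_0^{1/p}$ produces $u_0\in L^{p^*,p}(\Om)\setminus\E(\Om)$. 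I expect the real obstacle to sit exactly here: exhibiting (or invoking) a concrete Hardy potential outside $L^{\frac{N}{p},\infty}$ valid for a general open $\Om$, since the known examples are tailored to exterior domains. Everything else is bookkeeping with associate norms and rearrangements.
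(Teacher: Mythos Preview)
Your proposal is correct and follows essentially the same route as the paper: part~(a) is Maz'ya's inequality plus a supremum over the unit ball of $\H$, and part~(b) is the duality argument $L^{\frac{N}{p},\infty}(\Om)\subsetneq\H\Rightarrow\H'\subsetneq L^{\frac{p^*}{p},1}(\Om)$ transported through $u\mapsto|u|^p$. Your justification of the strict inclusion via the Lorentz--Luxemburg identity $\H''=\H$ is in fact more explicit than the paper's one-line assertion; the concern you raise about exhibiting a Hardy potential outside $L^{\frac{N}{p},\infty}$ for a \emph{general} open $\Om$ is shared by the paper's proof, which simply asserts $L^{\frac{N}{p},\infty}(\Om)\subsetneq\H$ without further comment.
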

  \begin{proof}
  \begin{enumerate}[(a)]
      \item For $g \in \H$, by Theorem \ref{Mazya's condition},
\[ \int_{\Om} g|u|^p \ dx \leq C_H \norm{g} \int_{\Om} |\nabla u|^p \ dx, \forall u \in \Dp.\]
  Now taking the supremum over  the unit ball in $\H$ we obtain,
  \[ \norm{u} _{\E} \leq C_H^{\frac{1}{p}} \norm{u}_{\Dp}, \forall u \in \Dp.\]
  \item Clearly $v \in \E(\Om)$ if and only if $|v|^p \in \H'$. Further, $L^{{\frac{N}{p}},\infty}(\Om) \subsetneq \H$ and hence $\H' \subsetneq L^{\frac{p^*}{p},1}(\Om)$ (by Proposition \ref{assspace}). 
Now, we can easily deduce  that $\E(\Om) \subsetneq L^{p^*,p}(\Om).$
  \end{enumerate}
  \end{proof}

\section{The compactness} \label{Compactness}

In this section, we develop a $g$ depended concentration compactness lemma as in \cite{Smets}.  Then we give  equivalent conditions for compactness and prove Theorem \ref{eqivthm}, Theorem \ref{eqivthm1} and Theorem \ref{eqivthm2}. 
\begin{lem} \label{uineq}
  Let $\Phi \in C_b^1(\Om)$ be such that $\nabla \Phi$ has compact support and $u_n \wra u$ in $\Dp$. Then
  \[ \overline{\lim_{n \ra \infty}} \int_{\Om} |\nabla((u_n -  u)\Phi)|^p \ dx= \overline{\lim_{n \ra \infty}} \int_{\Om} |\nabla(u_n -  u)|^p |\Phi|^p \ dx.\]
 \end{lem}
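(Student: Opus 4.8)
The plan is to reduce everything to a single pointwise algebraic inequality combined with the compact embedding $\Dp \embd\embd L^p_{loc}(\Om)$ from Corollary \ref{cpctembedding}. Write $v_n := u_n - u$, so that $v_n \wra 0$ in $\Dp$; in particular $(v_n)$ is bounded in $\Dp$, say $\sup_n \int_\Om |\nabla v_n|^p \dx =: M < \infty$. The product rule gives $\nabla(v_n \Phi) = \Phi \nabla v_n + v_n \nabla \Phi$ a.e., so the claim amounts to showing that the integrals of $|\Phi \nabla v_n + v_n \nabla\Phi|^p$ and of $|\Phi|^p |\nabla v_n|^p$ share the same $\limsup$. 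Since $\limsup a_n = \limsup b_n$ whenever $a_n - b_n \to 0$, it suffices to prove
\[ \int_\Om \Big| |\nabla(v_n\Phi)|^p - |\Phi|^p |\nabla v_n|^p \Big| \dx \longrightarrow 0. \]

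First I would invoke the elementary inequality \eqref{inequality} with the vectors $a = \Phi \nabla v_n$ and $b = v_n \nabla\Phi$ (noting $a+b = \nabla(v_n\Phi)$ and $|a| = |\Phi|\,|\nabla v_n|$). Although \eqref{inequality} is stated for scalars, it transfers verbatim to the Euclidean norm on $\R^N$: setting $\alpha = |a|$, $\gamma = |a+b|$, the triangle inequality gives $|\gamma - \alpha| \le |b|$, and applying the scalar version to $\alpha, \gamma$ yields the vector form. Thus, for each $\eps > 0$, pointwise a.e.,
\[ \Big| |\nabla(v_n\Phi)|^p - |\Phi|^p |\nabla v_n|^p \Big| \le \eps\, |\Phi|^p |\nabla v_n|^p + C(\eps,p)\, |v_n|^p |\nabla\Phi|^p. \]
Integrating and using $\|\Phi\|_\infty < \infty$ (as $\Phi \in C_b^1(\Om)$) bounds the first term on the right by $\eps \|\Phi\|_\infty^p M$, which is uniformly small in $n$.

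The crucial second term is where the hypothesis on $\Phi$ enters. Since $\nabla\Phi$ has compact support, set $K := \mathrm{supp}(\nabla\Phi) \cset \Om$; then
\[ \int_\Om |v_n|^p |\nabla\Phi|^p \dx \le \|\nabla\Phi\|_\infty^p \int_K |v_n|^p \dx. \]
By Corollary \ref{cpctembedding} the embedding $\Dp \embd\embd L^p_{loc}(\Om)$ is compact, so the weak convergence $v_n \wra 0$ upgrades to $v_n \to 0$ strongly in $L^p(K)$; hence $\int_K |v_n|^p \dx \to 0$ and the second term vanishes as $n \to \infty$. Combining the two estimates gives $\limsup_n \int_\Om \big| |\nabla(v_n\Phi)|^p - |\Phi|^p|\nabla v_n|^p\big| \dx \le \eps \|\Phi\|_\infty^p M$, and letting $\eps \to 0$ closes the argument.

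I expect the only genuine obstacle to be the compactness step: one must ensure the cross term $v_n\nabla\Phi$ truly disappears, and this is exactly what forces the assumption that $\nabla\Phi$ has compact support, so that $v_n$ can be controlled in $L^p$ on a relatively compact set where strong convergence is available. Everything else is bookkeeping with \eqref{inequality} and the uniform bound $M$.
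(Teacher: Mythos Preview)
Your proof is correct and follows essentially the same route as the paper: apply the pointwise inequality \eqref{inequality} with $a=\Phi\nabla(u_n-u)$ and $b=(u_n-u)\nabla\Phi$, then kill the second term via the compact embedding of Corollary~\ref{cpctembedding} on the compact support of $\nabla\Phi$ and absorb the first using the uniform bound on $\|\nabla(u_n-u)\|_p$ before sending $\eps\to 0$. Your write-up is in fact more explicit (you justify the vector version of \eqref{inequality} and state the reduction $a_n-b_n\to 0 \Rightarrow \limsup a_n=\limsup b_n$), but the underlying argument is identical.
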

 \begin{proof}
  Let $\epsilon >0$ be given. Using \eqref{inequality},
   \begin{eqnarray*}
      \bigg| \int_{\Om} |\nabla((u_n  &-&  u)\Phi)|^p\ dx - \int_{\Om} |\nabla(u_n -  u)|^p |\Phi|^p \ dx\bigg|  \\
              &\leq&   \epsilon \int_{\Om} |\nabla(u_n -  u)|^p |\Phi|^p\ dx + C(\epsilon,p) \int_{\Om} |u_n -  u|^p |\nabla \Phi|^p\ dx. 
   \end{eqnarray*}
 Since $\nabla \Phi$ is compactly supported, by Corollary \ref{cpctembedding} the second term in the right-hand side of the above inequality goes to 0 as $n \ra \infty$. Further, as $(u_n)$ is bounded in $\Dp$ and $\epsilon>0$ 
 is arbitrary, we obtain the desired result.
 \end{proof}

A function in $\Dp$ can be considered as a function in $\mathcal{D}^{1,p}_0(\R^N)$ by usual zero extension. Following this convention,  for $u_n, u\in \Dp$ and a Borel set  $ E $ in   $\R^N,$ we  denote \[ \nu_n(E)=\int_E g|u_n - u|^p \ dx;\qquad \Ga_n(E)=\int_E|\nabla(u_n-u)|^p\ dx;\qquad \widetilde{\Ga}_n(E)=\int_E|\nabla u_n|^p\ dx.\] If $u_n\wra u$ in $\Dp$, then $\nu_n$, $\Gamma_n$ and $\widetilde{\Gamma}_n$
  have weak* convergent sub-sequences (Proposition \ref{BanachAlaoglu}). Let
   \[\nu_n \wrastar \nu; \qquad \Gamma_n \overset{\ast}{\rightharpoonup} \Gamma; \qquad \widetilde{\Gamma}_n \overset{\ast}{\rightharpoonup} \widetilde{\Gamma} \  \mbox{ in } \mathbb{M}(\R^N).\]

 Next, we prove the absolute continuity of the measure $\nu$ with respect to $\Gamma$. 
 \begin{lem}\label{mlc1}
 Let $g \in \H $, $g \geq 0$ and $u_n \wra u$ in $ \Dp$.  Then  for any Borel set $E$ in $\R^N$,
   \[\nu(E) \leq C_H \ \C^*_g \Gamma (E), \ \mbox{where $\C^*_g = \displaystyle\sup_{x \in \overline{ \Om}} \C_g(x)$ } .\]
  \end{lem}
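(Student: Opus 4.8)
The plan is to upgrade the pointwise control encoded in the concentration function into a global comparison of the two limiting measures: I will show that $\nu \le C_H\C^*_g\,\Gamma$ as measures on $\R^N$, from which the stated bound $\nu(E)\le C_H\C^*_g\,\Gamma(E)$ follows at once for every Borel set $E$. First note that $\nu$ and $\Gamma$ are bounded \emph{positive} measures: $\Gamma_n(\R^N)=\norm{u_n-u}_{\Dp}^p$ is bounded, and $\nu_n(\R^N)=\int_\Om g|u_n-u|^p\dx\le C_H\norm{g}\norm{u_n-u}_{\Dp}^p$ is bounded by Theorem \ref{Mazya's condition}, so the weak* limits exist (Proposition \ref{BanachAlaoglu}); moreover, since $u_n,u\in\Dp$ vanish outside $\Om$, both $\nu$ and $\Gamma$ are supported in $\overline\Om$, and $\C^*_g\le\norm{g}<\infty$.

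The heart of the argument is a localized estimate. Fix $x_0\in\overline\Om$ and $\epsilon>0$. Since $r\mapsto\norm{g\chi_{B_r(x_0)}}$ is nondecreasing in $r$ (the ideal property of the norm, Definition \ref{BFC}) and decreases to $\C_g(x_0)$ as $r\to0$, I may choose $r_0>0$ so that, writing $B:=B_{r_0}(x_0)$, one has $\norm{g\chi_{B}}\le\C_g(x_0)+\epsilon\le\C^*_g+\epsilon$. For any $\psi\in C_c^\infty(B)$ the competitor $(u_n-u)\psi$ lies in $\Dp$, and applying Theorem \ref{Mazya's condition} to the weight $g\chi_B\in\H$ gives
\[
\int_\Om g\,\psi^p|u_n-u|^p\dx=\int_\Om (g\chi_B)\,|(u_n-u)\psi|^p\dx\le C_H\norm{g\chi_B}\int_\Om|\nabla((u_n-u)\psi)|^p\dx.
\]
Since $\psi^p\in C_c(\R^N)$ and $\nu_n\wrastar\nu$, the left side converges to $\int\psi^p\,d\nu$; on the right, Lemma \ref{uineq} together with $\Gamma_n\wrastar\Gamma$ gives $\overline{\lim}_n\int_\Om|\nabla((u_n-u)\psi)|^p\dx=\int\psi^p\,d\Gamma$. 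Passing to the $\limsup$ therefore yields the local inequality
\[
\int_{\R^N}\psi^p\,d\nu\le C_H(\C^*_g+\epsilon)\int_{\R^N}\psi^p\,d\Gamma,\qquad\forall\,\psi\in C_c^\infty(B).
\]

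To turn this into a measure comparison on $B$, I observe that every nonnegative $f\in C_c(B)$ is a uniform limit of functions $\psi_k^p$ with $\psi_k\in C_c^\infty(B)$, $\psi_k\ge0$ (take $\psi_k$ to be smooth nonnegative approximations of $f^{1/p}$ and use the uniform continuity of $t\mapsto t^p$ on bounded sets). As $\nu,\Gamma$ are finite and the supports stay in a fixed compact set, the local inequality passes to the limit to give $\int f\,d\nu\le C_H(\C^*_g+\epsilon)\int f\,d\Gamma$ for all $0\le f\in C_c(B)$; approximating $\chi_U$ from below for open $U\subseteq B$ and using outer regularity (Proposition \ref{defmeasure}) then yields $\nu(A)\le C_H(\C^*_g+\epsilon)\Gamma(A)$ for every Borel $A\subseteq B$. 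Covering the separable set $\overline\Om$ by countably many such good balls $\{B_i\}$, disjointifying them into Borel sets $A_i:=B_i\setminus\bigcup_{j<i}B_j\subseteq B_i$ with $\bigcup_i A_i=\bigcup_i B_i\supseteq\overline\Om\supseteq\mathrm{supp}\,\nu\cup\mathrm{supp}\,\Gamma$, and summing, I obtain $\nu(E)=\sum_i\nu(E\cap A_i)\le C_H(\C^*_g+\epsilon)\sum_i\Gamma(E\cap A_i)\le C_H(\C^*_g+\epsilon)\Gamma(E)$ for every Borel $E$. Letting $\epsilon\to0$ finishes the proof.

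The main obstacle is precisely the passage from the local, test-function estimate, which naturally carries the factor $\psi^p$ because Maz'ya's inequality is applied to the admissible competitor $(u_n-u)\psi$, to an honest measure inequality with constant uniformly equal to $C_H\C^*_g$. Two points require care: one cannot simply substitute $\psi=f^{1/p}$, since $f^{1/p}$ need not be $C^1$ near the zeros of $f$, which is why the $p$-th–power approximation step is essential; and the constant must be controlled by the supremum $\C^*_g$ rather than the global norm $\norm{g}$, which is exactly what the localization to small balls (where $\norm{g\chi_B}$ is close to $\C_g(x_0)\le\C^*_g$) buys, the compactness of the supports ensuring finitely many balls suffice at each stage.
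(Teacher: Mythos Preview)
Your proof is correct, but it follows a genuinely different route from the paper's. The paper first establishes the crude bound $\nu(E)\le C_H\norm{g}\,\Gamma(E)$ (same localized Maz'ya inequality and passage to the limit via Lemma \ref{uineq} that you use), deduces $\nu\ll\Gamma$, invokes the Radon--Nikodym theorem, and then applies the Lebesgue differentiation theorem to compute
\[
\frac{d\nu}{d\Gamma}(x)=\lim_{r\to0}\frac{\nu(B_r(x))}{\Gamma(B_r(x))}\le C_H\,\C_g(x),
\]
the last inequality coming from replacing $g$ by $g\chi_{B_r(x)}$ in the crude bound. Your argument instead fixes $\epsilon>0$, chooses for each point a ball on which $\norm{g\chi_B}\le\C_g^*+\epsilon$, proves the measure inequality directly on that ball via the $p$-th--power approximation and outer regularity, and then globalizes by a Lindel\"of covering and disjointification. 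Your approach is more elementary in that it avoids the Radon--Nikodym and Lebesgue differentiation machinery (the latter for general Radon measures, which the paper sources from Federer). The paper's approach, however, yields the sharper \emph{pointwise} bound $\frac{d\nu}{d\Gamma}(x)\le C_H\,\C_g(x)$, recorded as \eqref{21}; this is strictly stronger than the statement of the lemma and is used later (Step~3 of Lemma \ref{lemma9}) to conclude that $\nu$ is supported on $\sum_g$. Your covering argument only sees the supremum $\C_g^*$, so it would not directly give that refinement.
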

  \begin{proof}
As $u_n \wra u$ in $ \Dp$, $u_n \ra u$ in $L^{p}_{loc}(\Om)$ (by Corollary \ref{cpctembedding}).  For $\Phi \in C_c^{\infty}(\R^N)$, $(u_n-u) \Phi \in \D^{1,p}_0(\Om)$ and thus by Theorem \ref{Mazya's condition},
\begin{eqnarray*}
 \int_{\R^N} |\Phi|^p \ d \nu_n  = \int_{\Om} g|(u_n-u)\Phi|^p \ dx & \leq &  C_H \norm{g}  \int_{\Om} |\nabla((u_n -  u)\Phi)|^p \ dx  \\
& = & C_H \norm{g}  \int_{\R^N} |\nabla((u_n -  u)\Phi)|^p \ dx.
\end{eqnarray*}
Take $n \ra \infty$ and use Lemma \ref{uineq} to obtain
 \begin{align} \label{forrmk}
     \int_{\R^N} |\Phi|^p \ d \nu \leq C_H \norm{g} \int_{\R^N} |\Phi|^p \ d \Gamma .
 \end{align}
Now by Proposition \ref{defmeasure}, we get 
  \begin{equation}\label{measureinequality1}
   \nu(E) \leq C_H  \norm{g} \Gamma (E)  \ , \forall E \ Borel \ in \ \R^N.
  \end{equation}
 In particular, $\nu \ll \Gamma$ and hence by Radon-Nikodym theorem, 
 \begin{equation} \label{measureinequality}
  \nu(E) = \int_E  \frac{d \nu}{d \Gamma} \  d\Gamma \ , \forall E \ Borel \ in \ \R^N. 
 \end{equation}
 Further, by Lebesgue differentiation theorem (page 152-168 of \cite{Federer}) we have 
 \begin{equation} \label{Lebdiff}
  \frac{d \nu}{d \Gamma}(x) = \lim_{r \ra 0} \frac{\nu (B_r(x))}{\Gamma (B_r(x))}.
 \end{equation}
 Now replacing $g$ by $g \chi_{B_r(x)}$ and proceeding as before,
 \[ \nu(B_r(x)) \leq C_H \norm{g \chi_{B_r(x)}} \ \Gamma (B_r(x)).\] 
 Thus from \eqref{Lebdiff} we get 
\begin{eqnarray} \label{21}
 \frac{d \nu}{d \Gamma} (x) \leq C_H \C_g(x)
\end{eqnarray} 
and hence 
$\norm{\frac{d \nu}{d \Gamma}}_{\infty} \leq C_H \C^*_g$. Now from \eqref{measureinequality} we obtain $\nu(E) \leq C_H \ \C^*_g \Gamma (E)$ 
for all Borel subsets $E$ of $\R^N$.
\end{proof} 
\begin{rmk} \label{singularset} \rm
 In \cite{Tertikas} (for $p=2$ and  $\Om=\R^N$) and in \cite{Smets} (for  $p\in (1,N)$ and $\Om \subseteq \R^N$), the authors considered the following concentration function: 
\begin{eqnarray*}
 S_{g}(x) & = & \lim_{r\ra 0} \inf \left \{\int_{\Om} |\nabla u|^p \ dx : u\in \D^{1,p}_0 (\Om \cap B_r(x)), \ \int_{\Om} g|u|^p \ dx =1 \right \}, 
 \end{eqnarray*}
and they considered the singular set to be $\left\{x \in \overline{\Om}: S_g(x)< \infty \right\}$ and assumed that the closure of it, is  at most countable (see (H) of \cite{Tertikas} and (H1) of \cite{Smets}). One can easily see that their singular set coincides with $\sum_g$ (by \eqref{S_gC_g}). 
The countability assumption allowed them to  describe $\nu$ as a countable sum of Dirac measures located  on $\sum_g$ and using this they have obtained the absolute continuity of $\nu$ with respect to $\Gamma$ (see Lemma 2.1 of \cite{Smets} and Lemma 3.1 of \cite{Tertikas}). Whereas we use the Radon-Nikodym theorem  and the Lebesgue differentiation theorem to prove the absolute continuity of $\nu$ with respect to $\Gamma$. We would like to stress that we do not make any assumption  on the cardinality or the structure of $\sum_g$ for this purpose.
 \end{rmk}

The next lemma gives a lower estimate for the measure $\tilde{\Gamma}.$ Similar estimate is obtained in Lemma 2.1 of \cite{Smets}.  We make a weaker assumption, $\overline{\sum_g}$ is of Lebesgue measure $0$, than the assumption $\overline{\sum_g}$ is countable.
\begin{lem} \label{lemma9}
Let $g\in \H$ be such that $g\ge0$ and $|\overline{\sum_g}|=0$. If $u_n \wra u$ in $ \Dp$, then 
\begin{equation*}
   \tilde{\Gamma} \geq \begin{cases}
     |\nabla u|^p + \frac{\nu}{C_H \C_g^*}, \quad \text{if} \ \C_g^* \neq 0, \\
     |\nabla u|^p, \quad \text{otherwise}.
    \end{cases}
\end{equation*}
\end{lem}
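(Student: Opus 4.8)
The plan is to establish the two one-sided bounds $\widetilde{\Gamma}\ge |\nabla u|^p\,dx$ and $\widetilde{\Gamma}\ge \tfrac{\nu}{C_H\C_g^*}$ separately, and then to add them using the elementary measure-theoretic fact that if $\mu_1\perp\mu_2$ are mutually singular non-negative measures and a non-negative measure $\lambda$ satisfies $\lambda\ge\mu_1$ and $\lambda\ge\mu_2$, then $\lambda\ge\mu_1+\mu_2$. Here $\mu_1=|\nabla u|^p\,dx$ and $\mu_2=\tfrac{\nu}{C_H\C_g^*}$, and the mutual singularity of these two measures is exactly where the hypothesis $|\overline{\sum_g}|=0$ enters. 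The case $\C_g^*=0$ is immediate: Lemma \ref{mlc1} then forces $\nu\le C_H\C_g^*\Gamma=0$, so only $\widetilde{\Gamma}\ge|\nabla u|^p$ is needed.

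For the first bound I would use weak lower semicontinuity. Writing $w_n=u_n-u$, so that $\nabla w_n\wra 0$ in $L^p$, the convexity inequality $|\nabla u_n|^p\ge |\nabla u|^p+p|\nabla u|^{p-2}\nabla u\cdot\nabla w_n$, tested against any $0\le\phi\in C_c^{\infty}(\R^N)$, gives $\int\phi\,d\widetilde{\Gamma}=\lim_n\int\phi|\nabla u_n|^p\,dx\ge\int\phi|\nabla u|^p\,dx$, since $\phi|\nabla u|^{p-2}\nabla u\in L^{p'}$ and $\nabla w_n\wra 0$. As $\phi\ge0$ is arbitrary, $\widetilde{\Gamma}\ge|\nabla u|^p\,dx$. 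Next I would pin down where $\nu$ lives. From \eqref{21} we have $\frac{d\nu}{d\Gamma}\le C_H\C_g$, hence $\nu(\{\C_g=0\})=\int_{\{\C_g=0\}}\frac{d\nu}{d\Gamma}\,d\Gamma=0$; thus $\nu$ is concentrated on $\sum_g=\{\C_g>0\}\subseteq\overline{\sum_g}$. Since $|\overline{\sum_g}|=0$, the measure $\nu$ is singular with respect to Lebesgue measure, and in particular $\nu\perp|\nabla u|^p\,dx$. This supplies the mutual singularity needed in the combination step.

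The heart of the proof, and the step I expect to be the main obstacle, is the second bound $\widetilde{\Gamma}\ge\tfrac{\nu}{C_H\C_g^*}$, because $\nu$ need not be atomic (the whole point being that $\overline{\sum_g}$ may be uncountable), so one cannot decompose $\nu$ into Dirac masses as in \cite{Smets,Tertikas}. Instead I would compare $\widetilde{\Gamma}$ and $\Gamma$ on small balls: applying \eqref{inequality} with $a=\nabla(u_n-u)$ and $b=\nabla u$, integrating over $B_r(x)$ and letting $n\to\infty$ along radii $r$ whose spheres are null for $\nu,\Gamma,\widetilde{\Gamma}$, one obtains $\widetilde{\Gamma}(B_r(x))\ge(1-\epsilon)\Gamma(B_r(x))-C(\epsilon,p)\int_{B_r(x)}|\nabla u|^p\,dx$. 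Combining this with the localized estimate $\nu(B_r(x))\le C_H\norm{g\chi_{B_r(x)}}\Gamma(B_r(x))$ established inside the proof of Lemma \ref{mlc1}, dividing by $\nu(B_r(x))$, and letting $r\to0$ for $\nu$-a.e.\ $x$ gives the bound on densities. Here I would invoke the Besicovitch differentiation theorem for the existence of $\frac{d\widetilde{\Gamma}}{d\nu}(x)$ for $\nu$-a.e.\ $x$, use $\norm{g\chi_{B_r(x)}}\to\C_g(x)$, and use that the density of $|\nabla u|^p\,dx$ with respect to $\nu$ vanishes $\nu$-a.e.\ (because $\nu\perp|\nabla u|^p\,dx$) to kill the error term; this yields $\frac{d\widetilde{\Gamma}}{d\nu}(x)\ge\frac{1-\epsilon}{C_H\C_g(x)}\ge\frac{1-\epsilon}{C_H\C_g^*}$ for $\nu$-a.e.\ $x$.

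Finally, letting $\epsilon\to0$ and integrating against the Lebesgue decomposition of $\widetilde{\Gamma}$ relative to $\nu$, namely $\widetilde{\Gamma}(E)=\int_E\frac{d\widetilde{\Gamma}}{d\nu}\,d\nu+\widetilde{\Gamma}_s(E)\ge\frac{1}{C_H\C_g^*}\nu(E)$, gives $\widetilde{\Gamma}\ge\tfrac{\nu}{C_H\C_g^*}$. Feeding the two bounds from the previous paragraphs together with the mutual singularity $|\nabla u|^p\,dx\perp\nu$ into the combination fact stated at the outset then yields $\widetilde{\Gamma}\ge |\nabla u|^p+\tfrac{\nu}{C_H\C_g^*}$, completing the proof. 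The crucial conceptual point is that replacing the countability assumption of \cite{Smets,Tertikas} by $|\overline{\sum_g}|=0$ is precisely what makes $|\nabla u|^p\,dx$ and $\nu$ mutually singular, and it is this singularity—rather than an atomic description of $\nu$—that legitimizes adding the two lower bounds.
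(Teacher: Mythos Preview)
Your overall architecture matches the paper's: prove $\tilde{\Gamma}\ge|\nabla u|^p$ by weak lower semicontinuity, prove $\tilde{\Gamma}\ge\frac{\nu}{C_H\C_g^*}$, observe that $\nu$ is carried by $\sum_g\subset\overline{\sum_g}$ and hence is singular to $|\nabla u|^p\,dx$ by the hypothesis $|\overline{\sum_g}|=0$, and then add. The genuine difference is in the second bound. The paper obtains it by proving the stronger intermediate fact that $\tilde{\Gamma}=\Gamma$ on $\overline{\sum_g}$: any Borel $E\subset\overline{\sum_g}$ is covered by open sets $O_m$ with $|O_m|<\tfrac{1}{m}$, and inequality \eqref{inequality} gives $|\Gamma(O_m)-\tilde{\Gamma}(O_m)|\le\epsilon L+C(\epsilon,p)\int_{O_m}|\nabla u|^p\,dx$, which tends to $\epsilon L$ as $m\to\infty$; since $\nu$ lives on $\sum_g$ and $\Gamma\ge\frac{\nu}{C_H\C_g^*}$ by Lemma~\ref{mlc1}, the bound follows at once. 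Your route---localized ball estimates, Portmanteau on balls with null boundary, Besicovitch differentiation of $\tilde{\Gamma}$ with respect to $\nu$, and the vanishing of $D_\nu(|\nabla u|^p\,dx)$ from mutual singularity---is also correct, but replaces an elementary outer-regularity argument with heavier differentiation machinery. The paper's approach is shorter and yields the bonus statement $\tilde{\Gamma}|_{\overline{\sum_g}}=\Gamma|_{\overline{\sum_g}}$; your approach is perhaps more portable to settings where one cannot so easily approximate the singular set from outside.
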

\begin{proof}
Our proof splits in to three steps.\\
{\bf Step 1:} $\tilde{\Ga} \geq |\nabla u|^p.$ Let $\phi \in C_c^{\infty}(\R^N)$ with $0\leq \phi \leq 1$, we need to show that
$\int_{\R^N} \phi \ d\tilde{\Ga} \geq \int_{\R^N} \phi |\nabla u|^p \ dx. $
Notice that,
$$\int_{\R^N} \phi \ d\tilde{\Ga}= \lim_{n \ra \infty} \int_{\R^N} \phi \ d\tilde{\Ga}_n  = \lim_{n \ra \infty} \int_{\Om} \phi |\nabla u_n|^p \ dx = \lim_{n \ra \infty} \int_{\Om} F(x,\nabla u_n(x)) \ dx ,$$
where $F:\Om \times \R^N \mapsto \R$ is defined as $F(x,z)=\phi(x)|z|^p.$ Clearly, $F$ is a Caratheodory function  and $F(x,.)$ is convex for almost every $x$. Hence, by 
Theorem 2.6 of \cite{Fillip} (page 28), we have
$\displaystyle \lim_{n \ra \infty} \int_{\Om} \phi |\nabla u_n|^p \ dx \geq  \int_{\Om} \phi |\nabla u|^p \ dx= \int_{\R^N} \phi |\nabla u|^p \ dx$ and this proves our claim 1.

\noi {\bf Step 2:} $\tilde{\Gamma}=\Gamma$, on $\overline{\sum_g}.$ Let $E\subset\overline{\sum_g}$ be a Borel set. Thus, for each $m \in \N$, there exists an open subset $O_{m}$ containing $E$ such that $|O_m|=|O_{m} \setminus E| < \frac{1}{m}$. Let $\var >0$ be given. Then, for any $\phi \in C_c^{\infty}(O_{m})$ with $0 \leq \phi \leq 1$, using \eqref{inequality} we have
\begin{align*}
    \left|\int_{\Om}  \phi \ d\Gamma_n \ dx -\int_{\Om}  \phi \ d\tilde{\Gamma}_n \ dx \right|&= \left|\int_{\Om}  \phi |\nabla (u_n-u)|^p \ dx -\int_{\Om}  \phi |\nabla u_n|^p \ dx \right| \\
    &\leq \var \int_{\Om}  \phi |\nabla u_n|^p \ dx + \text{C}(\var,p) \int_{\Om} \phi |\nabla u|^p dx \\
 & \leq   \var L + \text{C}(\var,p) \int_{O_{m}}   |\nabla u|^p \ dx,
\end{align*}
where $L=\sup_{n}\left\{\int_{\Om} |\nabla u_n|^p \ dx\right\}$. 
Now letting $n \ra \infty$, we obtain $
 \left|\int_{\Om}  \phi \ d\Gamma  -\int_{\Om}  \phi \ d\tilde{\Gamma}  \right|  \leq \var L + \text{C}(\var,p) \int_{O_{m}}   |\nabla u|^p \ dx.
$
Therefore,
\begin{eqnarray*} 
\left|\Gamma (O_m)-\tilde{\Gamma} (O_m) \right| &=& \sup \left\{ \left|\int_{\Om}  \phi \ d\Gamma -\int_{\Om}  \phi \ \ d\tilde{\Gamma} \right|: \phi \in C_c^{\infty}(O_m), 0 \leq \phi \leq 1 \right \} \\
&\leq & \var L + \text{C}(\var,p) \int_{O_{m}}   |\nabla u|^p \ dx,
\end{eqnarray*}
Now as $m \ra \infty,$ $|O_m|\ra 0$ and hence $| \Gamma (E)-\tilde{\Gamma} (E)| \leq \var L.$ Since $\var >0$ is arbitrary, we conclude $\Gamma(E)=\tilde{\Gamma} (E).$

\noi {\bf{Step 3:}} $ \tilde{\Gamma} \geq |\nabla u|^p + \frac{\nu}{C_H \C_g^*},$ if $\C_g^* \neq 0$. Let $\C_g^* \neq 0$. Then from Lemma \ref{mlc1} we have  $ \Gamma \geq \frac{\nu}{C_H \C_g^*}$. Furthermore, \eqref{21} and \eqref{measureinequality} ensures that $\nu$ is supported on $\sum_g.$ Hence Step 1 and Step 2 yields the following: 
\begin{equation}\label{rep1}
 \tilde{\Gamma} \geq \left\{\begin{array}{ll}
    |\nabla u|^p,  &  \\
     \frac{\nu}{C_H \C_g^*} .
\end{array}\right.    
\end{equation}
Since $|\overline{\sum_g}|=0$, the measure $|\nabla u|^p$ is supported inside $\overline{\sum_g}^{c}$ and hence from \eqref{rep1} we easily obtain  $\tilde{\Gamma} \geq |\nabla u|^p +  \frac{\nu}{C_H \C_g^*}.$
\end{proof}

\begin{lem}\label{lemmasmets}
  Let $g \in \H$, $g \geq 0$ and $u_n \wra u$ in $ \Dp$ and $\Phi_R \in C_b^{\infty}(\R^N)$ with $0\leq \Phi_R \leq 1$, $\Phi_R = 0 $ on $\overline{B_R}$ and $\Phi_R = 1 $ on $B_{R+1}^c$.
Then,
\begin{enumerate}[(A)]
 \item $\displaystyle\lim_{R \ra \infty} \overline{ \lim_{n \ra \infty}}  \int_{\Om \cap \overline{B_R}^c} g|u_n|^p \ dx =\lim_{R \ra \infty} \overline{\lim_{n \ra \infty}}  \nu_n(\Om \cap \overline{B_R}^c) =\lim_{R \ra \infty} \overline{\lim_{n \ra \infty}}  \int_{\Om}  \Phi_R \ d \nu_n,$
\item $\displaystyle \lim_{R \ra \infty} \overline{\lim_{n \ra \infty}}  \int_{\Om \cap \overline{B_R}^c} |\nabla u_n|^p \ dx =\lim_{R \ra \infty} \overline{\lim_{n \ra \infty}}  \Gamma_n(\Om \cap \overline{B_R}^c)=\lim_{R \ra \infty} \overline{\lim_{n \ra \infty}}  \int_{\Om}  \Phi_R \ d \Gamma_n.$
\end{enumerate}                                                                               
 \end{lem}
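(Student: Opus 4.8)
The plan is to prove each of the two chains of equalities in \textbf{(A)} and \textbf{(B)} by splitting it into two independent replacements: first passing from $u_n$ to $u_n-u$ (so that $\int g|u_n|^p$ becomes $\nu_n$, and $\int|\nabla u_n|^p$ becomes $\Gamma_n$), and then passing from the sharp indicator $\chi_{\Om\cap\overline{B_R}^c}$ to the smooth cut-off $\Phi_R$. Before starting I would record that since $u_n\wra u$ in $\Dp$, the sequence $(u_n-u)$ is bounded in $\Dp$, so that $\Gamma_n(\Om)=\int_\Om|\nabla(u_n-u)|^p\,dx\le M$ and, by Theorem \ref{Mazya's condition}, $\nu_n(\Om)\le C_H\norm{g}\,\Gamma_n(\Om)\le M'$ uniformly in $n$. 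I will also use repeatedly the elementary fact that $\big|\overline{\lim_{n\ra\infty}}\,a_n-\overline{\lim_{n\ra\infty}}\,b_n\big|\le\overline{\lim_{n\ra\infty}}\,|a_n-b_n|$.

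For the first replacement I apply the pointwise inequality \eqref{inequality} with $a=u_n-u$ and $b=u$, giving $\big||u_n|^p-|u_n-u|^p\big|\le\var|u_n-u|^p+C(\var,p)|u|^p$. Multiplying by $g\ge0$ and integrating over $\Om\cap\overline{B_R}^c$ yields
\[\Big|\int_{\Om\cap\overline{B_R}^c}g|u_n|^p\,dx-\nu_n(\Om\cap\overline{B_R}^c)\Big|\le\var\,M'+C(\var,p)\int_{\Om\cap\overline{B_R}^c}g|u|^p\,dx.\]
Since $u\in\Dp$ and $g\in\H$, Theorem \ref{Mazya's condition} gives $g|u|^p\in L^1(\Om)$, so its tail $\int_{\Om\cap\overline{B_R}^c}g|u|^p\,dx\to0$ as $R\ra\infty$ by absolute continuity of the Lebesgue integral. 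Taking $\overline{\lim_{n\ra\infty}}$ and then $\lim_{R\ra\infty}$ (both $R$-limits exist because each quantity is monotone decreasing in $R$) bounds $\big|\lim_{R\ra\infty}\overline{\lim_{n\ra\infty}}\int g|u_n|^p-\lim_{R\ra\infty}\overline{\lim_{n\ra\infty}}\nu_n\big|$ by $\var M'$; as $\var>0$ is arbitrary the first equality of \textbf{(A)} follows. Choosing instead $a=\nabla(u_n-u)$, $b=\nabla u$ and using $|\nabla u|^p\in L^1(\Om)$ gives the first equality of \textbf{(B)} verbatim.

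For the second replacement the key is the sandwich $\chi_{B_{R+1}^c}\le\Phi_R\le\chi_{\overline{B_R}^c}$, which follows from $\Phi_R=1$ on $B_{R+1}^c$, $\Phi_R=0$ on $\overline{B_R}$ and $0\le\Phi_R\le1$. Integrating against the positive measure $\nu_n$ gives
\[\nu_n(\Om\cap B_{R+1}^c)\le\int_\Om\Phi_R\,d\nu_n\le\nu_n(\Om\cap\overline{B_R}^c),\]
and the analogous bound for $\Gamma_n$. Writing $\ell=\lim_{R\ra\infty}\overline{\lim_{n\ra\infty}}\,\nu_n(\Om\cap\overline{B_R}^c)$, a short squeeze using $\nu_n(\Om\cap\overline{B_{R+1}}^c)\le\nu_n(\Om\cap B_{R+1}^c)\le\nu_n(\Om\cap\overline{B_R}^c)$ shows that the off-by-one shift does not affect the tail limit, so both outer bounds converge to $\ell$ under $\lim_R\overline{\lim_n}$; the squeeze then forces $\int_\Om\Phi_R\,d\nu_n$ to the same value, giving the second equality of \textbf{(A)}. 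The identical argument with $\Gamma_n$ closes \textbf{(B)}.

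I do not expect a genuine obstacle here, as the statement is essentially a careful bookkeeping lemma. The only point requiring real care is the legitimacy of interchanging $\lim_{R\ra\infty}$ and $\overline{\lim_{n\ra\infty}}$: one must verify the monotonicity in $R$ that guarantees existence of the $R$-limits, keep the uniform mass bound $M'$ in front of the $\var$-term so that it survives the inner $\overline{\lim_n}$, and dispose of the mismatch between the closed set $B_{R+1}^c$ and the open set $\overline{B_R}^c$ by the elementary squeeze above.
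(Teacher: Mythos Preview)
Your proposal is correct and follows essentially the same route as the paper: for the first replacement you control the difference between $|u_n|^p$ and $|u_n-u|^p$ (resp.\ $|\nabla u_n|^p$ and $|\nabla(u_n-u)|^p$) by an $\var$-term plus a tail of $g|u|^p$ (resp.\ $|\nabla u|^p$) in $L^1$, and for the second replacement you use the sandwich $\chi_{B_{R+1}^c}\le\Phi_R\le\chi_{\overline{B_R}^c}$ exactly as in the paper. The only cosmetic difference is that for the first equality in \textbf{(A)} the paper invokes the Brezis--Lieb lemma (Lemma~\ref{Bresiz-Lieb}) directly, obtaining an \emph{equality} $\overline{\lim_n}\,|\,\cdots\,|=\int_{\Om\cap\overline{B_R}^c}g|u|^p\,dx$, whereas you use the pointwise inequality~\eqref{inequality} uniformly for both \textbf{(A)} and \textbf{(B)}; this is harmless, and in fact slightly more elementary since it does not require the a.e.\ convergence $u_n\to u$.
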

\begin{proof}
 By Brezis-Lieb lemma,
 \begin{align*}
 \displaystyle \overline{\lim_{n \ra \infty}} \left| \nu_n(\Om \cap \overline{B_R}^c) - \int_{\Om \cap \overline{B_R}^c} g|u_n|^p \ dx \right| &= 
 \overline{\lim_{n \ra \infty}} \left|\int_{\Om \cap \overline{B_R}^c} g|u_n-u|^p \ dx - \int_{\Om \cap \overline{B_R}^c} g|u_n|^p \ dx \right| \\ &= \int_{\Om \cap \overline{B_R}^c} g|u|^p \ dx.
  \end{align*}
 As $g|u|^p \in L^1(\Om)$, the right-hand side integral goes to $0$ as $R \ra \infty$. Thus, we get the first equality in $(A)$. For the second equality, it is enough to observe that
 \begin{eqnarray*}
  \int_{\Om \cap \overline{B_{R+1}}^c} g|u_n -u|^p \ dx \leq \int_{\Om} g|u_n -u|^p \Phi_R \ dx \leq \int_{\Om \cap \overline{B_{R}}^c} g|u_n -u|^p \ dx.
 \end{eqnarray*}
Now by taking $n, R \ra \infty$ respectively we get the required equality. Now we proceed to prove (B). For $\var >0$, there exists $ \text{C}(\var,p)>0$ (by \eqref{inequality}) such that
\begin{align*}
& \, \displaystyle \overline{\lim_{n \ra \infty}} \left| \Gamma_n(\Om \cap \overline{B_R}^c) - \int_{\Om \cap \overline{B_R}^c} |\nabla u_n|^p \ dx \right| \\ &= 
\overline{\lim_{n \ra \infty}} \left|\int_{\Om \cap \overline{B_R}^c} |\nabla (u_n-u)|^p \ dx - \int_{\Om \cap \overline{B_R}^c} |\nabla u_n|^p \ dx \right| \\ & \leq \var \overline{\lim_{n \ra \infty}} \int_{\Om \cap \overline{B_R}^c}   |\nabla u_n|^p \ dx + \text{C}(\var,p) \int_{\Om \cap \overline{B_R}^c} |\nabla u|^p \ dx \\
& \leq \var L + \text{C}(\var,p) \int_{\Om \cap \overline{B_R}^c} |\nabla u|^p \ dx \,,
\end{align*}
where $L \geq \int_{\Om} |\nabla u_n|^p \ dx$ for all $n$.
Thus, by taking $R \ra \infty$ and then $\var \ra 0$, we obtain the first equality of (B). The second equality of part (B) follows from the same argument as that of part (A). 
\end{proof}
 
  \begin{lem}\label{mlc2}
   Let $g \in \H$, $g \geq 0$ and $u_n \wra u$ in $\Dp$. Set
   \begin{eqnarray*}
    \nu_{\infty} = \displaystyle\lim_{R \ra \infty} \overline{\lim_{n \ra \infty}}  \nu_n(\Om \cap \overline{B_R}^c) \quad \mbox{and} \quad \Gamma_{\infty} =  \displaystyle\lim_{R \ra \infty} \overline{\lim_{n \ra \infty}}  \Gamma_n(\Om \cap \overline{B_R}^c).
   \end{eqnarray*} 
  Then
\begin{enumerate}[(i)]
 \item $\nu_{\infty} \leq C_H\ \C_g(\infty)  \Gamma_{\infty} \nonumber,$
  \item $ \displaystyle \overline{\lim}_{n \ra \infty} \int_{\Om} g|u_n|^p \ dx = \int_{\Om} g|u|^p \ dx + \norm{\nu} + \nu_{\infty}.$
\item[(iii)] Further, if $|\overline{\sum_g}|=0$, then we have

\begin{equation*}
\displaystyle \overline{\lim}_{n \ra \infty} \int_{\Om} |\nabla u_n|^p \ dx  \geq \begin{cases}
\displaystyle \int_{\Om} |\nabla u|^p \ dx + \frac{\norm{\nu}}{C_H\C_g^*} +  \Gamma_{\infty}, \quad \ \text{if} \ \ \C_g^* \neq 0 \\
\displaystyle \int_{\Om} |\nabla u|^p \ dx +  \Gamma_{\infty}, \quad \ \text{otherwise}. 
     \end{cases}
  \end{equation*}
\end{enumerate}
  \end{lem}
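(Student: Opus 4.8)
The plan is to handle all three parts by a common mass-splitting scheme: for each of the measures $\nu_n$, $\Gamma_n$ and $\widetilde{\Gamma}_n$ I would separate a compactly supported piece, controlled by weak$^*$ convergence, from a piece escaping to infinity, controlled by Lemma~\ref{lemmasmets}. Throughout I would use the cut-offs $\Phi_R$ of Lemma~\ref{lemmasmets} and the elementary sandwich $\chi_{\overline{B_{R+1}}^c}\le|\Phi_R|^p\le\chi_{\overline{B_R}^c}$, valid since $0\le\Phi_R\le1$ and $p\ge1$. For part (i), I would imitate the proof of Lemma~\ref{mlc1}, but localized at infinity. Since $(u_n-u)\Phi_R\in\Dp$ vanishes on $\overline{B_R}$, Theorem~\ref{Mazya's condition} applied with the weight $g\chi_{\overline{B_R}^c}$ gives $\int_{\Om}|\Phi_R|^p\,d\nu_n\le C_H\norm{g\chi_{\overline{B_R}^c}}\int_{\Om}|\nabla((u_n-u)\Phi_R)|^p\,dx$. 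Taking $\overline{\lim}_{n}$ and invoking Lemma~\ref{uineq} (legitimate because $\nabla\Phi_R$ has compact support) replaces the gradient term by $\overline{\lim}_{n}\int_{\Om}|\Phi_R|^p\,d\Gamma_n$. Letting $R\to\infty$, using $\norm{g\chi_{\overline{B_R}^c}}\to\C_g(\infty)$ together with the sandwich (so that the $R$-limits of $\overline{\lim}_{n}\int|\Phi_R|^p\,d\nu_n$ and $\overline{\lim}_{n}\int|\Phi_R|^p\,d\Gamma_n$ are exactly $\nu_\infty$ and $\Gamma_\infty$ by Lemma~\ref{lemmasmets}), I obtain $\nu_\infty\le C_H\,\C_g(\infty)\,\Gamma_\infty$.

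For part (ii), I would first apply the Brezis--Lieb lemma (Lemma~\ref{Bresiz-Lieb}) to $g^{1/p}u_n$, which is uniformly bounded in $L^p(\Om)$ by Theorem~\ref{Mazya's condition} and convergent a.e.\ after passing to a subsequence, to get $\overline{\lim}_{n}\int_{\Om}g|u_n|^p\,dx=\int_{\Om}g|u|^p\,dx+\overline{\lim}_{n}\nu_n(\R^N)$. It then remains to prove $\overline{\lim}_{n}\nu_n(\R^N)=\norm{\nu}+\nu_\infty$. Splitting $\nu_n(\R^N)=\int_{\Om}(1-\Phi_R)\,d\nu_n+\int_{\Om}\Phi_R\,d\nu_n$, the first term converges in $n$ (since $1-\Phi_R\in C_c(\R^N)$ and $\nu_n\wrastar\nu$) to $\int(1-\Phi_R)\,d\nu$, which tends to $\norm{\nu}=\nu(\R^N)$ as $R\to\infty$ by dominated convergence ($\nu$ being a finite measure), while the second term yields $\nu_\infty$ by Lemma~\ref{lemmasmets}(A). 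As the left-hand side is independent of $R$, letting $R\to\infty$ gives the identity.

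For part (iii), the same splitting applied to $\widetilde{\Gamma}_n$ gives the exact identity $\overline{\lim}_{n}\int_{\Om}|\nabla u_n|^p\,dx=\widetilde{\Gamma}(\R^N)+\Gamma_\infty$; here the escaping mass of $|\nabla u_n|^p$ coincides with that of $|\nabla(u_n-u)|^p$, namely $\Gamma_\infty$, which is precisely the first equality of Lemma~\ref{lemmasmets}(B) (once more via the sandwich for $\Phi_R$). Finally, under the hypothesis $|\overline{\sum_g}|=0$, Lemma~\ref{lemma9} supplies the measure inequality $\widetilde{\Gamma}\ge|\nabla u|^p+\frac{\nu}{C_H\C_g^*}$ when $\C_g^*\ne0$ (and $\widetilde{\Gamma}\ge|\nabla u|^p$ otherwise); integrating over $\R^N$ gives $\widetilde{\Gamma}(\R^N)\ge\int_{\Om}|\nabla u|^p\,dx+\frac{\norm{\nu}}{C_H\C_g^*}$, and combining this with the identity above proves the stated lower bound.

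I expect the main difficulty to be the careful bookkeeping of the two iterated limits $\overline{\lim}_{n}$ and $\lim_{R}$: because $\limsup$ is only subadditive, each decomposition must isolate a genuinely convergent-in-$n$ piece (the compactly supported cut-off integral, governed by weak$^*$ and dominated convergence) so that the remaining escaping piece can be identified cleanly with $\nu_\infty$ or $\Gamma_\infty$ through Lemma~\ref{lemmasmets}. The second delicate point, specific to (iii), is that Lemma~\ref{lemma9} bounds only the local limit measure $\widetilde{\Gamma}$ and says nothing about the escaping mass, so the decomposition must retain the $\Gamma_\infty$ term separately and reattach it only after the measure inequality is applied; this is exactly where $|\overline{\sum_g}|=0$ enters, upgrading the pointwise bound $\frac{d\nu}{d\Gamma}\le C_H\C_g(x)$ of Lemma~\ref{mlc1} into the additive lower bound for $\widetilde{\Gamma}$.
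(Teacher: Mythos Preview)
Your proposal is correct and follows essentially the same route as the paper: the same cut-off $\Phi_R$, the same application of Theorem~\ref{Mazya's condition} with $g\chi_{\overline{B_R}^c}$ and Lemma~\ref{uineq} for (i), the Brezis--Lieb/splitting argument via Lemma~\ref{lemmasmets} for (ii), and the decomposition $\|\widetilde{\Gamma}\|+\Gamma_\infty$ combined with Lemma~\ref{lemma9} for (iii). The only cosmetic difference is that in (ii) you apply Brezis--Lieb globally first and then split $\nu_n(\R^N)$, whereas the paper splits $\int g|u_n|^p$ first and applies Brezis--Lieb to the compactly supported piece; your ordering is arguably cleaner regarding the $\limsup$ bookkeeping, but the content is identical.
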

 \begin{proof}
 (i): For $R>0$, choose $\Phi_R \in C_b^{1}(\R^N)$ satisfying  $0\leq \Phi_R \leq 1$, $\Phi_R = 0 $ on $\overline{B_R}$ and $\Phi_R = 1 $ on $B_{R+1}^c$. Clearly, $(u_n-u) \Phi_R \in \D^{1,p}_0(\Om \cap \displaystyle\overline{B_R}^c)$. Since
$\norm{g\chi_{ \overline{B_R}^c}} < \infty, $ by Theorem \ref{Mazya's condition},
\[ \int_{\Om \cap \overline{B_R}^c} g|(u_n-u)\Phi_R|^p \ dx \leq C_H \  \norm{g \chi_{\overline{B_R}^c}}  \int_{\Om \cap\overline{B_R}^c} |\nabla((u_n -  u)\Phi_R)|^p \ dx . \]
By Lemma \ref{uineq} we have, $\displaystyle \overline{\lim_{n \ra \infty}} \int_{\Om \cap\overline{B_R}^c} |\nabla((u_n -  u)\Phi_R)|^p \ dx =  \overline{\lim_{n \ra \infty}} \int_{\Om \cap\overline{B_R}^c} | \Phi_R|^p \ d \Gamma_n$. 
Therefore, letting $n \ra \infty$, $ R \ra \infty$ and using Lemma \ref{lemmasmets} successively in the above inequality we obtain $ \nu_{\infty} \leq C_H  \C_g(\infty) \ \Gamma_{\infty}.$ 

\noi $(ii)$: By choosing $\Phi_R$ as above and using Brezis-Lieb lemma together with part (A) of Lemma \ref{lemmasmets} we have,
\begin{align*}
& \, \overline{\lim_{n\ra \infty}} \int_{\Om} g| u_n|^p \ dx \\ &=  \overline{\lim_{n\ra \infty}} \left[ \int_{\Om} g| u_n|^p (1-\Phi_R) \ dx + \int_{\Om} g| u_n|^p \Phi_R \ dx  \right] \\
                                          &= \overline{\lim_{n\ra \infty}} \left[  \int_{\Om} g| u|^p (1-\Phi_R) \ dx  + \int_{\Om} g| u_n-u|^p (1-\Phi_R) \ dx + \int_{\Om} g| u_n|^p \Phi_R \ dx \right]   \\
                                          &= \int_{\Om} g| u|^p \ dx + \norm{\nu} + \nu_{\infty}. 
\end{align*} 
 
 \noi $(iii)$: Notice that 
 \begin{align*}
\overline{\lim_{n\ra \infty}} \int_{\Om} |\nabla u_n|^p \ dx &= \overline{\lim_{n\ra \infty}} \left[ \int_{\Om} |\nabla u_n|^p (1-\Phi_R)\ dx + \int_{\Om} |\nabla u_n|^p \Phi_R \ dx \right] \\ &= \tilde{\Gamma}(1-\Phi_R)+ \overline{\lim_{n\ra \infty}}  \int_{\Om} |\nabla u_n|^p \Phi_R \ dx
  \end{align*}
By taking $R \ra \infty$ and using part (B) Lemma \ref{lemmasmets} we get
\begin{equation*} 
 \overline{\lim_{n\ra \infty}} \int_{\Om} |\nabla u_n|^p \ dx = \|\tilde{\Gamma}\|+ \Gamma_{\infty}.   
\end{equation*}
Now, using Lemma \ref{lemma9}, we obtain \begin{equation*}
\displaystyle \overline{\lim}_{n \ra \infty} \int_{\Om} |\nabla u_n|^p \ dx \geq \begin{cases}
\displaystyle \int_{\Om} |\nabla u|^p \ dx + \frac{\norm{\nu}}{C_H\C_g^*} +  \Gamma_{\infty}, \quad \ \text{if} \ \C_g^* \neq 0 \\
\displaystyle \int_{\Om} |\nabla u|^p \ dx  +  \Gamma_{\infty}, \quad \ \text{otherwise}. 
     \end{cases}
  \end{equation*}
 \end{proof}

\begin{rmk}[ The assumptions on the  the singular set $\sum_g$] \rm
Let us recall the following fundamental result in the concentration compactness theory  by Lions \cite[Lemma 1.2]{Lions2a}.
Let $\nu, \Gamma$ be two non-negative, bounded measures on $\R^N$ such that
 \begin{equation} \label{lionsineq}
 \left[\int_{\R^N} |\phi|^q \ d\nu \right]^{\frac{1}{q}} \leq C \left[ \int_{\R^N} |\phi|^p \ d\Gamma \right]^{\frac{1}{p}}, \ \forall \,\phi \in C_c^{\infty}(\R^N),
 \end{equation}
 for some $C>0$ and $1 \leq p < q.$ Then there exist at most a countable set $\left\{x_j \in \R^N : j \in \mathbb{J} \right\}$ and $ \nu_j \in (0, \infty)$ such that
 \begin{align} \label{lionseq}
     \nu = \displaystyle \sum_{j \in \mathbb{J}} \nu_j \delta_{x_j}.
 \end{align}
 For $q=p$, in  \cite{Smets,Tertikas} authors assumed the countability of the singular set  $\overline{\sum_g}$ and obtain the  same  representation of $\gamma$ as in \eqref{lionseq}. This representation helps them for proving  the results (\cite[Lemma 3.1]{Tertikas}, \cite[Lemma 2.1]{Smets}). In this situation, we have seen that $\gamma$ is supported on the set $\overline{\sum_g}$ (by Lemma \ref{mlc1}). In this article, we relax the countability assumption on $\overline{\sum_g}$ and by pass the representation of $\gamma$ in order to derive Lemma \ref{mlc2}.  Indeed,  we have \eqref{forrmk} which is the limiting case of \eqref{lionsineq} ($q=p$).
\end{rmk}

In the following lemma we approximate $\F(\Om)$ functions using $L^{\infty}(\Om)$ functions similar result is obtained for $\mathcal{F}_{\frac{N}{p}}(\Om)$ in Proposition 3.2 of \cite{AMM}.
\begin{lem} \label{charF}
$g \in \F(\Om)$ if and only if for every $\epsilon >0,$ $\exists g_{\epsilon} \in L^{\infty}(\Om) $ such that $|Supp(g_{\epsilon})|< \infty$ and $\norm{g-g_{\epsilon}}< \epsilon.$
\end{lem}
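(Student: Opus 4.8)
The plan is to prove both implications by exploiting the definition $\F(\Om) = \overline{C_c^\infty(\Om)}$ in $\H$ together with the Banach function space structure of $\H$, in particular the lattice property (item 2 of Definition \ref{BFC}) which gives monotonicity of the norm $\norm{.}$. The key observation is that any function $g_\epsilon \in L^\infty(\Om)$ with $|Supp(g_\epsilon)| < \infty$ is itself in $\F(\Om)$; granting this, the ``if'' direction is immediate since $\F(\Om)$ is closed in $\H$, and the ``only if'' direction reduces to approximating an arbitrary $C_c^\infty(\Om)$ function (which we may take from the definition of $\F(\Om)$) by such $g_\epsilon$, which is trivial because $C_c^\infty(\Om) \subseteq L^\infty(\Om)$ with compact (hence finite-measure) support.

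For the \textbf{``if''} direction I would argue as follows. Suppose for every $\epsilon > 0$ there is $g_\epsilon \in L^\infty(\Om)$ with $|Supp(g_\epsilon)| < \infty$ and $\norm{g - g_\epsilon} < \epsilon$. It then suffices to show each such $g_\epsilon$ lies in $\F(\Om)$, because $\F(\Om)$ is a closed subspace of $\H$ and $g$ is a norm-limit of elements of $\F(\Om)$. To place $g_\epsilon$ in $\F(\Om)$ I would approximate it by $C_c^\infty(\Om)$ functions in the $\norm{.}$ norm. Writing $K = Supp(g_\epsilon)$, a measurable set of finite measure, I would use standard density (mollification together with truncation of the support to a compact subset of $\Om$) to produce $\phi_k \in C_c^\infty(\Om)$ with $\phi_k \to g_\epsilon$ in $L^{\frac{N}{p},\infty}(\Om)$; indeed, since $g_\epsilon$ is bounded with finite-measure support, $g_\epsilon \in L^r(\Om)$ for every $r$, in particular in $L^{\frac{N}{p},\infty}(\Om)$, and the approximants can be arranged so that $\norm{g_\epsilon - \phi_k}_{(\frac{N}{p},\infty)} \to 0$. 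The embedding $\norm{.} \leq C \norm{.}_{(\frac{N}{p},\infty)}$ (Proposition 3) then forces $\norm{g_\epsilon - \phi_k} \to 0$, so $g_\epsilon \in \F(\Om)$.

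For the \textbf{``only if''} direction, let $g \in \F(\Om)$ and fix $\epsilon > 0$. By definition of $\F(\Om)$ as the closure of $C_c^\infty(\Om)$ in $\H$, there is $\phi \in C_c^\infty(\Om)$ with $\norm{g - \phi} < \epsilon$. Now $\phi \in L^\infty(\Om)$ and $Supp(\phi)$ is compact, hence of finite Lebesgue measure, so setting $g_\epsilon := \phi$ gives a function with the three required properties. This direction is essentially definitional.

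The \emph{main obstacle} is the claim embedded in the ``if'' direction that a bounded function with finite-measure support can be approximated in the $\norm{.}$ norm by $C_c^\infty(\Om)$ functions. The subtlety is that the support set $K$ has finite measure but need not be compact in $\Om$ nor contained in $\Om$ away from its boundary, so one must first truncate $K$ to a compact subset (controlling the discarded part via absolute continuity of the $L^{\frac{N}{p},\infty}$ or Lebesgue integral of a bounded function over a small-measure set) and only then mollify. I would route this through the Lorentz-space embedding $L^{\frac{N}{p},\infty}(\Om) \embd \H$ rather than working directly with the capacitary norm $\norm{.}$, since density of $C_c^\infty(\Om)$ in the relevant Lorentz-type scale is cleaner; the monotonicity (lattice) property of $\norm{.}$ then handles the truncation step, because cutting the support only decreases $|g_\epsilon|$ pointwise and hence decreases $\norm{.}$.
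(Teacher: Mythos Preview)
Your proposal is correct and follows essentially the same route as the paper: the ``only if'' direction is identical, and for the ``if'' direction both you and the paper show that each $g_\epsilon$ can be approximated in $\norm{.}$ by $C_c^\infty(\Om)$ functions via an embedding of a Lebesgue/Lorentz space into $\H$. The paper streamlines your argument by working directly in $L^{\frac{N}{p}}(\Om)$ rather than $L^{\frac{N}{p},\infty}(\Om)$: since $g_\epsilon$ is bounded with finite-measure support it lies in $L^{\frac{N}{p}}(\Om)$, where density of $C_c^\infty(\Om)$ is classical (absorbing your ``main obstacle'' about non-compact support automatically), and then the embedding $L^{\frac{N}{p}}(\Om)\hookrightarrow L^{\frac{N}{p},\infty}(\Om)\hookrightarrow \H$ finishes.
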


\begin{proof}
Let $g \in \F(\Om)$ and $\epsilon >0$ be given. By definition of $\F(\Om)$, $ \exists g_{\epsilon} \in C_c^{\infty}(\Om)$ such that $\norm{g-g_{\epsilon}} < \epsilon .$ This $g_{\epsilon}$ fulfill our requirements. 
For the converse part, take a $g$ satisfying the hypothesis. Let $\epsilon >0 $ be arbitrary. Then $\exists g_{\epsilon} \in L^{\infty}(\Om)$ such that 
$|Supp(g_{\epsilon})|< \infty$ and $\norm{g-g_{\epsilon}}< \frac{\epsilon}{2}.$
Thus, $g_{\epsilon} \in L^{\frac{N}{p}}(\Om)$ and hence there exists $ \phi_{\epsilon} \in C_c^{\infty}(\Om)$ such that 
 $\norm{g_{\epsilon}-\phi_{\epsilon}}_{\frac{N}{p}} < \frac{\epsilon}{2C}$,
 where $C$ is the embedding constant for the embedding $ L^{\frac{N}{p}}(\Om)$ into $\H$. Now by triangle inequality, we obtain $\norm{g-\phi_{\epsilon}}< \epsilon$ as required.
\end{proof}

The next proposition gives an interesting property of capacity, which helps us to localize the norm on $\H$.
\begin{proposition} \label{propofcap1}
   There exists $C_1, C_2 >0$ such that for $F\cset \Om,$ 
   \begin{enumerate}[(i)]
    \item $ \cp(F \cap B_r(x), \Om \cap B_{2r}(x)) \leq C_1 \cp(F \cap B_{r}(x), \Om),\ \forall r>0.$
    \item $\cp(F \cap B_{2R}^c, \Om \cap \overline{B_R}^c) \leq C_2 \cp(F \cap B_{2R}^c, \Om), \ \forall R>0.$
   \end{enumerate}
\end{proposition}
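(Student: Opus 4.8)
The plan is to prove both inequalities by a cut-off construction, noting first that they run \emph{against} the monotonicity in Proposition \ref{propcap}(a): since $\Om \cap B_{2r}(x) \subseteq \Om$ and $\Om \cap \overline{B_R}^c \subseteq \Om$, that proposition only yields the trivial reverse bounds, so the content of the statement is the comparability in the opposite direction. For (i) I would start from an almost-optimal admissible function $u$ for $\cp(F \cap B_r(x), \Om)$, that is $u \in \mathcal{N}(F \cap B_r(x))$ with $\int_\Om |\Gr u|^p \le \cp(F \cap B_r(x),\Om) + \eps$, and localize it by setting $v = u\eta$, where $\eta \in C_c^\infty(B_{2r}(x))$ is a standard cut-off with $\eta \equiv 1$ on $\overline{B_r(x)}$, $0 \le \eta \le 1$, and $|\Gr \eta| \le C/r$. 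Since $F \cap B_r(x)$ lies inside the open ball on which $\eta \equiv 1$, we have $v \ge 1$ on an open neighbourhood of $F \cap B_r(x)$, while $v$ is supported in $B_{2r}(x)$; hence $v$ is admissible for $\cp(F \cap B_r(x), \Om \cap B_{2r}(x))$.

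The next step is to estimate $\int |\Gr v|^p$. Using the elementary bound $|\Gr(u\eta)|^p \le 2^{p-1}\big(\eta^p |\Gr u|^p + u^p |\Gr \eta|^p\big)$, the first term is at most $\int_\Om |\Gr u|^p$, and the whole proof reduces to controlling the cut-off term $\int u^p |\Gr \eta|^p \le (C/r)^p \int_{B_{2r}(x) \setminus B_r(x)} |u|^p$. Here I would invoke the Hardy-Sobolev inequality \eqref{CHS} centred at $x$ (valid by translation invariance): on the annulus $B_{2r}(x) \setminus B_r(x)$ one has $|y - x| \le 2r$, hence $r^{-p} \le 2^p |y-x|^{-p}$, so
\[
\frac{1}{r^p}\int_{B_{2r}(x)\setminus B_r(x)} |u|^p \, dy \le 2^p \int_{\R^N} \frac{|u|^p}{|y-x|^p}\, dy \le 2^p \Big(\frac{p}{N-p}\Big)^p \int_{\R^N} |\Gr u|^p\, dy.
\]
Combining, $\int |\Gr v|^p \le C_1 \int_\Om |\Gr u|^p$ with $C_1 = C_1(N,p)$, and taking the infimum over $u$ and letting $\eps \to 0$ yields (i). The crucial feature is that the factor $r^{-p}$ produced by $|\Gr \eta|^p$ is exactly matched by the scaling of the Hardy weight, so $C_1$ is independent of $r$ and of $x$.

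For (ii) the scheme is identical with the roles of the balls reversed: take $u$ almost optimal for $\cp(F \cap B_{2R}^c, \Om)$ and set $v = u\zeta$, where $\zeta$ is a cut-off vanishing on $B_R$, equal to $1$ on $B_{2R}^c$, with $|\Gr \zeta| \le C/R$ supported in $B_{2R} \setminus B_R$. Then $v$ vanishes on $B_R$ and equals $u$ (hence $\ge 1$) near $F \cap B_{2R}^c$, so it is admissible for $\cp(F \cap B_{2R}^c, \Om \cap \overline{B_R}^c)$. The cut-off term is controlled in the same way, now using \eqref{CHS} centred at the origin together with $R^{-p} \le 2^p |y|^{-p}$ on $B_{2R} \setminus B_R$.

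I expect the point that most needs careful verification, rather than a genuine obstacle, to be the admissibility of $v$ in the localized Beppo-Levi space: that $u\eta$ (respectively $u\zeta$) indeed lies in $\mathcal{D}^{1,p}_0(\Om \cap B_{2r}(x))$ (respectively $\mathcal{D}^{1,p}_0(\Om \cap \overline{B_R}^c)$) and is $\ge 1$ on an honest open neighbourhood of the relevant set. This is exactly where one uses that $\eta \equiv 1$ on the \emph{open} ball $B_r(x)$ containing $F \cap B_r(x)$ and that multiplication by a smooth compactly supported cut-off preserves membership in $\Dp$ while localizing the support. The analytic heart of the argument is the uniform-in-scale control of the cut-off gradient term via the Hardy-Sobolev inequality; everything else is a routine cut-off and infimum argument, and both constants $C_1, C_2$ depend only on $N$ and $p$.
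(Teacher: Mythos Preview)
Your proof is correct and follows the same cut-off strategy as the paper: take an almost-optimal $u$ for the capacity relative to $\Om$, multiply by a scaled cut-off, and check that the product is admissible for the localized capacity with a uniformly controlled Dirichlet integral. The only difference lies in how you bound the cut-off term $\int |u|^p |\nabla \eta|^p$. You invoke the Hardy inequality \eqref{CHS} centred at $x$ (respectively at the origin for part (ii)) together with the pointwise bound $r^{-p} \le 2^p |y-x|^{-p}$ on the annulus; the paper instead applies H\"older with exponents $\frac{p^*}{p}$ and $\frac{N}{p}$, then uses the Sobolev embedding $\|u\|_{p^*} \le C\|\nabla u\|_p$ together with the scale invariance $\int_{\R^N} |\nabla \Phi_r|^N\,dx = \int_{\R^N} |\nabla \Phi|^N\,dx$. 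Both mechanisms yield a constant independent of $r$ (respectively $R$) for the same underlying reason---the critical scaling of the Hardy weight, or equivalently of the $L^N$ norm of the gradient---and are interchangeable here. Your route is marginally more direct; the paper's makes the role of the conformal dimension $N$ slightly more explicit.
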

\begin{proof}
$(i)$  Let $\Phi \in C_c^{\infty}(\R^N)$ be such that $0\leq \Phi \leq 1$, $\Phi = 1 $ on $\overline{B_1(0)}$ and $Supp(\Phi) \subseteq B_2 (0)$. Take $\Phi_r (z)= \Phi (\frac{z-x}{r}).$ Let  $\epsilon >0$ be given. 
 Then for $F\cset \Om,$ $\exists u \in \mathcal{N}(F \cap B_r(x))$ such that
$\int_{\Om}|\nabla u|^p < \cp(F \cap B_r(x), \Om) + \epsilon $. If we set $w_r(z)=\Phi_r(z) u(z) $, then it is easy to see that $w_r \in \D^{1,p}_0(\Om \cap B_{2r}(x))$ and $w_r \geq 1$ on $F \cap B_{r}(x)$. Further,
we have the following estimate:
  \begin{align*}
  \int_{\Om} |\nabla w_r|^p \ dx & \leq C \left[\int_{\Om} |\Phi_r|^p |\nabla u|^p \ dx + \int_{\Om} |u|^p |\nabla \Phi_r|^p\ dx \right] \\
  & \leq C \left[ \int_{\Om} |\nabla u|^p\ dx + \left(\int_{\Om} |u|^{p*}\ dx \right)^{p/p^*}  \left(\int_{\Om} |\nabla \Phi_r|^N\ dx \right)^{p/N} \right].    
                           \end{align*}
                           By noticing $\displaystyle \int_{\Om} |\nabla \Phi_r|^N \ dx \le \int_{\R^N} |\nabla \Phi|^N \ dx$ and then using  the Sobolev embedding, we obtain \[\int_{\Om} |\nabla w_r|^p \ dx \le C_1 \int_{\Om} |\nabla u|^p \ dx,\]                          
where $C_1$ is a constant  independent of $F,r$ and $\epsilon$. 
Therefore, \[\cp(F \cap B_r(x), \Om \cap B_{2r}(x)) \leq C_1 \cp(F \cap B_r(x), \Om) + C_1 \epsilon.\] Now as $\epsilon >0$ is arbitrary we obtain the desired result.\\
\noi $(ii)$ For $\Phi \in C_b^{\infty}(\R^N)$ with $0\leq \Phi \leq 1$, $\Phi = 0 $ on $\overline{B_1} (0)$ and $\Phi = 1 $ on $B_2 (0)^c$, we take
 $\Phi_R (z)= \Phi (\frac{z}{R}).$
The rest of the proof is similar to the proof of $(i)$.
\end{proof}

Now we consider the map $ |G|: \Dp \mapsto \R$ defined as $|G|(u)= \displaystyle \int_{\Om}|g||u|^p \ dx$ and state the following proposition.
\begin{proposition} \label{Gcpct}
 Let $g \in \H$. Then $G$ is compact  if and only if  $|G|$ is compact.
\end{proposition}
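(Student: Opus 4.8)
The plan is to prove both implications directly from the definitions, exploiting the pointwise relation $\left|G(u)\right| \le |G|(u)$ together with the fact that the difference $|G|(u) - G(u) = \int_\Om (|g|-g)|u|^p\,dx = 2\int_{\{g<0\}}|g||u|^p\,dx$ is itself of the same type as $|G|$. First I would record that both $G$ and $|G|$ are weakly continuous precisely on those sequences for which the relevant integral limits pass, so it suffices to test compactness against weakly convergent sequences: a map of this form is compact on $\Dp$ if and only if $u_n \wra u$ in $\Dp$ implies the corresponding functional evaluated at $u_n$ converges to its value at $u$. This reformulation is routine given that $\Dp$ is reflexive and bounded sequences have weakly convergent subsequences.

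For the direction $|G|$ compact $\Rightarrow$ $G$ compact, I would fix $u_n \wra u$ and estimate $\left|G(u_n) - G(u)\right| = \left|\int_\Om g(|u_n|^p - |u|^p)\,dx\right| \le \int_\Om |g|\,\big||u_n|^p-|u|^p\big|\,dx$, and control the right-hand side by the compactness of $|G|$: the inequality \eqref{inequality} lets me bound $\big||u_n|^p - |u|^p\big| \le \epsilon|u_n|^p + C(\epsilon,p)|u_n-u|^p$, so that $\left|G(u_n)-G(u)\right| \le \epsilon\,|G|(u_n) + C(\epsilon,p)\,|G|(u_n-u)$. Since $u_n - u \wra 0$ and $|G|$ is compact, $|G|(u_n-u) \to 0$, while $|G|(u_n)$ stays bounded; letting $n\to\infty$ and then $\epsilon\to 0$ gives $G(u_n)\to G(u)$. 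The reverse direction is the more delicate one and is where I expect the main obstacle: from $G$ compact I want $|G|$ compact, but $|G|(u)=\int_\Om |g||u|^p$ is built from $|g|$, not $g$, so I cannot simply read it off from $G$.

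My plan to overcome this is to observe that $|g| \in \H$ whenever $g\in\H$ (the norm $\norm{\cdot}$ depends only on $|g|$, so $\norm{g}=\norm{|g|}$ and membership in $\H$ is unaffected), and that compactness of $G$ associated to the weight $g$ forces compactness of the functional associated to every weight dominated by $|g|$ in norm. Concretely, I would decompose $|g| = g^+ + g^-$ where $g^+,g^- \ge 0$, note $G = G^+ - G^-$ and $|G| = G^+ + G^-$ with $G^{\pm}(u)=\int_\Om g^{\pm}|u|^p\,dx$, and argue that each $G^\pm$ is separately compact. The cleanest route is through one of the equivalent characterizations proved later in the paper: by Theorem \ref{eqivthm1}, $G$ (built from $g$) is compact iff $g$ has absolutely continuous norm in $\H$; since $\norm{g\chi_{A_n}} = \norm{|g|\chi_{A_n}} = \norm{g^{\pm}\chi_{A_n}}$ whenever $g^\pm$ is the relevant sign-part majorized by $|g|$, absolute continuity of the norm of $g$ transfers verbatim to $|g|$, and another application of Theorem \ref{eqivthm1} yields compactness of $|G|$.

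If one prefers an argument independent of the later theorems, I would instead run the nonnegative concentration-compactness machinery of Lemma \ref{mlc1} and Lemma \ref{mlc2} directly for the nonnegative weight $|g|$: the key estimate $\nu(E) \le C_H\,\C^*_{|g|}\,\Gamma(E)$ and the identity in part $(ii)$ of Lemma \ref{mlc2} show that $\int_\Om |g||u_n|^p \to \int_\Om |g||u|^p$ along weakly convergent sequences exactly when the concentration function $\C_{|g|}$ vanishes, and since $\C_{|g|}=\C_g$ (because $\norm{g\chi_{B_r(x)}}=\norm{|g|\chi_{B_r(x)}}$), the concentration hypotheses for $g$ and $|g|$ coincide. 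The main obstacle throughout is precisely this bookkeeping: ensuring that every quantity governing compactness — the norm $\norm{\cdot}$, the concentration functions $\C_g(x)$ and $\C_g(\infty)$, and absolute continuity of the norm — is manifestly invariant under $g\mapsto|g|$, which holds because all of them are defined through $|g|$ to begin with. Once this invariance is made explicit, the equivalence of compactness of $G$ and of $|G|$ follows immediately.
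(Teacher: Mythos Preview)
Your argument for the implication ``$|G|$ compact $\Rightarrow$ $G$ compact'' is correct and self-contained; it differs from the paper's route (which uses dominated convergence rather than inequality \eqref{inequality}) but is equally valid.

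The gap is in the reverse direction, where both of your proposed arguments are circular within the paper's logical order. Theorem \ref{eqivthm1} is established as part of Theorem \ref{allinone}, and the implication $(i)\Rightarrow(ii)$ there rests on Lemma \ref{cpct}; but the proof of Lemma \ref{cpct} explicitly invokes ``the compactness of $|G|$'' under the hypothesis that $G$ is compact --- that is, it already consumes the implication $G$ compact $\Rightarrow$ $|G|$ compact that you are trying to prove. Your alternative through Lemmas \ref{mlc1}--\ref{mlc2} and the concentration function has the same defect: those lemmas deliver $|G|(u_n)\to|G|(u)$ only once one knows $\C_{|g|}^*=\C_{|g|}(\infty)=0$, and deducing that from ``$G$ compact'' again passes through Theorem \ref{allinone}, hence through Proposition \ref{Gcpct}. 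The observation that $\norm{\cdot}$, $\C_g$, and absolute continuity of the norm are invariant under $g\mapsto|g|$ is correct and worth recording, but it does not by itself supply a non-circular bridge from ``$G$ compact'' to any of the equivalent conditions.

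The paper's own proof avoids this loop entirely and stays at the level of pointwise analysis: take $u_n\wra u$ in $\Dp$, so that $u_n\to u$ a.e., whence $g|u_n|^p\to g|u|^p$ and $|g||u_n|^p\to|g||u|^p$ a.e.; the key pointwise identity is $\big|\,g|u_n|^p\,\big|=|g|\,|u_n|^p$, and the generalized dominated convergence theorem is then invoked directly to transfer convergence of one sequence of integrals to the other. No sign decomposition of $g$, no concentration functions, and no forward reference to the later characterisations are used.
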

\begin{proof}
 Let $u_n \wra u$ in $\Dp$. Then $u_n \ra u$, $g|u_n|^p \ra g|u|^p$ and $|g||u_n|^p \ra |g||u|^p$ a.e in $\Om$. Further,
 \begin{eqnarray} \label{equality}
 |g |u_n|^p| = |g| |u_n|^p.
 \end{eqnarray}
Now as $g \in \H$, both of $g|u_n|^p$, $g|u|^p$ belong to $L^1(\Om)$. Since equality occurs in \eqref{equality}, a direct application of generalized dominated convergence theorem proves the required equivalence.
\end{proof}

\begin{lem} \label{cpct}
 Let $g\in \H$ and $G: \Dp \mapsto \R$ is compact. Then, 
 \begin{enumerate}
  \item[(i)] if $(A_n)$ is a sequence of bounded measurable subsets such that $\chi_{A_n}$ decreases to $0,$ then $\norm{g \chi_{A_n}} \ra 0$ as $n \ra \infty$.
  \item[(ii)] $\norm{g \chi_{B_n^c}} \ra 0$
  as $n \ra \infty$.
 \end{enumerate}
 \end{lem}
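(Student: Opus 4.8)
The plan is to prove both statements by a contradiction argument that exploits the compactness of $G$ (equivalently $|G|$, by Proposition \ref{Gcpct}) against a sequence of suitably normalized test functions concentrating on the shrinking sets. The guiding heuristic is that the norm $\norm{g\chi_A}$ is, by Maz'ya's characterization, essentially the best constant in the Hardy inequality restricted to the weight $g\chi_A$, so a non-vanishing norm produces a bounded sequence in $\Dp$ whose images under $|G|$ stay bounded away from zero while the sequence itself ``escapes'' in the weak topology, contradicting compactness.

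For part $(i)$, suppose for contradiction that $\norm{g\chi_{A_n}} \not\to 0$. Since $\chi_{A_n}$ decreases, the sequence $\norm{g\chi_{A_n}}$ is non-increasing and non-negative, hence converges to some $\delta > 0$. First I would fix attention on the definition of the norm: for each $n$, choose a relatively compact set $F_n \cset \Om$ (and by density an associated function $w_n \in \mathcal{N}(F_n)$, i.e.\ $w_n \ge 1$ near $F_n$) realizing the supremum up to a factor, so that
\[
\frac{\int_{F_n} |g|\chi_{A_n}\,dx}{\cp(F_n,\Om)} \ge \frac{\delta}{2}.
\]
Normalizing $v_n := w_n / \norm{w_n}_{\D}$ gives a bounded sequence in $\Dp$ for which $\int_{\Om} |g|\chi_{A_n} |v_n|^p\,dx \gtrsim \delta$, while the fact that $\chi_{A_n}\to 0$ a.e.\ should force the weak limit to contribute nothing to $|G|$. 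The key point is that compactness of $|G|$ means $v_n \wra v$ implies $|G|(v_n) \to |G|(v)$; combining this with the a.e.\ decay of $\chi_{A_n}$ and the integrability $g|v|^p \in L^1(\Om)$ (Theorem \ref{Mazya's condition}) should yield $|G|(v_n\chi_{A_n}) \to 0$, contradicting the lower bound $\delta/2$. I would most likely route this through the absolute-continuity characterization of Theorem \ref{eqivthm1} if it is already available at this point in the text, since that theorem states precisely that a compact $G$ forces $\norm{g\chi_{A_n}} \to 0$ whenever $\chi_{A_n}\to 0$ a.e.; otherwise I would prove the contradiction directly from the concentrating test functions.

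For part $(ii)$, the statement $\norm{g\chi_{B_n^c}}\to 0$ is the ``loss of mass at infinity'' analogue. Here the sets $A_n = B_n^c$ again satisfy $\chi_{B_n^c}\to 0$ a.e.\ on $\Om$ (since every point eventually lies inside some ball), and $\chi_{B_n^c}$ decreases monotonically, so $(ii)$ is formally a special case of the mechanism in $(i)$. I would point out that the only subtlety distinguishing $(ii)$ from $(i)$ is that the sets $B_n^c$ are unbounded, whereas $(i)$ assumed bounded $A_n$; this is exactly why $(ii)$ is stated separately, and it is handled using the concentration-at-infinity quantity $\C_g(\infty)$ and the truncation functions $\Phi_R$ from Lemma \ref{lemmasmets} and Lemma \ref{mlc2}, which isolate the mass on $\overline{B_R}^c$.

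The main obstacle I anticipate is the unbounded case in $(ii)$: one cannot directly invoke local compactness of the embedding $\Dp \hookrightarrow L^p_{loc}(\Om)$ (Corollary \ref{cpctembedding}) on $B_n^c$, so the a.e.\ convergence of the weak limit argument must be supplemented by the escape-to-infinity control encoded in $\nu_\infty$ and $\Gamma_\infty$. Concretely, if $\norm{g\chi_{B_n^c}}\not\to 0$ one produces normalized test functions $v_n$ supported (up to small error) in $B_n^c$ with $|G|(v_n)$ bounded below; these converge weakly to $0$ locally, so compactness of $|G|$ would force $|G|(v_n)\to 0$, the desired contradiction. Making the ``supported in $B_n^c$ up to small error'' step rigorous, while respecting that $\cp(F\cap B_{2R}^c, \Om\cap\overline{B_R}^c) \le C_2\,\cp(F\cap B_{2R}^c,\Om)$ from Proposition \ref{propofcap1}$(ii)$, is where the technical care concentrates.
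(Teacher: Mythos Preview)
Your overall strategy is sound, but two points deserve correction and one deserves comparison.

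First, routing through Theorem \ref{eqivthm1} would be circular: this lemma is exactly what the paper uses (inside Theorem \ref{allinone}) to establish $(i)\Rightarrow(ii)$ there, which is Theorem \ref{eqivthm1}. You suspected as much; the direct argument is obligatory. Second, your direct argument for part $(i)$ has a step you glide over. From $v_n \wra v$ and compactness you get $\int_\Om |g||v_n|^p \to \int_\Om |g||v|^p$, but you still need $\int_{A_n}|g||v_n|^p \to 0$, and that does not follow from convergence of the full integral alone. It can be fixed: along a further subsequence $v_n \to v$ a.e.\ (Corollary \ref{cpctembedding}), so $|g||v_n|^p \to |g||v|^p$ a.e.\ with convergent integrals; Scheff\'e's lemma then gives $L^1$ convergence, and dominated convergence on $\int_{A_n}|g||v|^p$ finishes it. Interestingly, this route never uses the boundedness of $A_n$.

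The paper takes a quite different path for $(i)$. It uses boundedness of $A_n$ to get $|A_n|\to 0$ and hence $\int_{F_n\cap A_n}|g|\to 0$ (as $g\in L^1(A_1)$), forcing the \emph{un}-normalized test functions $u_n$ to $0$ strongly in $\Dp$. Since naive normalization would lose this, the paper instead builds auxiliary functions $w_n^\epsilon = |u_n|^p/\big((|u_n|+\epsilon)^{p-1}\norm{u_n}_\D\big)$, shows they are bounded in $\Dp$ with $\norm{w_n^\epsilon}_{p^*/p}\to 0$ (hence $w_n^\epsilon \wra 0$), so compactness gives $|G|(w_n^\epsilon)\to 0$ directly against a uniform lower bound. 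Your Scheff\'e route is shorter; the paper's route avoids Scheff\'e by manufacturing a sequence whose weak limit is explicitly zero.

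For part $(ii)$ your plan via Proposition \ref{propofcap1}$(ii)$ is exactly what the paper does: that proposition lets one choose near-optimal test functions supported in $\overline{B_{n/2}}^c$, which then converge weakly to $0$, and compactness yields the contradiction. Your references to $\Phi_R$, $\nu_\infty$, $\Gamma_\infty$ from Lemmas \ref{lemmasmets} and \ref{mlc2} are a red herring here; those tools belong to the measure-theoretic concentration analysis and play no role in this lemma.
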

\begin{proof} $(i)$ Let $(A_n)$ be a sequence of bounded measurable subsets such that $\chi_{A_n}$ decreases to $0$. If $\norm{g \chi_{A_n}} \nrightarrow 0$, then
$\exists a>0$ such that $\norm{g \chi_{A_n}} >a, \forall n$ (by the monotonicity of the norm).
  Thus, $\exists F_n \cset \Om$ and $ u_n \in  \mathcal{N}(F_n)$ such that  
 \begin{equation} \label{convDp}
 \int_{\Om} |\nabla u_n|^p\ dx < \frac{1}{a} \int_{F_n \cap A_n} |g|\ dx \leq \frac{1}{a}\int_{\{|u_n| \geq 1 \}} |g| |u_n|^p\ dx.
 \end{equation}
 Since $A_n$'s are bounded and $\chi_{A_n}$ decreases to $0$, it follows that $|A_n| \ra 0$, as $n \ra \infty.$ Further, as $g \in L^1(A_1)$, we also have $\displaystyle \int_{F_n \cap A_n} |g|\ dx \ra 0.$ 
 Hence from the above inequalities, $u_n \ra 0$ in $\Dp$. For $0< \epsilon <1$, consider $ w_n^{\epsilon}=\displaystyle \frac{|u_n|^p}{(|u_n|+ \epsilon)^{p-1} \norm{u_n}_{\D}} $.
 One can check that for each $n$, $w_n^{\epsilon} \in \Dp$ and it is bounded uniformly (with respect to $n$) in $\Dp$. Thus up to a sub sequence, $w_n^{\epsilon}$ 
 converges weakly to $w$ in $\Dp$ as $n \ra \infty$. Now using the embedding of $\Dp$ into $L^{p^*}(\Om)$ we obtain that
 $\displaystyle \norm{w_n^{\epsilon}}_{\frac{p^*}{p}} \leq C \frac{\norm{u_n}_{\D}^{p-1}}{\epsilon^{(p-1)}}.$
 Thus $\norm{w_n^{\epsilon}}_{\frac{p^*}{p}} \ra 0$ as $n \ra \infty$ and hence $w=0$ i.e. $w_n^{\epsilon} \wra 0$ in $\Dp$
 as $n \ra \infty$. By the compactness of $|G|$ we infer $\displaystyle \lim_{n\ra \infty} \int_{\Om} |g||w_n^{\epsilon}|^p \ dx =0.$
 On the other hand, for each $n \in \N$ and $0 < \epsilon <1$,  
 \begin{eqnarray*}
  \int_{\Om} |g| |w_n^{\epsilon}|^p \ dx &=& \int_{\Om}  \frac{|g| |u_n|^{p^2}}{(|u_n|+ \epsilon)^{p^2-p} \norm{u_n}_{\D}^p}\ dx
   \geq  \int_{|u_n| \geq \epsilon} \frac{|g| |u_n|^{p^2}}{(2|u_n|)^{p^2-p} \norm{u_n}_{\D}^p} \ dx \\
  &=&  \frac{1}{2^{p^2-p}}\int_{\{|u_n| \geq \epsilon\}} \frac{|g| |u_n|^{p}}{\norm{u_n}_{\D}^p} \ dx> \frac{a}{2^{p^2-p}} 
 \end{eqnarray*}
which is a contradiction.
 
\noi $(ii)$ If  $\norm{g \chi_{B_n^c}} \nrightarrow 0$, as $n \ra \infty,$ then there exists $F_n \cset \Om$ such that
   \begin{eqnarray*}
a < \frac{\int_{F_n \cap B_n^c}  |g|\ dx}{\cp(F_n,\Om)} \leq \frac{\int_{F_n \cap B_n^c} |g|\ dx}{\cp(F_n \cap B_n^c,\Om)} 
\leq \frac{C\int_{F_n \cap B_n^c} |g|\ dx}{\cp(F_n \cap B_n^c,\Om \cap \overline{B}_{\frac{n}{2}}^c) }
\end{eqnarray*}
 for some $a>0$ and $C>0$. Last inequality follows from the part $(ii)$ of Proposition \ref{propofcap1}.
 Thus, for each $n$ there exists $z_{n} \in \D^{1,p}_0(\Om \cap \overline{B}_{\frac{n}{2}}^c)$ with $z_n \geq 1$ on $F_n \cap B_n^c$ such that 
\[ \int_{\Om} |\nabla z_n|^p \ dx < \frac{C}{a} \int_{F_n \cap B_n^c} |g| \ dx \leq \frac{C}{a} \int_{\Om} |g||z_n|^p \ dx.\]
 By taking $w_n=\displaystyle \frac{z_n}{\norm{z_n}_{\D}}$ and following a same argument as in $(i)$ we contradict the compactness of $|G|$ and hence, that of $G$. 
\end{proof}

Next   for $\phi \in C_c^{\infty}(\Om)$ we compute $\C_{\phi}$.
 \begin{proposition} \label{Cgzero}
  Let $\phi \in C_c^{\infty}(\Om)$. Then $\C_{\phi} \equiv 0.$
 \end{proposition}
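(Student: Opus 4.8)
The plan is to reduce the entire statement to a single scaling estimate for the norm on $\H$, namely that for any bounded $\psi$ and any measurable $A \subseteq \Om$ with $|A| < \infty$ one has
\[\norm{\psi \chi_A} \le C(N,p)\, \norm{\psi}_\infty\, |A|^{\frac{p}{N}}.\]
Granting this, the proposition follows at once. For a finite point $x \in \overline{\Om}$, taking $A = B_r(x)$ gives $\norm{\phi \chi_{B_r(x)}} \le C \norm{\phi}_\infty (\om_N r^N)^{p/N}$, which tends to $0$ as $r \to 0$, so $\C_\phi(x) = 0$. For the point at infinity I would simply use that $\phi$ has compact support: once $B_R \supseteq \operatorname{Supp}(\phi)$ we have $\phi \chi_{B_R^c} = 0$ a.e., hence $\norm{\phi \chi_{B_R^c}} = 0$ and $\C_\phi(\infty) = 0$.

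To prove the displayed estimate I would unwind the definition of $\norm{\cdot}$. Fix $F \cset \Om$ with $|F| \ne 0$; we may assume $|F \cap A| \ne 0$, else the corresponding ratio vanishes. Since $\psi$ is bounded,
\[\frac{\int_F |\psi|\chi_A \dx}{\cp(F,\Om)} = \frac{\int_{F\cap A} |\psi| \dx}{\cp(F,\Om)} \le \norm{\psi}_\infty \frac{|F\cap A|}{\cp(F,\Om)}.\]
The set $F \cap A$ is relatively compact in $\Om$, and combining the monotonicity of the outer measure $\cp$ (Proposition \ref{propcap}(b)) with Proposition \ref{propcap}(a) (for $\Om \subseteq \R^N$) gives $\cp(F,\Om) \ge \cp(F\cap A,\Om) \ge \cp(F\cap A)$. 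Feeding this into the measure--capacity inequality $|F\cap A| \le C\, \cp(F\cap A)^{\frac{N}{N-p}}$ of Proposition \ref{propcap}(d), I obtain $\frac{|F\cap A|}{\cp(F,\Om)} \le C^{\frac{N-p}{N}} |F\cap A|^{\frac{p}{N}} \le C^{\frac{N-p}{N}} |A|^{\frac{p}{N}}$. Taking the supremum over all admissible $F$ then yields the claimed estimate.

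The one step requiring genuine care --- and which I regard as the crux --- is the chain of capacity inequalities that replaces $\cp(F,\Om)$ by $\cp(F\cap A)$ in the denominator. This is essential: bounding $|F\cap A|$ crudely by $|F|$ at the very start would produce $\sup_F |F|/\cp(F,\Om)$, which carries no smallness, so the gain must come precisely from intersecting with $A$ and then lowering the capacity to that of $F \cap A$. One must also check the two routine but necessary facts that the capacities appearing are the global (outer-measure) version for which monotonicity holds, and that $F \cap A$ is relatively compact in $\Om$ so that Proposition \ref{propcap}(d) applies. Everything else is bookkeeping.
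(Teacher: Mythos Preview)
Your proof is correct and follows essentially the same line as the paper's: both bound the numerator by $\norm{\phi}_\infty\,|F\cap B_r(x)|$, replace $\cp(F,\Om)$ in the denominator by the (smaller) capacity of $F\cap B_r(x)$, and then control the resulting ratio $|E|/\cp(E)$ by $C\,|E|^{p/N}$. The only packaging difference is that the paper reaches this last estimate via Schwarz symmetrization and the explicit formula $\cp(B_d)=N\om_N\left(\frac{N-p}{p-1}\right)^{p-1}d^{N-p}$ (Proposition~\ref{propcap}(e)), whereas you invoke the isocapacitary inequality of Proposition~\ref{propcap}(d) directly; your route is slightly more streamlined and also immediately yields the general estimate $\norm{\psi\chi_A}\le C\norm{\psi}_\infty|A|^{p/N}$, which is exactly what the paper records in Remark~\ref{boundedfunction}. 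One tiny citation point: the monotonicity $\cp(F\cap A,\Om)\le \cp(F,\Om)$ you need is for $\cp(\cdot,\Om)$, not for $\cp(\cdot,\R^N)$ as in Proposition~\ref{propcap}(b), but it is of course immediate from the definition since $\mathcal{N}(F)\subseteq \mathcal{N}(F\cap A)$.
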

 \begin{proof}
 First notice that for $\phi \in C_c^{\infty}(\Om),$
\begin{eqnarray*} 
  \norm{\phi \chi_{B_r(x)}} =  \sup_{F \cset \Om} \left[ \frac{\int_{F \cap B_r(x)}|\phi|\ dx}{\cp(F, \Om)}\right]  
 \leq  \sup_{F \cset \Om} \left[ \frac{ \sup (|\phi|) |(F\cap B_r)^{\star}|}{\cp((F \cap B_r)^{\star})}  \right]. \nonumber
 \end{eqnarray*}
 If $d$ is the radius of $(F \cap B_r)^{\star}$ then
 \[\frac{ |(F\cap B_r)^{\star}|}{\cp((F \cap B_r)^{\star})} = \frac{ \om_N d^N}{N \om_N (\frac{N-p}{p-1})^{p-1} d^{(N-p)}} = C(N,p) d^p \leq C(N,p) r^p .\]
 Thus, $\C_{\phi}(x) = \lim_{r \ra 0} \norm{\phi \chi_{B_r(x)}} =0 $. Also, one can easily see that $\C_{\phi}(\infty)=0 $ as $\phi$  has compact support .
 \end{proof}
 
 \begin{rmk} \label{boundedfunction} \rm
In fact, the same arguments as in the above proposition shows that $\C_g \equiv 0$ if $g \in L^{\infty}(\Om)$ and $g$ has compact support. 
 \end{rmk}

The next theorem  proves Theorem \ref{eqivthm}, Theorem \ref{eqivthm1} and Theorem \ref{eqivthm2} in one shot.
  \begin{thm}
 \label{allinone}
  Let $g \in \H$. Then the following statements are equivalent:
  \begin{enumerate}
   \item[(i)] $G: \Dp \mapsto \R $ is compact,
   \item[(ii)] $g$ has absolute continuous norm in $\H$,
   \item[(iii)] $g \in \F(\Om)$,
   \item[(iv)] $\C^*_g =0= \C_g(\infty)$.
  \end{enumerate}
  \end{thm}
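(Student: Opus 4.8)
The plan is to prove the cyclic chain of implications $(i) \Rightarrow (iii) \Rightarrow (ii) \Rightarrow (iv) \Rightarrow (i)$, which closes the loop most economically. Throughout, I would exploit that the norm $\norm{\cdot}$ sees only $|g|$, so $\C_g = \C_{|g|}$, and that by Proposition \ref{Gcpct} compactness of $G$ is equivalent to compactness of $|G|$; this lets me assume $g \geq 0$ wherever a sign is convenient. The capacity estimate of Lemma \ref{cpct}, the characterization of $\F(\Om)$ in Lemma \ref{charF}, and the concentration compactness package of Lemmas \ref{mlc1}--\ref{mlc2} are the main ingredients.

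For $(i) \Rightarrow (iii)$ I would truncate $g$ in space and in amplitude. Given $\epsilon > 0$, Lemma \ref{cpct}$(ii)$ provides $R$ with $\norm{g\chi_{B_R^c}} < \epsilon/2$. On the finite-measure ball $B_R$, the sets $A_M := (B_R \cap \{|g| > M\}) \setminus \{|g| = \infty\}$ satisfy $\chi_{A_M} \downarrow 0$ pointwise, using that $g \in L^1_{loc}(\Om)$ is finite a.e.\ on $B_R$; hence Lemma \ref{cpct}$(i)$ yields $M$ with $\norm{g\chi_{B_R \cap \{|g| > M\}}} < \epsilon/2$. Then $g_\epsilon := g\,\chi_{B_R}\,\chi_{\{|g| \leq M\}} \in L^\infty(\Om)$ has finite-measure support and $\norm{g - g_\epsilon} < \epsilon$, so Lemma \ref{charF} gives $g \in \F(\Om)$.

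For $(iii) \Rightarrow (ii)$, fix $(A_n)$ with $\chi_{A_n} \to 0$ a.e.\ and $\epsilon > 0$, and choose $\phi \in C_c^\infty(\Om)$ with $\norm{g - \phi} < \epsilon$. By monotonicity of the norm, $\norm{g\chi_{A_n}} \leq \norm{g - \phi} + \norm{\phi\chi_{A_n}}$. With $K := \mathrm{Supp}(\phi)$, dominated convergence gives $|K \cap A_n| \to 0$, while the embedding $L^{\frac{N}{p},\infty}(\Om) \hookrightarrow \H$ of Section \ref{Em} yields $\norm{\chi_A} \leq C|A|^{p/N}$; therefore $\norm{\phi\chi_{A_n}} \leq \norm{\phi}_\infty\,\norm{\chi_{K \cap A_n}} \to 0$, so $\limsup_n \norm{g\chi_{A_n}} \leq \epsilon$ and $g$ has absolute continuous norm. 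The step $(ii) \Rightarrow (iv)$ is then immediate from Definition \ref{ABnorm}: applying absolute continuity to $A_n = B_{r_n}(x)$ with $r_n \downarrow 0$ (noting $\chi_{B_{r_n}(x)} \to 0$ a.e.) and using monotonicity in $r$ forces $\C_g(x) = 0$ for every $x \in \overline\Om$, and applying it to $A_n = B_n^c$ forces $\C_g(\infty) = 0$.

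The crux is $(iv) \Rightarrow (i)$, where the concentration compactness lemmas carry the analytic weight. Assuming $g \geq 0$ and $u_n \wra u$ in $\Dp$, I would pass along an arbitrary subsequence to a further subsequence for which $\nu_n \wrastar \nu$ and $\Gamma_n \wrastar \Gamma$. Lemma \ref{mlc1} with $\C_g^* = 0$ forces $\nu \equiv 0$, and Lemma \ref{mlc2}$(i)$ with $\C_g(\infty) = 0$ forces $\nu_\infty = 0$; then Lemma \ref{mlc2}$(ii)$ gives $\limsup_n \int_\Om g|u_n|^p\,dx = \int_\Om g|u|^p\,dx$. Since $u_n \to u$ a.e.\ (Corollary \ref{cpctembedding}) and $g \geq 0$, Fatou's lemma gives $\liminf_n \int_\Om g|u_n|^p\,dx \geq \int_\Om g|u|^p\,dx$, so $G(u_n) \to G(u)$ along the subsequence; as every subsequence admits such a sub-subsequence with the same limit $G(u)$, the full sequence converges and $G$ is compact. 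I expect this last implication to be the main obstacle, precisely because it requires extracting the vanishing of both concentration measures $\nu$ and $\nu_\infty$ from $(iv)$ and then upgrading the one-sided $\limsup$ identity of Lemma \ref{mlc2} to a genuine limit via the subsequence-plus-Fatou argument.
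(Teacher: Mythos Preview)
Your argument is correct and uses the same toolkit as the paper (Lemma \ref{cpct}, Lemma \ref{charF}, and the concentration package of Lemmas \ref{mlc1}--\ref{mlc2}), but you close the loop in a different order: the paper runs $(i)\Rightarrow(ii)\Rightarrow(iii)\Rightarrow(iv)\Rightarrow(i)$, whereas you run $(i)\Rightarrow(iii)\Rightarrow(ii)\Rightarrow(iv)\Rightarrow(i)$. Your $(i)\Rightarrow(iii)$ is effectively the paper's $(i)\Rightarrow(ii)\Rightarrow(iii)$ collapsed into a single step, invoking Lemma \ref{cpct} directly on the truncation sets rather than passing through absolute continuity first. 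The genuinely new link is your $(iii)\Rightarrow(ii)$: the paper instead proves $(iii)\Rightarrow(iv)$ via the explicit capacity computation in Proposition \ref{Cgzero}, while you obtain absolute continuity from the embedding $L^{\frac{N}{p},\infty}(\Om)\hookrightarrow\H$, which gives $\norm{\chi_A}\le C|A|^{p/N}$ and hence $\norm{\phi\chi_{A_n}}\to 0$ for $\phi\in C_c^\infty(\Om)$; this is a clean alternative that avoids the direct ball-capacity calculation. Finally, your treatment of $(iv)\Rightarrow(i)$ is slightly more careful than the paper's: Lemma \ref{mlc2}(ii) only controls the $\limsup$, and you correctly supplement it with Fatou and a sub-subsequence argument to upgrade to a full limit, a point the paper glosses over.
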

\begin{proof}
 $(i) \implies (ii):$ Let $G$ be compact. Take a sequence of measurable subsets $(A_n)$ 
 of $\Om$ such that $\chi_{A_n}$ decreases to $0$ a.e. in $\Om$. Part $(ii)$ of Lemma \ref{cpct} gives $\norm{g \chi_{B_n^c}} \ra 0$, as $n \ra \infty$. Choose $\epsilon >0$ arbitrarily. There exists $N_0 \in \N,$ 
such that $\norm{g \chi_{B_n^c}} \leq \frac{\epsilon}{2}, \forall n \geq N_0.$ Now $A_n= (A_n \cap B_{N_0}) \cup (A_n \cap B_{N_0}^c)$,  for each $n$. Thus, 
\[\norm{g \chi_{A_n}} \leq \norm{g \chi_{A_n \cap B_{N_0}}} + \norm{g \chi_{A_n \cap B_{N_0}^c}} \leq \norm{g \chi_{A_n \cap B_{N_0}}} + \frac{\epsilon}{2}.\]
 By part $(i)$ of Lemma \ref{cpct}, there exists $N_1(\geq N_0) \in \N$ such that $\norm{g \chi_{A_n \cap B_{N_0}}} \leq \frac{\epsilon}{2}, \ \forall n \geq N_1$ and hence $\norm{g \chi_{A_n}} \leq \epsilon$ for all $n \geq N_1$. 
 Therefore, $g$ has absolutely continuous norm.

\noi $(ii) \implies (iii):$ Let $g$ has absolute continuous norm in $\H$. Then, $\norm{g \chi_{B_m^c}} $ converge to $0$ as $m \ra \infty$. Let $\epsilon >0$ be arbitrary. We choose $m_{\var} \in \N$ such that $\norm{g \chi_{B_m^c}} < \epsilon$, $\forall m \geq m_{\var}$. Now for any $n \in \N$,
\[g = g \chi_{\{|g| \leq n\} \cap B_{m_{\var}}} + g \chi_{\{|g| >n\} \cap B_{m_{\var}}} + g \chi_{B_{m_{\var}}^c} := g_n + h_n.\]
where $g_n=g \chi_{\{|g| \leq n\} \cap B_{m_{\var}}}$ and $h_n=g \chi_{\{|g| >n\} \cap B_{m_{\var}}} + g \chi_{B_{m_{\var}}^c}.$ Clearly, $g_n \in L^{\infty}(\Om)$ and $|Supp(g_n)| < \infty $. 
Furthermore,
\[ \norm{h_n} \leq \norm{g \chi_{\{|g| >n\} \cap B_{m_{\var}}}} + \norm{g \chi_{B_{m_{\var}}^c}} <  \norm{g \chi_{\{|g| >n\} \cap B_{m_{\var}}}} + \epsilon \,. \]
 Now, $g \in L^1_{loc}(\Om)$ ensures that $\chi_{\{|g| >n\} \cap B_{m_{\var}}} \ra 0$ as $n\ra \infty$. As $g$ has absolutely continuous norm, $\norm{g \chi_{\{|g| >n\} \cap B_{{m_{\var}}}}} < \epsilon$ for large $n$. Therefore, $\norm{h_n}< 2\epsilon$ for large $n$.
 Hence, Lemma \ref{charF} concludes that $g \in \F(\Om)$.
 
 \noi $(iii) \implies (iv):$ Let $g \in \F(\Om)$ and $\epsilon >0$ be arbitrary. Then there exists $g_\var \in C_c^{\infty}(\Om)$ such that
 $\norm{g-g_\var} < \epsilon$. Thus Proposition \ref{Cgzero} infers that $\C_{g_\var}$ vanishes.
Now as $g = g_\var + (g-g_\var)$, it follows that $\C_g(x) \leq \C_{g_\var}(x) + \C_{g - g_\var}(x) \leq \norm{g - g_\var} < \epsilon$ and hence $\C^*_g=0$. By a similar argument one can show $\C_g(\infty)=0.$

\noi $(iv) \implies (i):$ Assume that $\C^*_g =0= \C_g(\infty)$. Let $(u_n)$ be a bounded sequence in $ \Dp$. Then by Lemma \ref{mlc2}, up to a sub-sequence we have,
 \begin{eqnarray*}
 \nu_{\infty} &\leq& C_H\ \C_g(\infty)  \Gamma_{\infty} \label{1},\\
 \norm{\nu} &\leq& C_H \C^{*}_g \norm{\Gamma} \label{2}, \\
 \lim_{n \ra \infty} \int_{\Om} |g||u_n|^p \ dx &=& \int_{\Om} |g||u|^p \ dx + \norm{\nu} + \nu_{\infty} \label{3}.
 \end{eqnarray*}
 As $\C^*_g=0= \C_g(\infty)$ we immediately conclude that
 $\displaystyle \lim_{n \ra \infty} \int_{\Om} |g||u_n|^p \ dx = \int_{\Om} |g||u|^p \ dx $
 and hence $G: \Dp \mapsto \R$ is compact (Proposition \ref{Gcpct}).
 \end{proof}
\begin{rmk}[Rellich compactness theorem]  \rm 
Let $\Om$ be a bounded domain in $\R^N$ and $g\equiv 1$ on $\Om$. Then, by Remark \ref{boundedfunction}, $\C_g \equiv 0$ and hence, by the above equivalence $G$ is compact on $\Dp$ i.e., $\Dp$ is compactly embedded into $L^p(\Om)$.
\end{rmk}

\begin{rmk} \rm \label{nexistrmk}
 Let $N>p$ and $g(x)= \frac{1}{|x|^p}$ in $\R^N$.
Then for any $r>0$, using Proposition \ref{propcap} we get
 \[\displaystyle \frac{\int_{B_r(0)} \frac{dx}{|x|^p}}{\cp(B_r(0))} = \frac{(p-1)^{p-1}}{(N-p)^p}. \] 
 Thus $\C_g(0) = \frac{(p-1)^{p-1}}{(N-p)^p}$ and hence $\displaystyle g \notin \F(\R^N).$
\end{rmk}
\begin{rmk} \rm \label{contabscont}
   Let $X = (X(\Om),\norm{.}_X )$ be a Banach function space and $f \in X$. Then $f$ is said to have continuous norm in $X$, if for each $x \in \Om$,  $\norm{f \chi_{B_r(x)}}$ converges to $ 0  $, as $r \ra 0$. 
   Observe that by  Theorem \ref{allinone}, the set of all functions having continuous norm and the set of all function having absolute continuous norm are one and the same on $\H$. However, in \cite{Lang},
  authors constructed a Banach function space where these two sets are different.
 \end{rmk}
 
 Now, we recall Maz'ya's concentration function $\Pi_g$ ,(see Section 2.4.2, page 130 of \cite{Mazya}).
 For $F \cset \Om$ with $|F| \neq 0$, let $\Pi (F,g,\Om):= \frac{\int_{F}|g|\ dx}{\cp(F,\Om)}$. Then
\begin{align*}
   \Pi_g(x)  & =  \lim_{r \ra 0}  \sup \{ \Pi(g,F,\Om) :  F \cset \Om \cap B_r(x) \},  \\
  \Pi_g(\infty)  &=  \lim_{r \ra \infty} \ \sup \left\{ \ \Pi(g,F,\Om) : F\cset \Om \cap B_r(0)^c \right\}.
 \end{align*}
 Next proposition shows that  $\C_g$ coincides with $\Pi_g$. As  $\C_g$ measures the concentration using the norm of $\H$, we prefer $\C_g$ over $\Pi_g.$
\begin{proposition} \label{unilemma}
 Let $g \in \H$. Then $\C_g(x) = \Pi_g(x)$ for all $x \in \overline{\Om}$ and $\C_g(\infty)= \Pi_g(\infty)$.
 \end{proposition}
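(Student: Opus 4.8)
The plan is to prove the identity by comparing, at each fixed radius, the two suprema that define $\C_g(x)$ and $\Pi_g(x)$, and then passing to the limit. First I would rewrite the norm explicitly: since $\norm{\cdot}$ is a K\"othe function norm,
\[
\norm{g\chi_{B_r(x)}} = \sup\left\{ \frac{\int_{F\cap B_r(x)}|g|\,dx}{\cp(F,\Om)} : F\cset\Om,\ |F|\ne 0\right\},
\]
so that $\C_g(x)=\lim_{r\to 0}\norm{g\chi_{B_r(x)}}$, while $\Pi_g(x)=\lim_{r\to 0}\Psi(r)$ with $\Psi(r):=\sup\{\Pi(g,F,\Om):F\cset\Om\cap B_r(x)\}$. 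Both $r\mapsto\norm{g\chi_{B_r(x)}}$ and $r\mapsto\Psi(r)$ are non-decreasing (enlarging the ball only increases $|g|\chi_{B_r(x)}$ pointwise, respectively adds admissible sets $F$), so both limits exist as the infima over $r>0$. It therefore suffices to sandwich the two quantities for each $r$.

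The easy inclusion $\Pi_g(x)\le\C_g(x)$ follows by noting that every $F\cset\Om\cap B_r(x)$ satisfies $F\cap B_r(x)=F$, so such $F$ is one of the competitors in the supremum defining $\norm{g\chi_{B_r(x)}}$; hence $\Psi(r)\le\norm{g\chi_{B_r(x)}}$ for every $r$, and passing to the limit gives $\Pi_g(x)\le\C_g(x)$. For the reverse inequality I would invoke the monotonicity of $p$-capacity (Proposition \ref{propcap}). Given any $F\cset\Om$, the set $F\cap B_r(x)$ is again relatively compact in $\Om$, since its closure lies in the compact set $\overline{F}\cap\overline{B_r(x)}\subseteq\Om$; thus $\cp(F\cap B_r(x),\Om)\le\cp(F,\Om)$ and
\[
\frac{\int_{F\cap B_r(x)}|g|\,dx}{\cp(F,\Om)}\le \frac{\int_{F\cap B_r(x)}|g|\,dx}{\cp(F\cap B_r(x),\Om)}=\Pi(g,F\cap B_r(x),\Om).
\]

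The one point requiring care, which I expect to be the only genuine obstacle, is that $F\cap B_r(x)$ need not be relatively compact in the \emph{open} ball $\Om\cap B_r(x)$, because its closure may meet $\partial B_r(x)$; hence $F\cap B_r(x)$ is not directly admissible in $\Psi(r)$. I would circumvent this by enlarging the ball infinitesimally: for any $r'>r$ one has $\overline{F\cap B_r(x)}\subseteq\overline{B_r(x)}\subset B_{r'}(x)$, so $F\cap B_r(x)\cset\Om\cap B_{r'}(x)$ and therefore $\Pi(g,F\cap B_r(x),\Om)\le\Psi(r')$. Taking the supremum over $F\cset\Om$ yields $\norm{g\chi_{B_r(x)}}\le\Psi(r')$ for all $r'>r$; letting first $r\to 0$ and then $r'\to 0$ gives $\C_g(x)\le\Pi_g(x)$, which combined with the easy inclusion proves $\C_g(x)=\Pi_g(x)$.

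Finally, the statement at infinity follows the same scheme with the exterior regions $B_R(0)^c$ replacing $B_r(x)$, and here the argument is in fact cleaner. Since $B_R(0)^c$ is closed, for $F\cset\Om$ the set $F\cap B_R(0)^c$ has closure contained in $\overline{F}\cap B_R(0)^c\subseteq\Om\cap B_R(0)^c$ and is thus already admissible for the supremum defining $\Pi_g(\infty)$, so no enlargement is needed. The monotonicity of capacity then gives $\norm{g\chi_{B_R(0)^c}}\le\sup\{\Pi(g,F,\Om):F\cset\Om\cap B_R(0)^c\}$, while the trivial reverse inequality follows as above; letting $R\to\infty$ yields $\C_g(\infty)=\Pi_g(\infty)$.
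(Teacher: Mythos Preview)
Your proof is correct and follows essentially the same route as the paper. The only cosmetic difference is that the paper bounds $\norm{g\chi_{B_r(x)}}$ by $\Psi(2r)$ (observing that $V\cap B_r(x)\cset\Om\cap B_{2r}(x)$) and then lets $r\to 0$, whereas you use an arbitrary $r'>r$; the underlying idea---replace $F$ by $F\cap B_r(x)$, use monotonicity of capacity, and note that the intersection is relatively compact in a slightly larger ball---is identical.
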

 \begin{proof}
First notice that  $\Pi_g(x) \leq \C_g(x)$, for any $x \in \overline{\Om}$ and $\Pi_g(\infty) \leq \C_g(\infty)$.
On other hand for $V \cset \Om$,
\begin{eqnarray*}
 \displaystyle \frac{\int_{V} |g| \chi_{B_r(x)}\ dx}{\cp(V,\Om)} \leq \frac{\int_{V \cap B_r(x)} |g| \ dx}{\cp(V \cap B_r(x),\Om)} \leq \sup_{F \cset \Om \cap B_{2r}(x)} \left[ \frac{\int_{F}|g|\ dx}{\cp(F, \Om)} \right]=\Pi^{2r}_g(x).
\end{eqnarray*}
The last inequality follows as $V \cap B_r(x)$ is relatively compact in $ \Om \cap B_{2r}(x)$. Taking the supremum over all $V \cset \Om$ and letting $r \ra 0$ we obtain
$\C_g(x) \leq \Pi_g(x)$. By a similar argument we also get $\C_g(\infty) \leq \Pi_g(\infty)$ as required.  
\end{proof}

\section{A concentration compactness criteria} \label{Existence}
  Recall that, for $g \in \H$,   the best constant $B_g$ in \eqref{HS} is given by
 \begin{eqnarray*}
 \frac{1}{B_g} = \inf_{u\in  G^{-1}\{1\}}  \int_{\Om} |\nabla u|^p \ dx.
 \end{eqnarray*} 

	\noi{\bf Proof of Theorem \ref{exismin}.}
Let $(u_n) \in G^{-1}\{1\}$ be a sequence that minimizes $ \displaystyle \int_{\Om} |\nabla u|^p\ dx$ over $G^{-1} \{1\}.$ Then up to a sub-sequence
we can assume that  $ u_n \wra u $ in $ \Dp$ and $u_n \ra u$ a.e. in $\Om$. Further, $ |\nabla u_n - \nabla u|^p \wrastar \Gamma $, $|\nabla u_n| \wrastar \tilde{\Gamma}$, $g|u_n - u|^p \wrastar \nu$  in $\mathbb{M}(\R^N)$. 
Since $u_n\in  G^{-1} \{1\}$, using  Lemma \ref{mlc2} we have \[1= \int_{\Om} g |u|^p\ dx + \norm{\nu} + \nu_{\infty} .\] 
Suppose  $\norm{\nu}$ or $\nu_{\infty}$ is nonzero. Then $\C_g^*$ or $\C_g(\infty) \neq 0$ respectively. Now using Hardy-Sobolev inequality and Lemma \ref{mlc2}, we obtain the following estimate:
\begin{align*}
1=B_g \times \overline{\lim}_{n \ra \infty} \int_{\Om} |\nabla u_n|^p\ dx &\geq  B_g \left[ \int_{\Om} |\nabla u|^p \ dx+  \frac{\norm{\nu}}{C_H \C_g^*} + \Gamma_{\infty} \right] \\
& \geq B_g \left[ \frac{1}{B_g} \int_{\Om} g|u|^p \ dx +  \frac{\norm{\nu}}{C_H \C_g^*} + \frac{\nu_{\infty}}{C_H \C_g(\infty)} \right] \\
& > \frac{B_g}{B_g} \left[\int_{\Om} g|u|^p\ dx +  \norm{\nu} + \nu_{\infty}\right],
\end{align*}
a contradiction. Thus $\norm{\nu}=0=\nu_{\infty}.$
Therefore, $\displaystyle \int_{\Om} g|u|^p\ dx=1$ and consequently, $B_g$ is attained at $u$. 

\begin{rmk} \rm
 For $g(x)= \frac{1}{|x|^p}$ in $\R^N$, it is well known that $B_g$ is not attained in $\Dp$. Further, $ \C_g(0)= \frac{(p-1)^{p-1}}{(N-p)^p}$ and hence $C_H \C^{*}_g = B_g.$
\end{rmk}

\begin{rmk} \rm \label{weakrmk}
Recall the definition of $S_g(x)$. In \cite{Smets}, author also considered the following quantities : 
\begin{align*}
  S_{g}(x) & :=  \lim_{r\ra 0} \inf \left \{\int_{\Om} |\nabla u|^p \ dx : u\in \D^{1,p}_0 (\Om \cap B_r(x)), \ \int_{\Om} g|u|^p \ dx =1 \right \} \,, \\
 S_g^* & :=  \sup_{x \in \overline{\Om}} S_g(x),
 \\
 S_{g}(\infty) & :=  \lim_{R\ra \infty} \inf \left \{\int_{\Om} |\nabla u|^p \ dx : u\in \D^{1,p}_0 (\Om \cap B_R^c), \ \int_{\Om} g|u|^p \ dx =1 \right \}, \\ 
  S_{g} & :=   \inf \left \{\int_{\Om} |\nabla u|^p \ dx: u\in \D^{1,p}_0 (\Om), \ \int_{\Om} g|u|^p \ dx =1 \right \}.
 \end{align*}
 Since $S_g(.)$ captures the best constant in the Hardy inequality locally at the points of $\Om$ and at the infinity, by  \eqref{estimate}, we have 
\begin{eqnarray}\label{S_gC_g}
 \norm{g} \leq \frac{1}{S_g} \leq C_H \norm{g}, \quad \C^*_g \leq \frac{1}{S_{g}^*} \leq C_H \C^*_g, \quad \C_g(\infty) \leq \frac{1}{S_g(\infty)} \leq C_H \C_g(\infty).
\end{eqnarray}
 Therefore, if $C_H \C^*_g < \norm{g} $ and $C_H \C_g(\infty) < \norm{g}$ then  $S_g < S_{g}^*$ and $S_g < S_g(\infty) $. Thus, if in addition $\overline{\sum_g}$ is countable, then Theorem \ref{exismin} also follow from Theorem 3.1 of \cite{Smets}. Therefore, our sufficient condition is slightly weaker than that of \cite{Smets}. This is mainly because of the gap in the Hardy inequality given in \ref{Mazya's condition} (see \eqref{estimate}). However, on the other hand, our sufficient condition assumes $|\overline{\sum_g}|=0$ instead of its countability. 
\end{rmk}

\begin{rmk} \rm
In \cite{Smets}, Smets proved the Mazya's compactness criteria by showing  that $G$ is compact if and only if $S_g^* = S_g(\infty)= \infty$. Observe that, one can easily derive this result by using \eqref{S_gC_g} together with Theorem \ref{eqivthm2}.
\end{rmk}

\noi{\bf Proof of Theorem \ref{thmperturb}.} Let $h \in \H$ be non-negative and $|\overline{\sum_h}|=0$. Take a non-zero, non-negative $\phi \in \F(\Om)$ and $\epsilon_0 = \frac{(2C_H-1) \norm{h}}{\norm{\phi}},$ then for $ \epsilon > \epsilon_0 $, let $g = h+ \epsilon \phi.$
Clearly, $|\overline{\sum_g}|=0$ and 
\begin{eqnarray*}
  C_H \C^{*}_{g} =C_H \C^{*}_{h+ \epsilon \phi} = C_H \C^{*}_h \leq C_H \norm{h} <  \frac{\norm{h} + \epsilon \norm{\phi}}{2} \leq \norm{g} \leq B_g. 
 \end{eqnarray*}
 Similarly, we can show $C_H \C_{g}(\infty) < \norm{g } \leq B_g.$ Therefore, by Theorem \ref{exismin}, $B_g$ is attained.

\begin{rmk} \label{example} \rm 
$(i)$. For $2 \leq k < N$ and for $z\in \R^N,$ we write $z=(x,y)\in \R^{k} \times \R^{N-k}$. Now consider $g(z)=\frac{1}{|x|^p}$ in $\R^{k} \times \R^{N-k}.$ By Theorem 2.1 of \cite{Tarantello},  $g \in \mathcal{H}(\R^N)$ if $p<k$. Next we show that $\sum_g = \{0\} \times \R^{N-k}$. For  any $(0,y) \in \R^{k} \times \R^{N-k}$ and $r>0$, using the translation invariance of both the integral and the $\cp$, we have
$$\frac{\int_{B_r(0,y)}g(z) \ dz}{\cp(B_r(0,y))} = \frac{\int_{B_r(0,0)} \frac{1}{|x|^p} \ dz}{\cp(B_r(0,0))}\ge \frac{\int_{B_r(0,0)} \frac{1}{|z|^p} \ dz}{\cp(B_r(0,0))}.$$
Now by taking $r \ra 0$ we have $\C_g(0,y) \geq \C_{\frac{1}{|z|^p}}((0,0))>0$ and hence $\sum_g \supseteq \{0\} \times \R^{N-k}$. Next for $z_0=(x_0,y_0) \notin \{0\} \times \R^{N-k}$,  let $0<r<|x_0.|$ Then by Proposition \ref{propcap} we obtain
$$\frac{\int_{B_r(z_0)} \frac{1}{|x|^p} \ dz}{\cp(B_r(z_0))} \leq \frac{ \frac{1}{(|x_0|-r)^p} \int_{B_r(z_0)} \ dz}{\cp(B_r(z_0))}= \left(\frac{p-1}{N-p}\right)^{p-1} \left(\frac{r^p}{N(|x_0|-r)^p } \right).$$
Now by taking $r \ra 0$, we obtain $\C_g(z_0)=0$.
Hence, $\sum_g=\{0\} \times \R^{N-k}$. 

\noi $(ii)$. Let $2\leq k<N$, $p<k$. We consider $g(z)=\frac{1}{|x|^p},$ for $z=(x,y) \in \R^k \times \R^{N-k}$. In Example \ref{example}, we have seen that
$g \in \H$ with $\sum_g$ is uncountable and $|\overline{\sum_g}|=0$. Now choose any $\phi \in \F(\Om)$ such that $\norm{\phi}=2(2C_H-1) \norm{g}$ and consider $\tilde{g}:=g + \phi$. Then  $\epsilon_0=\frac{1}{2}$ and hence that $B_{\tilde{g}}$ is attained (by Theorem \ref{thmperturb}). Further, in Example \ref{example}, we have seen that $\sum_{\tilde{g}}$ is also uncountable and $|\overline{\sum_{\tilde{g}}}|=0$. Thus, $\tilde{g}$ lies outside the class of functions considered in \cite{Smets,Tertikas}.
\end{rmk}

\begin{corollary} \label{distrmk}
Let $g \in \H$ with $g \geq 0$ and $|\overline{\sum_g}|=0$.
If $C_H  dist(g, \F(\Om)) < \norm{g} ,$ then $B_g$ is attained in $\Dp$.
\end{corollary}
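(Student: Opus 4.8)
The plan is to read this off directly from Theorem \ref{exismin}. Since $g$ is assumed non-negative with $\left|\overline{\sum_g}\right|=0$, the only remaining hypothesis of Theorem \ref{exismin} to be checked is the strict inequality $C_H \C_g(x) < B_g$ at every point $x \in \overline{\Om} \cup \{\infty\}$. I would reduce this to the single estimate $\C_g(x) \le dist(g, \F(\Om))$ valid for all such $x$; once this is in hand, the standing assumption $C_H\, dist(g, \F(\Om)) < \norm{g}$ together with the lower bound $\norm{g} \le B_g$ from \eqref{estimate} immediately gives $C_H\C_g(x) < B_g$, closing the argument.

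To establish $\C_g(x) \le dist(g, \F(\Om))$, I would exploit the subadditivity of the concentration function, exactly as in the proof of $(iii)\Rightarrow(iv)$ of Theorem \ref{allinone}. Fix any $\psi \in \F(\Om)$ and write $g = (g-\psi) + \psi$. Using the triangle inequality for $\norm{.}$ on $\H$ and then the K\"othe monotonicity property (applied to $|(g-\psi)\chi_{B_r(x)}| \le |g-\psi|$), one obtains, for each ball $B_r(x)$,
\[ \norm{g \chi_{B_r(x)}} \le \norm{(g-\psi)\chi_{B_r(x)}} + \norm{\psi \chi_{B_r(x)}} \le \norm{g-\psi} + \norm{\psi \chi_{B_r(x)}}. \]
Letting $r \ra 0$ and recalling that $\C_\psi \equiv 0$ for every $\psi \in \F(\Om)$ (this is precisely the implication $(iii)\Rightarrow(iv)$ of Theorem \ref{allinone}) yields $\C_g(x) \le \norm{g-\psi}$. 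Running the identical computation with the cutoff $\chi_{B_R(0)^c}$ in place of $\chi_{B_r(x)}$ and sending $R \ra \infty$ gives $\C_g(\infty) \le \norm{g-\psi}$.

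Taking the infimum over $\psi \in \F(\Om)$ in both inequalities produces $\C^*_g \le dist(g, \F(\Om))$ and $\C_g(\infty) \le dist(g, \F(\Om))$. Consequently $C_H \C_g(x) \le C_H\, dist(g, \F(\Om)) < \norm{g} \le B_g$ for every $x \in \overline{\Om} \cup \{\infty\}$, so every hypothesis of Theorem \ref{exismin} is satisfied and $B_g$ is attained in $\Dp$.

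I expect no genuine obstacle here, since the substance is just the subadditive bound $\C_g(x) \le \norm{g-\psi}$, a minor variant of a computation already performed. The one point deserving care is that the distance is taken to the whole space $\F(\Om)$ rather than merely to $C_c^\infty(\Om)$; this causes no trouble precisely because $\C_\psi$ vanishes on all of $\F(\Om)$, not only on smooth compactly supported functions, so the infimum over $\psi \in \F(\Om)$ legitimately reproduces $dist(g,\F(\Om))$.
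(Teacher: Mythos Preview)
Your proposal is correct and follows essentially the same route as the paper: establish the subadditivity $\C_g(x)\le \C_{g-\psi}(x)+\C_\psi(x)$, use that $\C_\psi\equiv 0$ for $\psi\in\F(\Om)$ (via Theorem~\ref{allinone}), deduce $\C_g(x)\le\norm{g-\psi}$, and then invoke Theorem~\ref{exismin} together with \eqref{estimate}. The only cosmetic difference is that the paper picks a single $\phi\in\F(\Om)$ with $C_H\norm{g-\phi}<\norm{g}$ rather than passing through the infimum, but this is equivalent.
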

\begin{proof}
 For $ g, h \in L^1_{loc}(\Om)$ and $F \cset \Om $, 
 \[ \frac{\int_F |g| \chi_{B_r(x)}\ dx}{\cp(F, \Om)} \leq \frac{\int_F |g-h| \chi_{B_r(x)}\ dx}{\cp(F, \Om)} + \frac{\int_F |h| \chi_{B_r(x)}\ dx}{\cp(F, \Om)}. \]
 By taking the supremum over all such $F$ and $r$ tends to $0$ respectively, we obtain $\C_g(x) \leq \C_{g-h}(x) + \C_{h}(x) $ and hence
 \begin{eqnarray} \label{claim}
 \C^{*}_g \leq \C^{*}_{g- h} + \C^{*}_{h}. 
 \end{eqnarray}
Now as $C_H  dist( g, \F(\Om)) < \norm{g} $,
 $\exists \phi \in \F(\Om)$ such that $C_H \norm{g - \phi} < \norm{g}.$ Thus by \eqref{claim},
 $C_H \C^{*}_{g} \leq  C_H \C^{*}_{g - \phi} \leq C_H \norm{g - \phi} < \norm{g} \leq B_g$ and similarly $C_H \C_{g}(\infty) <  B_g .$  Now the result follows from Theorem \ref{exismin}.
\end{proof}

Next proposition also gives us another way to produce the Hardy potential for which $B_g$ is attained in $\Dp$ without $G$ being compact.
 \begin{proposition}\label{posipartrmk}
  Let $g \in L^1_{loc}(\Om)$ be such that $g^+ \in \F(\Om)$. Then the best constant $B_g$ is attained.
 \end{proposition}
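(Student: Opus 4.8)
The plan is to run the direct method on the dual problem \eqref{Min1}, splitting $g=g^+-g^-$ and exploiting that the ``good'' part $g^+\in\F(\Om)$ contributes a \emph{compact} functional (Theorem \ref{allinone}), while the ``bad'' part $g^-\ge 0$ contributes a functional that is weakly lower semicontinuous by Fatou's lemma. I work in the nondegenerate case $g^+\not\equiv 0$, so that $B_g>0$ and the constraint set $G^{-1}\{1\}$ is nonempty; note also that $B_g<\infty$, since $g\le g^+$ and $g^+\in\F(\Om)\subseteq\H$ give $\int_\Om g|u|^p\,dx\le\int_\Om g^+|u|^p\,dx\le C\int_\Om|\nabla u|^p\,dx$ by Theorem \ref{Mazya's condition}.

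First I would pick a minimizing sequence $(u_n)$ for \eqref{Min1}, that is, $\int_\Om g|u_n|^p\,dx=1$ and $\int_\Om|\nabla u_n|^p\,dx\to 1/B_g$. Since $(u_n)$ is bounded in $\Dp$, after passing to a subsequence I may assume $u_n\wra u$ in $\Dp$, with $u_n\to u$ a.e. and in $L^p_{loc}(\Om)$ (Corollary \ref{cpctembedding}). Because $g^+\in\F(\Om)$, Theorem \ref{allinone} shows that $v\mapsto\int_\Om g^+|v|^p\,dx$ is compact, whence $\int_\Om g^+|u_n|^p\,dx\to\int_\Om g^+|u|^p\,dx$. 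On the other hand $g^-\ge 0$ and $u_n\to u$ a.e., so Fatou's lemma gives $\int_\Om g^-|u|^p\,dx\le\liminf_{n}\int_\Om g^-|u_n|^p\,dx$.

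The crucial step is to deduce $\int_\Om g|u|^p\,dx\ge 1$. From the constraint, $\int_\Om g^-|u_n|^p\,dx=\int_\Om g^+|u_n|^p\,dx-1$, and the right-hand side converges to $\int_\Om g^+|u|^p\,dx-1$; hence $\int_\Om g^-|u_n|^p\,dx$ converges to the same limit, and by Fatou $\int_\Om g^-|u|^p\,dx\le\int_\Om g^+|u|^p\,dx-1$. Rearranging, $\int_\Om g|u|^p\,dx=\int_\Om g^+|u|^p\,dx-\int_\Om g^-|u|^p\,dx\ge 1$; in particular $u\ne 0$.

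Finally, I would upgrade this to an equality by scaling. Weak lower semicontinuity of the norm gives $\int_\Om|\nabla u|^p\,dx\le\liminf_n\int_\Om|\nabla u_n|^p\,dx=1/B_g$. If $c:=\int_\Om g|u|^p\,dx>1$, then $v:=c^{-1/p}u$ lies in $G^{-1}\{1\}$ and satisfies $\int_\Om|\nabla v|^p\,dx=c^{-1}\int_\Om|\nabla u|^p\,dx<1/B_g$, contradicting the definition of $1/B_g$ as the infimum over $G^{-1}\{1\}$. Therefore $\int_\Om g|u|^p\,dx=1$, so $u\in G^{-1}\{1\}$ and $\int_\Om|\nabla u|^p\,dx=1/B_g$; that is, $B_g$ is attained at $u$. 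The only delicate point is the reverse inequality $\int_\Om g|u|^p\,dx\ge 1$ (the bound $\le 1$ would be automatic from weak semicontinuity): it is exactly here that the hypothesis is used, the sign of $g^-$ providing semicontinuity in the favorable direction and $g^+\in\F(\Om)$ providing genuine continuity of the remaining term.
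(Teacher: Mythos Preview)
Your proof is correct and follows essentially the same route as the paper: pick a minimizing sequence, use compactness of $v\mapsto\int_\Om g^+|v|^p\,dx$ (from $g^+\in\F(\Om)$) together with Fatou on the $g^-$ part to get $\int_\Om g|u|^p\,dx\ge 1$, then rescale. The only cosmetic difference is that the paper stops at the rescaled function $\tilde u=u/[\int_\Om g|u|^p\,dx]^{1/p}$, whereas you additionally argue by contradiction that the scaling factor must equal $1$.
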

\begin{proof}
 Let $(u_n)$ be a sequence that minimizes $\displaystyle \int_{\Om} |\nabla u|^p \ dx$ over $G^{-1} \{1\}$. Then $(u_n)$ is bounded in $\Dp$ and hence up to a subsequence $u_n \wra u$ in $\Dp$ and $u_n \ra u$ a.e. in $\Om$. Since $g^+ \in \F(\Om)$, $\displaystyle \lim_{n \ra \infty} \int_{\Om} g^+ |u_n|^p \ dx = \int_{\Om} g^+ |u|^p \ dx$ (by Theorem \ref{eqivthm}). Further, $\displaystyle \int_{\Om} g^- |u_n|^p \ dx =  \int_{\Om} g^+ |u_n|^p \ dx -1. $ 
 Now Fatous lemma gives $\displaystyle  \int_{\Om} g^- |u|^p\ dx \leq  \int_{\Om} g^+ |u|^p\ dx -1 $. Thus $1 \leq \int_{\Om} g |u|^p\ dx$ and hence $\tilde{u} := \frac{u}{[\int_{\Om} g |u|^p\ dx]^{1/p}}$ is a required element in $\Dp$ for which the best constant
 is attained.
\end{proof} 
\begin{rmk} \rm
 The above proposition gives an alternate way to produce examples of weight function $g$ for which  the best constant $B_g$ is attained without $G$ being compact. For example, take $g$ in $L^1_{loc}(\Om)$ with $g^+\in \F(\Om)$ and $g^-\notin \F(\Om).$
\end{rmk}

\section{Acknowledgement} T. V. Anoop  would like to  thank the Department of Science \& Technology, India for the research grant DST/INSPIRE/04/2014/001865.
\bibliographystyle{abbrv}
\bibliography{ref}

\begin{thebibliography}{10}

\bibitem{Adimurthy_Mythily}
Adimurthi, N.~Chaudhuri, and M.~Ramaswamy.
\newblock An improved {H}ardy-{S}obolev inequality and its application.
\newblock {\em Proc. Amer. Math. Soc.}, 130(2):489--505 (electronic), 2002.

\bibitem{Allegretto}
W.~Allegretto.
\newblock Principal eigenvalues for indefinite-weight elliptic problems in
  $\mathbb{R}^n$.
\newblock {\em Proc. Amer. Math. Soc.}, 116(3):701--706, 1992.

\bibitem{Alvino}
A.~Alvino.
\newblock Sulla diseguaglianza di {S}obolev in spazi di {L}orentz.
\newblock {\em Boll. Un. Mat. Ital. A (5)}, 14(1):148--156, 1977.

\bibitem{anoop-p}
T.~V. Anoop.
\newblock Weighted eigenvalue problems for the {$p$}-{L}aplacian with weights
  in weak {L}ebesgue spaces.
\newblock {\em Electron. J. Differential Equations}, pages No. 64, 22, 2011.

\bibitem{biharmonic}
T.~V. Anoop, U.~Das, and A.~Sarkar.
\newblock On the generalized {H}ardy-{R}ellich inequalities.
\newblock {\em Proc. Roy. Soc. Edinburgh Sect. A}, 150(2):897--919, 2020.

\bibitem{ADS-exterior}
T.~V. Anoop, P.~Dr\'abek, and S.~Sasi.
\newblock Weighted quasilinear eigenvalue problems in exterior domains.
\newblock {\em Calc. Var. Partial Differential Equations}, 53(3-4):961--975,
  2015.

\bibitem{AMM}
T.~V. Anoop, M.~Lucia, and M.~Ramaswamy.
\newblock Eigenvalue problems with weights in {L}orentz spaces.
\newblock {\em Calc. Var. Partial Differential Equations}, 36(3):355--376,
  2009.

\bibitem{Tarantello}
M.~Badiale and G.~Tarantello.
\newblock A {S}obolev-{H}ardy inequality with applications to a nonlinear
  elliptic equation arising in astrophysics.
\newblock {\em Arch. Ration. Mech. Anal.}, 163(4):259--293, 2002.

\bibitem{Bennett}
C.~Bennett and R.~Sharpley.
\newblock {\em Interpolation of operators}, volume 129 of {\em Pure and Applied
  Mathematics}.
\newblock Academic Press, Inc., Boston, MA, 1988.

\bibitem{Bertin}
G.~Bertin.
\newblock {\em Dynamics of galaxies}.
\newblock Cambridge University Press, Cambridge, 2000.

\bibitem{Brezis-Lieb}
H.~Br\'{e}zis and E.~a. Lieb.
\newblock A relation between pointwise convergence of functions and convergence
  of functionals.
\newblock {\em Proc. Amer. Math. Soc.}, 88(3):486--490, 1983.

\bibitem{MR1605678}
H.~Brezis and J.~L. V\'azquez.
\newblock Blow-up solutions of some nonlinear elliptic problems.
\newblock {\em Rev. Mat. Univ. Complut. Madrid}, 10(2):443--469, 1997.

\bibitem{Ciotti}
L.~Ciotti.
\newblock Dynamical models in astrophysics.
\newblock {\em Lecture Notes, Scuola Normale Superiore, Pisa}, 2001.

\bibitem{EdEv}
D.~E. Edmunds and W.~D. Evans.
\newblock {\em Hardy operators, function spaces and embeddings}.
\newblock Springer Monographs in Mathematics. Springer-Verlag, Berlin, 2004.

\bibitem{Evans}
L.~C. Evans and R.~F. Gariepy.
\newblock {\em Measure theory and fine properties of functions}.
\newblock Textbooks in Mathematics. CRC Press, Boca Raton, FL, revised edition,
  2015.

\bibitem{Federer}
H.~Federer.
\newblock {\em Geometric measure theory}.
\newblock Die Grundlehren der mathematischen Wissenschaften, Band 153.
  Springer-Verlag New York Inc., New York, 1969.

\bibitem{Filippas}
S.~Filippas and A.~Tertikas.
\newblock Optimizing improved {H}ardy inequalities.
\newblock {\em J. Funct. Anal.}, 192(1):186--233, 2002.

\bibitem{Lang}
J.~Lang and A.~Nekvinda.
\newblock A difference between continuous and absolutely continuous norms in
  {B}anach function spaces.
\newblock {\em Czechoslovak Math. J.}, 47(122)(2):221--232, 1997.

\bibitem{Lieb}
E.~H. Lieb and M.~Loss.
\newblock {\em Analysis}, volume~14 of {\em Graduate Studies in Mathematics}.
\newblock American Mathematical Society, Providence, RI, second edition, 2001.

\bibitem{Lions1a}
P.~L. Lions.
\newblock The concentration-compactness principle in the calculus of
  variations: {T}he locally compact cases {I \& II}.
\newblock {\em Ann. Inst. H. Poincar\'{e} Anal. Non Lin\'{e}aire},
  1(2):109--145, 223--283, 1984.

\bibitem{Lions2a}
P.~L. Lions.
\newblock The concentration-compactness principle in the calculus of
  variations: {T}he limit cases {I \& II}.
\newblock {\em Rev. Mat. Iberoamericana}, 1(1):45--121, 145--201, 1985.

\bibitem{Lorentz}
G.~G. Lorentz.
\newblock Some new functional spaces.
\newblock {\em Ann. of Math. (2)}, 51:37--55, 1950.

\bibitem{Mancini}
G.~Mancini, I.~Fabbri, and K.~Sandeep.
\newblock Classification of solutions of a critical {H}ardy-{S}obolev operator.
\newblock {\em J. Differential Equations}, 224(2):258--276, 2006.

\bibitem{Manes-Micheletti}
A.~Manes and A.~M. Micheletti.
\newblock Un'estensione della teoria variazionale classica degli autovalori per
  operatori ellittici del secondo ordine.
\newblock {\em Boll. Un. Mat. Ital. (4)}, 7:285--301, 1973.

\bibitem{Mazya}
V.~G. Maz'ja.
\newblock {\em Sobolev spaces}.
\newblock Springer Series in Soviet Mathematics. Springer-Verlag, Berlin, 1985.
\newblock Translated from the Russian by T. O. Shaposhnikova.

\bibitem{Fillip}
F.~Rindler.
\newblock {\em Calculus of variations}.
\newblock Universitext. Springer, Cham, 2018.

\bibitem{Smets}
D.~Smets.
\newblock A concentration-compactness lemma with applications to singular
  eigenvalue problems.
\newblock {\em J. Funct. Anal.}, 167(2):463--480, 1999.

\bibitem{Andrze}
A.~Szulkin and M.~Willem.
\newblock Eigenvalue problems with indefinite weight.
\newblock {\em Studia Math.}, 135(2):191--201, 1999.

\bibitem{Tertikas}
A.~Tertikas.
\newblock Critical phenomena in linear elliptic problems.
\newblock {\em J. Funct. Anal.}, 154(1):42--66, 1998.

\bibitem{Visciglia}
N.~Visciglia.
\newblock A note about the generalized {H}ardy-{S}obolev inequality with
  potential in {$L^{p,d}(\R^n)$}.
\newblock {\em Calc. Var. Partial Differential Equations}, 24(2):167--184,
  2005.

\bibitem{Zaanen}
A.~C. Zaanen.
\newblock {\em An introduction to the theory of integration}.
\newblock North-Holland Publishing Company, Amsterdam, 1958.

\end{thebibliography}

 \vspace{0.8cm}
\noi {\bf  T. V.  Anoop } \\  Department of Mathematics,\\   Indian Institute of Technology Madras, \\ Chennai, 600036, India. \\ 
{\it Email}:{ anoop@iitm.ac.in} \\

		\noi {\bf Ujjal Das } \\  The Institute of Mathematical Sciences, HBNI \\ Chennai, 600113, India. \\ 
{\it Email}:{ujjaldas@imsc.res.in, ujjal.rupam.das@gmail.com}

\end{document}